\journalname{Machine Learning}
\newcommand{\invisible}[1]{{}}
\renewcommand{\dfrac}[2]{\frac{\displaystyle #1}{\displaystyle  #2}}
\renewcommand{\a}{\mathbf{a}}
\newcommand{\x}{\mathbf{x}}
\newcommand{\y}{\mathbf{y}}
\newcommand{\e}{\mathbf{e}}
\newcommand{\db}{\mathbf{b}}
\newcommand{\w}{\mathbf{w}}
\newcommand{\z}{\mathbf{z}}
\newcommand{\bx}{\mathbf{X}}
\newcommand{\by}{\mathbf{Y}}
\newcommand{\bz}{\mathbf{Z}}
\newcommand{\be}{\mathbf{E}}
\newcommand{\bl}{\mathbf{L}}
\newcommand{\bs}{\mathbf{S}}
\renewcommand{\u}{\mathbf{u}}
\renewcommand{\v}{\mathbf{v}}
\newcommand{\p}{\mathbf{p}}
\renewcommand{\b}{\mathbf{b}}
\newcommand{\bflambda}{\mathbf{\lambda}}
\newcommand{\A}{\mathcal{A}}
\newcommand{\B}{\mathcal{B}}
\newcommand{\X}{\mathbf{X}}
\newcommand{\sgn}{\mbox{sgn}}
\newcommand{\bzero}{\mathbf{0}}
\newcommand{\prox}{\mbox{prox}}
\newcommand{\<}{\left\langle}
\renewcommand{\>}{\right\rangle}
\newcommand{\lbar}{\left\|}
\newcommand{\rbar}{\right\|}
\DeclareMathOperator*{\argmin}{argmin}
\begin{document}

\title{Linearized Alternating Direction Method with Parallel Splitting and Adaptive Penalty
for Separable Convex Programs in Machine Learning\thanks{This
paper is an extension of our prior work
\cite{Lin-2011-LADMAP} and \cite{Liu-2013-LADMPSAP}.}
}

\titlerunning{LADMPSAP for Separable Convex Programs}        

\author{Zhouchen Lin        \and
        Risheng Liu  \and
        Huan Li 
}


\institute{Z. Lin \at
              Key Lab. of Machine Perception (MOE), School of EECS, Peking
              University.
              \email{zlin@pku.edu.cn}           
           \and
           R. Liu (Corresponding author)\at
              Faculty of Electronic Information and Electrical Engineering, Dalian University of Technology,\\
              School of Software Technology, Dalian University of
              Technology.
              \email{rsliu@dlut.edu.cn}
           \and
           H. Li \at
           School of Software and Microelectronics, Peking
           University.
           \email{lihuan\_ss@pku.edu.cn}
}

\date{Received: date / Accepted: date}

\maketitle

\begin{abstract}
Many problems in machine learning and other fields can be
(re)for-mulated as linearly constrained separable convex programs.
In most of the cases, there are multiple blocks of variables.
However, the traditional alternating direction method (ADM) and
its linearized version (LADM, obtained by linearizing the
quadratic penalty term) are for the two-block case and cannot be
naively generalized to solve the multi-block case. So there is
great demand on extending the ADM based methods for the
multi-block case. In this paper, we propose LADM with parallel
splitting and adaptive penalty (LADMPSAP) to solve multi-block
separable convex programs efficiently. When all the component
objective functions have bounded subgradients, we obtain
convergence results that are stronger than those of ADM and LADM,
e.g., allowing the penalty parameter to be \emph{unbounded} and
proving the \emph{sufficient and necessary conditions} for global
convergence. We further propose a simple optimality measure and
reveal the convergence rate of LADMPSAP in an ergodic sense. For
programs with extra convex set constraints, with refined parameter
estimation we devise a practical version of LADMPSAP for faster
convergence. Finally, we generalize LADMPSAP to handle programs
with more difficult objective functions by linearizing part of the
objective function as well. LADMPSAP is particularly suitable for
sparse representation and low-rank recovery problems because its
subproblems have closed form solutions and the sparsity and
low-rankness of the iterates can be preserved during the
iteration. It is also highly parallelizable and hence fits for
parallel or distributed computing. Numerical experiments testify
to the advantages of LADMPSAP in speed and numerical accuracy.
\keywords{Convex Programs \and Alternating Direction Method \and
Linearized Alternating Direction Method \and Proximal Alternating
Direction Method \and Parallel Splitting \and Adaptive Penalty}
\end{abstract}

\section{Introduction}
In recent years, convex programs have become increasingly popular
for solving a wide range of problems in machine learning and other
fields, ranging from theoretical modeling, e.g., latent variable
graphical model selection \citep{Chand-2012-GMS}, low-rank feature
extraction (e.g., matrix decomposition \citep{Candes-2011-RPCA}
and matrix completion \citep{Candes-2009-matrix}), subspace
clustering \citep{Liu-2012-LRR}, and kernel discriminant analysis
\citep{Ye-2008-MDKL}, to real-world applications, e.g., face
recognition \citep{Wright-2009-face}, saliency detection
\citep{Wu-2012-saliency}, and video denoising
\citep{Ji-2010-VideoDenois}. Most of the problems can be
(re)formulated as the following linearly constrained separable
convex program\footnote{If the objective function is not separable
or there are extra convex set constraints, $\x_i\in X_i$,
$i=1,\cdots,n$, where $X_i$'s are convex sets, the program can be
transformed into (\ref{eq:model_problem_multivar}) by introducing
auxiliary variables, c.f.
(\ref{eq:model_problem_multivar_convex_sets})-(\ref{eq:redefine_equiv}).}:
\begin{equation}
\min\limits_{\x_1,\cdots,\x_n} \sum\limits_{i=1}^n f_i(\x_i),\quad
s.t.\quad
\sum\limits_{i=1}^n\A_i(\x_i)=\mathbf{b},\label{eq:model_problem_multivar}
\end{equation}
where $\x_i$ and $\mathbf{b}$ could be either vectors or
matrices\footnote{In this paper we call each $\x_i$ a ``block" of
variables because it may consist of multiple scalar variables. We
will use bold capital letters if a block is known to be a
matrix.}, $f_i$ is a closed proper convex function, and
$\A_i:\mathbb{R}^{d_i}\rightarrow \mathbb{R}^{m}$ is a linear
mapping. Without loss of generality, we may assume that none of
the $\A_i$'s is a zero mapping, the solution to
$\sum\limits_{i=1}^n \A_i(\x_i)=\mathbf{b}$ is non-unique, and the
mapping $\A(\x_1,\cdots,\x_n)\equiv\sum\limits_{i=1}^n\A_i(\x_i)$
is onto\footnote{The last two assumptions are equivalent to that
the matrix
$\mathbf{A}\equiv(\mathbf{A}_1\,\,\cdots\,\,\mathbf{A}_n)$ is not
full column rank but full row rank, where $\mathbf{A}_i$ is the
matrix representation of $\A_i$.}.

\subsection{Exemplar Problems in Machine Learning}

In this subsection, we present some examples of machine learning
problems that can be formulated as the model problem
\eqref{eq:model_problem_multivar}.

\subsubsection{Latent Low-Rank Representation}
Low-Rank Representation (LRR)~\citep{Liu-2010-LRR,Liu-2012-LRR} is
a recently proposed technique for robust subspace clustering and
has been applied to many machine learning and computer vision
problems. However, LRR works well only when the number of samples
is more than the dimension of the samples, which may not be
satisfied when the data dimension is high. So Liu et
al.~\citep{Liu-2011-LLRR} proposed latent LRR to overcome this
difficulty. The mathematical model of latent LRR is as follows:
\begin{equation}
\min\limits_{\bz,\bl,\be}\|\bz\|_* + \|\bl\|_* + \mu\|\be\|_{1},
\quad s.t. \quad \bx = \bx\bz + \bl\bx+ \be,\label{eq:llrr}
\end{equation}
where $\bx$ is the data matrix, each column being a sample vector,
$\|\cdot\|_*$ is the nuclear norm~\citep{Fazel-2002-nuclear},
i.e., the sum of singular values, and $\|\cdot\|_1$ is the
$\ell_1$ norm~\citep{Candes-2011-RPCA}, i.e., the sum of absolute
values of all entries. Latent LRR is to decompose data into
principal feature $\bx\bz$ and salient feature $\bl\bx$, up to
sparse noise $\be$.

\subsubsection{Nonnegative Matrix Completion}
Nonnegative matrix completion (NMC)~\citep{Xu-2011-NMF} is a novel
technique for dimensionality reduction, text mining, collaborative
filtering, and clustering, etc. It can be formulated as:
\begin{equation}
\min\limits_{\bx,\e}\|\bx\|_* + \frac{1}{2\mu}\|\e\|^2,\quad s.t.
\quad \mathbf{b} = \mathcal{P}_{\Omega}(\bx) + \e, \ \bx \geq
0,\label{eq:nmc}
\end{equation}
where $\b$ is the observed data in the matrix $\bx$ contaminated
by noise $\e$, $\Omega$ is an index set, $\mathcal{P}_{\Omega}$ is
a linear mapping that selects those elements whose indices are in
$\Omega$, and $\|\cdot\|$ is the Frobenius norm. NMC is to recover
the nonnegative low-rank matrix $\bx$ from the observed noisy data
$\b$.

To see that the NMC problem can be reformulated as
(\ref{eq:model_problem_multivar}), we introduce an auxiliary
variable $\by$ and rewrite \eqref{eq:nmc} as
\begin{equation}
\min\limits_{\bx,\by,\e}\|\bx\|_* + \chi_{\geq 0}(\by) +
\frac{1}{2\mu}\|\e\|^2,\quad s.t. \quad
\begin{pmatrix}
\mathcal{P}_{\Omega}(\bx)\\
\bx
\end{pmatrix}
-
\begin{pmatrix}
\bzero\\
\by
\end{pmatrix}
+
\begin{pmatrix}
\e\\
\bzero
\end{pmatrix}
 =\begin{pmatrix}
\mathbf{b}\\
\bzero
\end{pmatrix},\label{eq:nmc-equiv}
\end{equation}
where $\chi_{\geq 0}(\by)=\left\{
\begin{array}{ll}
0, & \mbox{if } \by\geq 0,\\
+\infty, & \mbox{otherwise},
\end{array}
\right.$ is the characteristic function of the set of nonegative
matrices.

\subsubsection{Group Sparse Logistic Regression with
Overlap} 

Besides unsupervised learning models shown above, many supervised
machine learning problems can also be written in the form of
\eqref{eq:model_problem_multivar}. For example, using logistic
function as the loss function in the group LASSO with
overlap~\citep{Jacob-2009-Group-LASSO,Deng-2011-ADM}, one obtains
the following model:
\begin{equation}
\min\limits_{\w,b}\frac{1}{s}\sum\limits_{i=1}^{s}\log\left(1+\exp\left(-y_i(\w^T\x_i+b)\right)\right)+\mu\sum\limits_{j=1}^t\|\bs_j\w\|,\label{eq:logit}
\end{equation}
where $\x_i$ and $y_i$, $i=1,\cdots,s$, are the training data and
labels, respectively, and $\w$ and $b$ parameterize the linear
classifier. $\bs_j$, $j=1,\cdots,t$, are the selection matrices,
with only one 1 at each row and the rest entries are all zeros.
The groups of entries, $\bs_j\w$, $j=1,\cdots,t$, may overlap each
other. This model can also be considered as an extension of the
group sparse logistic regression
problem~\citep{Meier-2008-Group-Logistic} to the case of
overlapped groups.

Introducing $\bar{\w}=(\w^T,b)^T$, $\bar{\x}_i=(\x_i^T,1)^T$,
$\z=(\z_1^T,\z_2^T,\cdots,\z_t^T)^T$, and
$\bar{\bs}=(\bs,\mathbf{0})$, where
$\bs=(\bs_1^T,\cdots,\bs_t^T)^T$, \eqref{eq:logit} can be
rewritten as
\begin{equation}
\min\limits_{\bar{\w},\z}\frac{1}{s}\sum\limits_{i=1}^{s}\log\left(1+\exp\left(-y_i(\bar{\w}^T\bar{\x}_i)\right)\right)+\mu\sum\limits_{j=1}^t\|\z_j\|,
\quad s.t. \quad \z = \bar{\bs}\bar{\w},\label{eq:logit'}
\end{equation}
which is a special case of \eqref{eq:model_problem_multivar}.

\subsection{Related Work}

Although general theories on convex programs are fairly complete
nowadays, e.g., most of them can be solved by the interior point
method~\citep{Boyd-Convex-Optimization}, when faced with large
scale problems, which are typical in machine learning, the general
theory may not lead to efficient algorithms. For example, when
using CVX\footnote{Available at
\url{http://stanford.edu/~boyd/cvx}}, an interior point based
toolbox, to solve nuclear norm minimization problems (i.e., one of
the $f_i$'s is the nuclear norm of a matrix, e.g., \eqref{eq:llrr}
and \eqref{eq:nmc}), such as matrix
completion~\citep{Candes-2009-matrix}, robust principal component
analysis~\citep{Candes-2011-RPCA}, and low-rank
representation~\citep{Liu-2010-LRR,Liu-2012-LRR}, the complexity
of each iteration is $O(q^6)$, where $q\times q$ is the matrix
size. Such a complexity is unbearable for large scale computing.

To address the scalability issue, first order methods are often
preferred. The accelerated proximal gradient (APG)
algorithm~\citep{Beck2009,Toh-2009-APG} is popular due to its
guaranteed $O(K^{-2})$ convergence rate, where $K$ is the
iteration number. However, APG is basically for unconstrained
optimization. For constrained optimization, the constraints have
to be added to the objective function as penalties, resulting in
approximated solutions only. The alternating direction method
(ADM)\footnote{Also called the alternating direction method of
multipliers (ADMM) in some literatures, e.g.,
\citep{Boyd-2011-Distributed,zhang2011unified,deng2012global}.
}~\citep{Fortin-1983-ADM,Boyd-2011-Distributed,Lin09} has regained
a lot of attention recently and is also widely used. It is
especially suitable for separable convex programs like
(\ref{eq:model_problem_multivar}) because it fully utilizes the
separable structure of the objective function. Unlike APG, ADM can
solve (\ref{eq:model_problem_multivar}) exactly. Another first
order method is the split Bregman method
\citep{Goldstein-2009-SB,zhang2011unified}, which is closely
related to ADM~\citep{Esser-2009-SB} and is influential in image
processing.

An important reason that first order methods are popular for
solving large scale convex programs in machine learning is that
the convex functions $f_i$'s are often matrix or vector norms or
characteristic functions of convex sets, which enables the
following subproblems (called the proximal operation of
$f_i$~\citep{Rockafellar})
\begin{equation}
\prox_{f_i,\sigma}(\w)=\argmin\limits_{\x_i}
f_i(\x_i)+\dfrac{\sigma}{2}\|\x_i-\mathbf{w}\|^2 \label{eq:proxy}
\end{equation}
to have closed form solutions. For example, when $f_i$ is the
$\ell_1$ norm,
$\prox_{f_i,\sigma}(\w)=\mathcal{T}_{\sigma^{-1}}(\mathbf{w})$,
where
$\mathcal{T}_{\varepsilon}(x)=\sgn(x)\max(|x|-\varepsilon,0)$ is
the soft-thresholding operator~\citep{Goldstein-2009-SB}; when
$f_i$ is the nuclear norm, the optimal solution is:
$\prox_{f_i,\sigma}(\mathbf{W})=\mathbf{U}\mathcal{T}_{\sigma^{-1}}(\mathbf{\Sigma})\mathbf{V}^T$,
where $\mathbf{U}\mathbf{\Sigma}\mathbf{V}^T$ is the singular
value decomposition (SVD) of $\mathbf{W}$~\citep{Cai-2008-SVT};
and when $f_i$ is the characteristic function of the nonnegative
cone, the optimal solution is
$\prox_{f_i,\sigma}(\w)=\max(\mathbf{w},0)$. Since subproblems
like (\ref{eq:proxy}) have to be solved in each iteration when
using first order methods to solve separable convex programs, that
they have closed form solutions greatly facilitates the
optimization.

However, when applying ADM to solve
(\ref{eq:model_problem_multivar}) with non-unitary linear mappings
(i.e., $\A_i^{\dag}\A_i$ is not the identity mapping, where
$\A_i^{\dag}$ is the adjoint operator of $\A_i$), the resulting
subproblems may not have closed form solutions\footnote{Because
$\|\x_i-\mathbf{w}\|^2$ in (\ref{eq:proxy}) becomes
$\|\A_i(\x_i)-\mathbf{w}\|^2$, which cannot be reduced to
$\|\x_i-\tilde{\mathbf{w}}\|^2$.}, hence need to be solved
iteratively, making the optimization process awkward. Some work
\citep{Yang-2011-LADM,Lin-2011-LADMAP} has considered this issue
by linearizing the quadratic term $\|\A_i(\x_i)-\mathbf{w}\|^2$ in
the subproblems, hence such a variant of ADM is called the
linearized ADM (LADM). \citet{deng2012global} further propose the
generalized ADM that makes both ADM and LADM as its special cases
and prove its globally linear convergence by imposing strong
convexity on the objective function or full-rankness on some
linear operators.

Nonetheless, most of the existing theories on ADM and LADM are for
the \emph{two-block} case, i.e., $n=2$ in
(\ref{eq:model_problem_multivar})~\citep{Fortin-1983-ADM,Boyd-2011-Distributed,Lin-2011-LADMAP,deng2012global}.
The number of blocks is restricted to two because the proofs of
convergence for the two-block case are not applicable for the
multi-block case, i.e., $n>2$ in
(\ref{eq:model_problem_multivar}). Actually, a naive
generalization of ADM or LADM to the multi-block case may diverge
(see (\ref{eq:parallel_BP}) and \citep{Chen-2013-Divergent}).
Unfortunately, in practice multi-block convex programs often
occur, e.g., robust principal component analysis with dense
noise~\citep{Candes-2011-RPCA}, latent low-rank
representation~\citep{Liu-2011-LLRR} (see \eqref{eq:llrr}), and
when there are extra convex set constraints (see (\ref{eq:nmc})
and
(\ref{eq:model_problem_multivar_convex_sets})-(\ref{eq:model_problem_multivar_equiv})).
So it is desirable to design practical algorithms for the
multi-block case.

Recently \citet{He-11-LADM_Gauss} and \citet{Tao-14-ADM_Parallel}
considered the multi-block LADM and ADM, respectively. To
safeguard convergence, \citet{He-11-LADM_Gauss} proposed LADM with
Gaussian back substitution (LADMGB), which destroys the sparsity
or low-rankness of the iterates during iterations when dealing
with sparse representation and low-rank recovery problems, while
\citet{Tao-14-ADM_Parallel} proposed ADM with parallel splitting,
whose subproblems may not be easily solvable. Moreover, they all
developed their theories with the penalty parameter being fixed,
resulting in difficulty of tuning an optimal penalty parameter
that fits for different data and data sizes. This has been
identified as an important issue~\citep{deng2012global}.

\invisible{ The work in \citet{deng2012global} extended ADM for
the two variable case of our model and proved global linear
convergence under the assumptions that one of the two functions
are strict convex and has Lipschitz gradient, along with certain
rank constraints on the operators in the linear system. They also
mentioned that by introducing another new variable, the liner
convergence results can also be extended to multiple variable case
with the same strict convexity and Lipschitz gradient assumptions
on $f_i$. It should be noticed that the strict convexity,
Lipschitz gradient and rank constraints on operators $\A_i$ cannot
always be guaranteed in real problems, thus their results is not
suitable for our general convex program
(\ref{eq:model_problem_multivar}). }

\subsection{Contributions and Differences from Prior Work}

To propose an algorithm that is more suitable for convex programs
in machine learning, in this paper we aim at combining the
advantages of \citep{He-11-LADM_Gauss},
\citep{Tao-14-ADM_Parallel}, and \citep{Lin-2011-LADMAP}, i.e.,
combining LADM, parallel splitting, and adaptive penalty. Hence we
call our method LADM with parallel splitting and adaptive penalty
(LADMPSAP). With LADM, the subproblems will have forms like
(\ref{eq:proxy}) and hence can be easily solved. With parallel
splitting, the sparsity and low-rankness of iterates can be
preserved during iterations when dealing with sparse
representation and low-rank recovery problems, saving both the
storage and the computation load. With adaptive penalty, the
convergence can be faster and it is unnecessary to tune an optimal
penalty parameter. Parallel splitting also makes the algorithm
highly parallelizable, making LADMPSAP suitable for parallel or
distributed computing, which is important for large scale machine
learning. When all the component objective functions have bounded
subgradients, we prove convergence results that are stronger than
the existing theories on ADM and LADM. For example, the penalty
parameter can be \emph{unbounded} and the \emph{sufficient and
necessary} conditions of the global convergence of LADMPSAP can be
obtained as well. We also propose a simple optimality measure and
prove the convergence rate of LADMPSAP in an ergodic sense under
this measure. Our proof is simpler than those in
\citep{He-2012-Rate} and ~\citep{Tao-14-ADM_Parallel} which relied
on a complex optimality measure. When a convex program has extra
convex set constraints, we further devise a practical version of
LADMPSAP that converges faster thanks to better parameter
analysis. Finally, we generalize LADMPSAP to cope with more
difficult $f_i$'s, whose proximal operation \eqref{eq:proxy} is
not easily solvable, by further linearizing the smooth components
of $f_i$'s. Experiments testify to the advantage of LADMPSAP in
speed and numerical accuracy.

Note that \citet{Goldfarb-10-fast} also proposed a multiple
splitting algorithm for convex optimization. However, they only
considered a special case of our model problem
(\ref{eq:model_problem_multivar}), i.e., all the linear mappings
$\A_i$'s are identity mappings\footnote{The multi-block problems
introduced in \citep{Boyd-2011-Distributed} also fall within this
category.}. With their simpler model problem, linearization is
unnecessary and a faster convergence rate, $O(K^{-2})$, can be
achieved. In contrast, in this paper we aim at proposing a
practical algorithm for efficiently solving more general problems
like (\ref{eq:model_problem_multivar}).

We also note that \citet{Luo-2012-LinearADM} used the same
linearization technique for the smooth components of $f_i$'s as
well, but they only considered a special class of $f_i$'s. Namely,
the non-smooth component of $f_i$ is a sum of $\ell_1$ and
$\ell_2$ norms or its epigraph is polyhedral. Moreover, for
parallel splitting (Jacobi update) \citet{Luo-2012-LinearADM} has
to incorporate a postprocessing to guarantee convergence, by
interpolating between an intermediate iterate and the previous
iterate. Third, \citet{Luo-2012-LinearADM} still focused on a
fixed penalty parameter. Again, our method can handle more general
$f_i$'s, does not require postprocessing, and allows for an
adaptive penalty parameter.

A more general splitting/linearization technique can be founded in
\citep{zhang2011unified}. However, the authors only proved that
any accumulation point of the iteration is a Kuhn-Karush-Tucker
(KKT) point and did not investigate the convergence rate. There
was no evidence that the iteration could converge to a unique
point. Moreover, the authors only studied the case of fixed
penalty parameter.

Although dual ascent with dual
decomposition~\citep{Boyd-2011-Distributed} can also solve
(\ref{eq:model_problem_multivar}) in a parallel way, it may break
down when some $f_i$'s are not strictly
convex~\citep{Boyd-2011-Distributed}, which typically happens in
sparse or low-rank recovery problems where $\ell_1$ norm or
nuclear norm are used. Even if it works, since $f_i$ is not
strictly convex, dual ascent becomes dual \emph{subgradient}
ascent~\citep{Boyd-2011-Distributed}, which is known to converge
at a rate of $O(K^{-1/2})$ -- slower than our $O(K^{-1})$ rate.
Moreover, dual ascent requires choosing a good step size for each
iteration, which is less convenient than ADM based methods.

\subsection{Organization}
The remainder of this paper is organized as follows. We first
review LADM with adaptive penalty (LADMAP) for the two-block case
in Section~\ref{sec:LADMAP-2var}. Then we present LADMPSAP for the
multi-block case in Section~\ref{sec:LADMPSAP}. Next, we propose a
practical version of LADMPSAP for separable convex programs with
convex set constraints in Section~\ref{sec:PPLAMDAP}. We further
extend LADMPSAP to proximal LADMPSAP for programs with more
difficult objective functions in Section~\ref{sec:G-LADMPSAP}. We
compare the advantage of LADMPSAP in speed and numerical accuracy
with other first order methods in Section~\ref{sec:exp}. Finally,
we conclude the paper in Section~\ref{sec:con}.

\section{Review of LADMAP for the Two-Block Case}\label{sec:LADMAP-2var}

We first review LADMAP~\citep{Lin-2011-LADMAP} for the two-block
case of (\ref{eq:model_problem_multivar}). It consists of four
steps:
\begin{enumerate}
\item Update $\x_1$:
\begin{equation}
\x_1^{k+1}=\argmin\limits_{\x_1} f_1(\x_1)
+\dfrac{\sigma_1^{(k)}}{2}\lbar\x_1-\x_1^{k}+\A_1^{\dag}(\tilde{\bflambda}_1^{k})/\sigma_1^{(k)}\rbar^2,
\label{eq:update_x1}
\end{equation}
\item Update $\x_2$:
\begin{equation}
\x_2^{k+1}=\argmin\limits_{\x_2} f_2(\x_2)
+\dfrac{\sigma_2^{(k)}}{2}\lbar\x_2-\x_2^{k}+\A_2^{\dag}(\tilde{\bflambda}_2^{k})/\sigma_2^{(k)}\rbar^2,
\label{eq:update_x2}
\end{equation}
\item Update $\bflambda$:
\begin{equation}
\bflambda^{k+1} = \bflambda^k +
\beta_k\left(\sum\limits_{i=1}^2\A_i(\x_i^{k+1})-\mathbf{b}\right),
\label{eq:update_lambda}
\end{equation}
\item Update $\beta$:
\begin{equation}
\beta_{k+1} = \min(\beta_{\max},\rho\beta_k),
\label{eq:update_beta}
\end{equation}
\end{enumerate}
where $\lambda$ is the Lagrange multiplier, $\beta_k$ is the
penalty parameter, $\sigma_i^{(k)}=\eta_i\beta_k$ with $\eta_i
> \|\A_i\|^2$ ($\|\A_i\|$ is the operator norm of $\A_i$),
\begin{eqnarray}
\tilde{\bflambda}_1^{k}&=&\bflambda^k +
\beta_k(\A_1(\x_1^{k})+\A_2(\x_2^{k})-\mathbf{b}),\\
\tilde{\bflambda}_2^{k}&=&\bflambda^k +
\beta_k(\A_1(\x_1^{k+1})+\A_2(\x_2^{k})-\mathbf{b}),\label{eq:tilde_lambda12}
\end{eqnarray}
and $\rho$ is an adaptively updated parameter (see
\eqref{eq:update_rho}). Please refer to~\citep{Lin-2011-LADMAP}
for details. Note that the latest $\x_1^{k+1}$ is immediately used
to compute $\x_2^{k+1}$ (see~\eqref{eq:tilde_lambda12}). So $\x_1$
and $\x_2$ have to be updated alternately, hence the name
alternating direction method.

\invisible{
The iteration terminates when the following two conditions are
met:
\begin{equation}
\left\|\sum\limits_{i=1}^2\mathcal{A}_i(\mathbf{x}_i^{k+1}) -
\mathbf{b}\right\|/\|\mathbf{b}\| <
\varepsilon_1,\label{eq:stopping1}
\end{equation}
\begin{equation}
\max\left(\left\{\sqrt{\beta_{k}\sigma_i^{(k)}}\lbar\mathbf{x}_i^{k+1}-\mathbf{x}_i^k\rbar,i=1,2\right\}\right)/\|\mathbf{b}\|
< \varepsilon_2.\label{eq:stopping2}
\end{equation}}

\section{LADMPSAP for the Multi-Block Case}\label{sec:LADMPSAP}

In this section, we extend LADMAP for multi-block separable convex
programs (\ref{eq:model_problem_multivar}). We also provide the
\emph{sufficient and necessary conditions} for global convergence
when subgradients of the objective functions are all bounded. We
further prove the convergence rate in an ergodic sense.

\subsection{LADM with Parallel Splitting and Adaptive
Penalty}\label{sec:LADM+PSAP}

Contrary to our intuition, the multi-block case is actually
fundamentally different from the two-block one. For the
multi-block case, it is very natural to generalize LADMAP for the
two-block case in a straightforward way, with
\begin{equation}
\tilde{\bflambda}_i^{k}=\bflambda^k +
\beta_k\left(\sum\limits_{j=1}^{i-1}\A_j(\x_j^{k+1})
+\sum\limits_{j=i}^{n}\A_j(\x_j^{k})-\mathbf{b}\right),\ i=1,\cdots,n.\label{eq:tilde_lambda_i}
\end{equation}
Unfortunately, we were unable to prove the convergence of such a
naive LADMAP using the same proof for the two-block case. This is
because their Fej\'{e}r monotone inequalities (see
Remark~\ref{rem:different}) cannot be the same. That is why He et
al. has to introduce an extra Gaussian back
substitution~\citep{He-2011-Gaussian,He-11-LADM_Gauss} for
correcting the iterates. Actually, the above naive generalization
of LADMAP may be divergent (which is even worse than converging to
a wrong solution), e.g., when applied to the following problem:
\begin{equation}
\min\limits_{\x_1,\cdots,\x_n} \sum\limits_{i=1}^n
\|\x_i\|_1,\quad s.t.\quad \sum\limits_{i=1}^n
\mathbf{A}_i\x_i=\mathbf{b},\label{eq:parallel_BP}
\end{equation}
where $n\geq 5$ and $\mathbf{A}_i$ and $\b$ are Gaussian random
matrix and vector, respectively, whose entries fulfil the standard
Gaussian distribution independently. \citet{Chen-2013-Divergent}
also analyzed the naively generalized ADM for the multi-block case
and showed that even for three blocks the iteration could still be
divergent. They also provided sufficient conditions, which
basically require that the linear mappings $\A_i$ should be
orthogonal to each other ($\A_i^{\dag}\A_j=0$, $i\neq j$), to
ensure the convergence of naive ADM.

Fortunately, by modifying $\tilde{\bflambda}_i^k$ slightly we are
able to prove the convergence of the corresponding algorithm. More
specifically, our algorithm for solving
(\ref{eq:model_problem_multivar}) consists of the following steps:
\begin{enumerate}
\item Update $\x_i$'s in parallel:\\
\begin{equation}
\x_i^{k+1}=\argmin\limits_{\x_i} f_i(\x_i)
+\dfrac{\sigma_i^{(k)}}{2}\lbar\x_i-\x_i^{k}+\A_i^{\dag}(\hat{\bflambda}^{k})/\sigma_i^{(k)}\rbar^2,\quad
i=1,\cdots,n, \label{eq:update_xi}
\end{equation}
\item Update $\bflambda$:\\
\begin{equation}
\bflambda^{k+1} = \bflambda^k +
\beta_k\left(\sum\limits_{i=1}^n\A_i(\x_i^{k+1})-\b\right),
\label{eq:update_lambda_multivar}
\end{equation}
\item Update $\beta$:\\
\begin{equation}
\beta_{k+1} = \min(\beta_{\max},\rho\beta_k),
\label{eq:update_beta_multivar}
\end{equation}
\end{enumerate}
where $\sigma_i^{(k)}=\eta_i\beta_k$,
\begin{equation}
\hat{\bflambda}^{k}=\bflambda^k +
\beta_k\left(\sum\limits_{i=1}^n\A_i(\x_i^{k})-\b\right),\label{eq:hat_lambda}
\end{equation}
and
\begin{equation}
\rho = \left\{\begin{array}{ll} \rho_0, & \mbox{if} \
\beta_{k}\max\left(\left\{\sqrt{\eta_i}\lbar\mathbf{x}_i^{k+1}-\mathbf{x}_i^k\rbar,i=1,\cdots,n\right\}\right)/\lbar\mathbf{b}\rbar < \varepsilon_2,\\
1, & \mbox{otherwise},
\end{array}\right.\label{eq:update_rho}
\end{equation}
with $\rho_0 > 1$ being a constant and $0 < \varepsilon_2\ll 1$
being a threshold. Indeed, we replace $\tilde{\bflambda}_i^k$ with
$\hat{\bflambda}^k$ as (\ref{eq:hat_lambda}), which is independent
of $i$, and the rest procedures of the algorithm, including the
scheme \eqref{eq:update_beta_multivar} and \eqref{eq:update_rho}
to update the penalty parameter, are all inherited
from~\citep{Lin-2011-LADMAP}, except that $\eta_i$'s have to be
made larger (see Theorem~\ref{thm:converge_multivar}). As now
$\x_i$'s are updated in parallel and $\beta_k$ changes adaptively,
we call the new algorithm LADM with \emph{parallel splitting} and
\emph{adaptive penalty} (LADMPSAP).

\subsection{Stopping Criteria}

Some existing work (e.g., \citep{Liu-2010-LRR,Favaro-2011-Closed})
proposed stopping criteria out of intuition only, which may not
guarantee that the correct solution is approached. Recently,
\citet{Lin09} and \citet{Boyd-2011-Distributed} suggested that the
stopping criteria can be derived from the KKT conditions of a
problem. Here we also adopt such a strategy. Specifically, the
iteration terminates when the following two conditions are met:
\begin{equation}
\left\|\sum\limits_{i=1}^n\mathcal{A}_i(\mathbf{x}_i^{k+1}) -
\mathbf{b}\right\|/\|\mathbf{b}\| <
\varepsilon_1,\label{eq:stopping1}
\end{equation}
\begin{equation}
\beta_{k}\max\left(\left\{\sqrt{\eta_i}\lbar\mathbf{x}_i^{k+1}-\mathbf{x}_i^k\rbar,i=1,\cdots,n\right\}\right)/\|\mathbf{b}\|
< \varepsilon_2.\label{eq:stopping2}
\end{equation}
The first condition measures the feasibility error. The second
condition is derived by comparing the KKT conditions of problem
\eqref{eq:model_problem_multivar} and the optimality condition of
subproblem~\eqref{eq:LADMPSAP_update_xi}. The rules
(\ref{eq:update_beta_multivar}) and (\ref{eq:update_rho}) for
updating $\beta$ are actually hinted by the above stopping
criteria such that the two errors are well balanced.

For better reference, we summarize the proposed LADMPSAP algorithm
in Algorithm~\ref{alg:LADMPSAP-multivar}. For fast convergence, we
suggest that $\beta_0=\alpha m\varepsilon_2$ and $\alpha>0$ and
$\rho_0 > 1$ should be chosen such that $\beta_k$ increases
steadily along with iterations.

\begin{algorithm}[tb]
   \caption{LADMPSAP for Solving (\ref{eq:model_problem_multivar})}
   \label{alg:LADMPSAP-multivar}
\begin{algorithmic}
   \STATE {\bfseries Initialize:}
     Set $\rho_0 > 1$, $\varepsilon_1>0$, $\varepsilon_2 > 0$, $\beta_{\max} \gg 1 \gg \beta_0 > 0$, $\bflambda^0$,
   $\eta_i > n\|\mathcal{A}_i\|^2$, $\mathbf{x}_i^0$, $i=1,\cdots,n$.
   \WHILE {(\ref{eq:stopping1}) or (\ref{eq:stopping2}) is not satisfied}
   \STATE {\bfseries Step 1:} Compute $\hat{\bflambda}^k$ as (\ref{eq:hat_lambda}).
   \STATE {\bfseries Step 2:} Update $\mathbf{x}_i$'s in parallel by solving
   \begin{equation}
   \x_i^{k+1}=\argmin\limits_{\x_i} f_i(\x_i)+\dfrac{\eta_i\beta_k}{2}\left\|\x_i-\x_i^k+\A_i^{\dag}(\hat{\bflambda}^k)/(\eta_i\beta_k)\right\|^2,\ i=1,\cdots,n.\label{eq:LADMPSAP_update_xi}
   \end{equation}
   \STATE {\bfseries Step 3:} Update $\bflambda$ by (\ref{eq:update_lambda_multivar}) and $\beta$ by (\ref{eq:update_beta_multivar}) and (\ref{eq:update_rho}).
   \ENDWHILE
\end{algorithmic}
\end{algorithm}

\subsection{Global Convergence}
In the following, we always use
$(\x_1^*,\cdots,\x_n^*,\bflambda^*)$ to denote the KKT point of
problem (\ref{eq:model_problem_multivar}). For the global
convergence of LADMPSAP, we have the following theorem, where we
denote $\{\mathbf{x}_i^k\}=\{\mathbf{x}_1^k,\cdots,\x_n^k\}$ for
simplicity.
\begin{theorem}{\bf(Convergence of LADMPSAP)}\footnote{Please see Appendix for all the proofs of our theoretical results hereafter.}
If $\{\beta_k\}$ is non-decreasing and upper bounded, $\eta_i >
n\|\mathcal{A}_i\|^2$, $i=1,\cdots,n$, then
$\{(\{\mathbf{x}_i^k\},\bflambda^k)\}$ generated by LADMPSAP
converge to a KKT point of problem
(\ref{eq:model_problem_multivar}).\label{thm:converge_multivar}
\end{theorem}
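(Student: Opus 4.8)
The plan is to follow the classical ADM convergence template, adapted to the parallel-splitting setting, and to show that a suitable composite quantity is Fej\'er-monotone with respect to the KKT point. First I would write down the first-order optimality condition of each subproblem \eqref{eq:LADMPSAP_update_xi}: there exists $\mathbf{g}_i^{k+1}\in\partial f_i(\x_i^{k+1})$ with
\begin{equation*}
\mathbf{g}_i^{k+1} + \A_i^{\dag}(\hat{\bflambda}^k) + \eta_i\beta_k(\x_i^{k+1}-\x_i^k) = \bzero.
\end{equation*}
Using $\hat{\bflambda}^k = \bflambda^k + \beta_k(\sum_j \A_j(\x_j^k)-\b)$ and $\bflambda^{k+1}=\bflambda^k+\beta_k(\sum_j\A_j(\x_j^{k+1})-\b)$, I would rewrite this as $\mathbf{g}_i^{k+1} = -\A_i^{\dag}(\bflambda^{k+1}) + \A_i^{\dag}\!\big(\beta_k\sum_j\A_j(\x_j^{k+1}-\x_j^k)\big) - \eta_i\beta_k(\x_i^{k+1}-\x_i^k)$. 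On the other side, the KKT conditions give $-\A_i^{\dag}(\bflambda^*)\in\partial f_i(\x_i^*)$ and $\sum_j\A_j(\x_j^*)=\b$. The key inequality comes from monotonicity of the subdifferential, $\langle \mathbf{g}_i^{k+1}+\A_i^{\dag}(\bflambda^*),\, \x_i^{k+1}-\x_i^*\rangle \ge 0$, summed over $i$.

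The central step is to combine these $n$ inequalities with the multiplier update, rearranged as $\sum_j\A_j(\x_j^{k+1}-\x_j^*) = (\bflambda^{k+1}-\bflambda^k)/\beta_k$, and to massage the resulting cross terms into a telescoping form. The natural Lyapunov function here is
\begin{equation*}
\Phi_k \;=\; \frac{1}{\beta_k}\|\bflambda^k-\bflambda^*\|^2 \;+\; \sum_{i=1}^n \eta_i\|\x_i^k-\x_i^*\|^2 ,
\end{equation*}
and the goal is to show $\Phi_{k+1}\le \Phi_k - (\text{nonnegative residual})$. The residual should control $\sum_i\eta_i\|\x_i^{k+1}-\x_i^k\|^2$ and $\|\sum_j\A_j(\x_j^{k+1}-\x_j^k)\|^2$. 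Here is exactly where the condition $\eta_i>n\|\A_i\|^2$ enters and where the multi-block case differs from two blocks: after expanding the cross term $\langle \beta_k\sum_j\A_j(\x_j^{k+1}-\x_j^k),\, \sum_i\A_i(\x_i^{k+1}-\x_i^*)\rangle$, one encounters a quadratic form in the increments $\A_i(\x_i^{k+1}-\x_i^k)$, and bounding $\|\sum_i\A_i(\x_i^{k+1}-\x_i^k)\|^2 \le n\sum_i\|\A_i(\x_i^{k+1}-\x_i^k)\|^2 \le n\sum_i\|\A_i\|^2\|\x_i^{k+1}-\x_i^k\|^2$ via Cauchy--Schwarz is precisely what forces the factor $n$ in the step-size restriction, leaving a strictly positive margin $(\eta_i - n\|\A_i\|^2)$ per block. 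The adaptive but bounded, non-decreasing $\{\beta_k\}$ makes $1/\beta_k$ non-increasing, so the $\bflambda$-term behaves monotonically in the telescoping and $\Phi_k$ stays a genuine Lyapunov function despite the varying penalty.

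From the summable-residual bound I would then extract the standard consequences: $\{\Phi_k\}$ is bounded, hence $\{(\{\x_i^k\},\bflambda^k)\}$ is bounded; $\sum_k \sum_i\eta_i\|\x_i^{k+1}-\x_i^k\|^2<\infty$ and $\sum_k\|\sum_j\A_j(\x_j^{k+1}-\x_j^k)\|^2<\infty$, so $\x_i^{k+1}-\x_i^k\to\bzero$ and, via the $\bflambda$-update together with boundedness, the feasibility residual $\sum_j\A_j(\x_j^k)-\b\to\bzero$. Take any convergent subsequence $(\{\x_i^{k_l}\},\bflambda^{k_l})\to(\{\x_i^\infty\},\bflambda^\infty)$; passing to the limit in the subproblem optimality conditions (using closedness of $\partial f_i$ and that the increments vanish) shows $-\A_i^{\dag}(\bflambda^\infty)\in\partial f_i(\x_i^\infty)$ for all $i$, while the vanishing feasibility residual gives primal feasibility, so $(\{\x_i^\infty\},\bflambda^\infty)$ is itself a KKT point. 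Finally, running the Fej\'er argument with this particular KKT point in place of the generic $(\{\x_i^*\},\bflambda^*)$ shows $\Phi_k$ (defined relative to it) is non-increasing and has a subsequence tending to $0$, hence the whole sequence converges to it. The main obstacle I anticipate is purely algebraic: correctly grouping the cross terms in the expansion of $\langle \beta_k\sum_j\A_j(\x_j^{k+1}-\x_j^k),\,\bflambda^{k+1}-\bflambda^k\rangle$ and of the subdifferential-monotonicity sum so that everything telescopes and the leftover quadratic form is manifestly nonnegative under $\eta_i>n\|\A_i\|^2$; getting the constants and the handling of the changing $\beta_k$ right is the delicate part, everything after that is routine.
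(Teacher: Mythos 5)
Your plan follows the paper's proof essentially step for step: the subproblem optimality condition and monotonicity of $\partial f_i$ (Lemmas~\ref{lem:subgradients} and \ref{lem:weak-monotonicity}), the identity-plus-Cauchy--Schwarz bound $\bigl\|\sum_i\A_i(\x_i^{k+1}-\x_i^k)\bigr\|^2\le n\sum_i\|\A_i\|^2\|\x_i^{k+1}-\x_i^k\|^2$ that produces the margin $\eta_i-n\|\A_i\|^2$ (Proposition~\ref{prop:basic-identity-multivar}), Fej\'er monotonicity and summable residuals (Proposition~\ref{prop:converge_multivar}), then extraction of an accumulation point, verification that it is a KKT point via the subgradient inequality, and rerunning the monotonicity argument at that point to get whole-sequence convergence. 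So the route is the same, and the structure is sound.

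One quantitative detail would fail as written: your Lyapunov function $\Phi_k=\beta_k^{-1}\|\bflambda^k-\bflambda^*\|^2+\sum_i\eta_i\|\x_i^k-\x_i^*\|^2$ does not telescope when $\beta_k$ strictly increases. The one-step inequality you obtain from the optimality conditions carries weights $\eta_i\beta_k$ on the primal terms and $\beta_k^{-1}$ on the multiplier term, i.e.\ (up to the nonnegative residual)
\begin{equation*}
\beta_k\sum_{i=1}^n\eta_i\|\x_i^{k+1}-\x_i^*\|^2+\beta_k^{-1}\|\bflambda^{k+1}-\bflambda^*\|^2
\le \beta_k\sum_{i=1}^n\eta_i\|\x_i^{k}-\x_i^*\|^2+\beta_k^{-1}\|\bflambda^{k}-\bflambda^*\|^2 .
\end{equation*}
To chain these over $k$ you must normalize so that every weight is non-increasing in $k$: dividing by $\beta_k$ gives unit weights $\eta_i$ on the primal part and $\beta_k^{-2}$ on the multiplier part, and since $\beta_{k+1}\ge\beta_k$ implies $\beta_{k+1}^{-2}\le\beta_k^{-2}$, the quantity $\sum_i\eta_i\|\x_i^k-\x_i^*\|^2+\beta_k^{-2}\|\bflambda^k-\bflambda^*\|^2$ is genuinely non-increasing (this is exactly the paper's Proposition~\ref{prop:converge_multivar}-1)). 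With your mixed weights ($1$ on the primal part but only $\beta_k^{-1}$ on the dual part) the required inequality $(\beta_{k+1}^{-1}-\beta_k^{-2})\|\bflambda^{k+1}-\bflambda^*\|^2\le(\beta_k^{-1}-\beta_k^{-2})\|\bflambda^{k}-\bflambda^*\|^2$ is not implied. Replace $\beta_k^{-1}$ by $\beta_k^{-2}$ in $\Phi_k$ and the rest of your argument (boundedness, vanishing increments and feasibility residual, KKT limit point, Fej\'er argument at that point) goes through exactly as in the paper.
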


\subsection{Enhanced Convergence Results}

Theorem~\ref{thm:converge_multivar} is a convergence result for
general convex programs~(\ref{eq:model_problem_multivar}), where
$f_i$'s are general convex functions and hence $\{\beta_k\}$ needs
to be bounded. Actually, almost all the existing theories on ADM
and LADM even assumed a fixed $\beta$. For adaptive $\beta_k$, it
will be more convenient if a user needs not to specify an upper
bound on $\{\beta_k\}$ because imposing a large upper bound
essentially equals to allowing $\{\beta_k\}$ to be unbounded.
Since many machine learning problems choose $f_i$'s as
matrix/vector norms, which result in bounded subgradients, we find
that the boundedness assumption can be removed. Moreover, we can
further prove the \emph{sufficient and necessary} condition for
global convergence.

\begin{theorem}\label{thm:convergence_unbounded}{\bf (Sufficient Condition for Global Convergence)}
If $\{\beta_k\}$ is non-decreasing and
$\sum\limits_{k=1}^{+\infty}\beta_k^{-1}=+\infty$, $\eta_i >
n\|\A_i\|^2$, $\partial f_i(\x)$ is bounded, $i=1,\cdots,n$, then
the sequence $\{\x_i^k\}$ generated by LADMPSAP converges to an
optimal solution to (\ref{eq:model_problem_multivar}).
\end{theorem}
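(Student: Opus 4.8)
The plan is to adapt the Fej\'er-monotonicity argument behind Theorem~\ref{thm:converge_multivar} to an \emph{unbounded} $\{\beta_k\}$ by using a \emph{penalty-scaled} Lyapunov function, and then to exploit the bounded-subgradient hypothesis, together with the standing assumption that $\A$ is onto, to decouple the dual component. First I would record the optimality condition of subproblem~\eqref{eq:LADMPSAP_update_xi}: there is $\mathbf{g}_i^{k+1}\in\partial f_i(\x_i^{k+1})$ with $\mathbf{g}_i^{k+1}=-\A_i^{\dag}(\hat{\bflambda}^k)-\eta_i\beta_k(\x_i^{k+1}-\x_i^k)$, and combine it with the KKT relations $-\A_i^{\dag}(\bflambda^*)\in\partial f_i(\x_i^*)$ and $\sum_i\A_i(\x_i^*)=\b$, the update $\bflambda^{k+1}-\bflambda^k=\beta_k(\sum_i\A_i(\x_i^{k+1})-\b)$, and the identity $\hat{\bflambda}^k-\bflambda^{k+1}=-\beta_k\sum_i\A_i(\x_i^{k+1}-\x_i^k)$.

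The core computation is to feed the subgradient inequality for $f_i$ at $\x_i^{k+1}$ (reference $\x_i^*$) --- noting that $-\A_i^{\dag}(\bflambda^*)\in\partial f_i(\x_i^*)$ makes $\{\x_i^*\}$ a minimiser of the Lagrangian $\mathcal{L}(\{\x_i\},\bflambda^*)=\sum_i f_i(\x_i)+\<\bflambda^*,\sum_i\A_i(\x_i)-\b\>$, so $\mathcal{L}(\{\x_i^{k+1}\},\bflambda^*)\ge p^*$ where $p^*$ is the optimal value --- into the polarization identity, and to bound the resulting cross term by Young's inequality together with $\lbar\sum_i\A_i(\v_i)\rbar^2\le n\sum_i\lbar\A_i\rbar^2\lbar\v_i\rbar^2$. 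With a split parameter $\theta\in\big(\max_i n\lbar\A_i\rbar^2/\eta_i,\ 1\big)$ (nonempty since $\eta_i>n\lbar\A_i\rbar^2$) and $\Phi_k:=\sum_i\eta_i\lbar\x_i^k-\x_i^*\rbar^2+\beta_k^{-2}\lbar\bflambda^k-\bflambda^*\rbar^2$, I expect this to give
\begin{align*}
\Phi_{k+1}\ \le\ \Phi_k & -\sum_i c_i\lbar\x_i^{k+1}-\x_i^k\rbar^2-(1-\theta)\,\beta_k^{-2}\lbar\bflambda^{k+1}-\bflambda^k\rbar^2\\
& -2\beta_k^{-1}\big(\mathcal{L}(\{\x_i^{k+1}\},\bflambda^*)-p^*\big),
\end{align*}
with $c_i=\eta_i-(n/\theta)\lbar\A_i\rbar^2>0$, where $\beta_{k+1}\ge\beta_k$ is used to pass from $\beta_k^{-2}$ to $\beta_{k+1}^{-2}$ in the dual term on the left. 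All three subtracted quantities are nonnegative, so $\{\Phi_k\}$ converges and each is summable; in particular $\x_i^{k+1}-\x_i^k\to\bzero$, the feasibility residual $\sum_i\A_i(\x_i^{k+1})-\b=\beta_k^{-1}(\bflambda^{k+1}-\bflambda^k)\to\bzero$, and $\sum_{k}\beta_k^{-1}\big(\mathcal{L}(\{\x_i^{k+1}\},\bflambda^*)-p^*\big)<+\infty$.

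The step I expect to be the main obstacle is turning convergence of $\{\Phi_k\}$ into convergence of $\{\x_i^k\}$, which requires getting rid of the dual term, and producing an asymptotically optimal iterate. For the former I would use the bounded-subgradient hypothesis: $\lbar\mathbf{g}_i^{k+1}\rbar\le C$ gives $\lbar\A_i^{\dag}(\hat{\bflambda}^k)\rbar\le C+\eta_i\beta_k\lbar\x_i^{k+1}-\x_i^k\rbar$, and since $\A$ onto yields $\lbar\bv\rbar\le\kappa\lbar\A^{\dag}(\bv)\rbar$ for some constant $\kappa$, a short estimate (using also $\hat{\bflambda}^k-\bflambda^{k+1}=-\beta_k\sum_i\A_i(\x_i^{k+1}-\x_i^k)$) gives $\lbar\bflambda^{k+1}\rbar\le C_1+C_2\beta_k\big(\sum_i\lbar\x_i^{k+1}-\x_i^k\rbar^2\big)^{1/2}$, hence $\lbar\bflambda^{k+1}\rbar/\beta_{k+1}\le C_1/\beta_{k+1}+C_2\big(\sum_i\lbar\x_i^{k+1}-\x_i^k\rbar^2\big)^{1/2}$. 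If $\{\beta_k\}$ is bounded, Theorem~\ref{thm:converge_multivar} already applies; otherwise $\beta_k\to+\infty$, so the right-hand side $\to0$, i.e.\ $\bflambda^k/\beta_k\to\bzero$; then $\beta_k^{-2}\lbar\bflambda^k-\bflambda^*\rbar^2\to0$, and therefore $\sum_i\eta_i\lbar\x_i^k-\x_i^*\rbar^2=\Phi_k-\beta_k^{-2}\lbar\bflambda^k-\bflambda^*\rbar^2$ converges --- for \emph{every} optimal $\{\x_i^*\}$ (paired with any dual optimum), which also shows $\{\x_i^k\}$ is bounded. For the latter, here is where $\sum_k\beta_k^{-1}=+\infty$ enters: since the nonnegative numbers $\mathcal{L}(\{\x_i^{k+1}\},\bflambda^*)-p^*$ are summable against weights $\beta_k^{-1}$ whose sum diverges, $\liminf_k\big(\mathcal{L}(\{\x_i^{k+1}\},\bflambda^*)-p^*\big)=0$; along a subsequence realising the liminf, a cluster point $\{\bar{\x}_i\}$ of the (bounded) iterates satisfies, by continuity of $\mathcal{L}(\cdot,\bflambda^*)$ (the $f_i$ are Lipschitz since their subgradients are bounded) and by feasibility, $\sum_i f_i(\bar{\x}_i)=\mathcal{L}(\{\bar{\x}_i\},\bflambda^*)=p^*$ with $\sum_i\A_i(\bar{\x}_i)=\b$, so $\{\bar{\x}_i\}$ is an optimal solution. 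Finally, applying the just-proved convergence of $\sum_i\eta_i\lbar\x_i^k-\bar{\x}_i\rbar^2$ along the subsequence that converges to $\{\bar{\x}_i\}$ (using $\x_i^{k+1}-\x_i^k\to\bzero$) forces that limit to be $0$, whence $\x_i^k\to\bar{\x}_i$ for every $i$. The two genuinely delicate points are making the $\theta$-split tight enough to keep a strictly negative $\lbar\bflambda^{k+1}-\bflambda^k\rbar^2$ term in the potential decrease (so that feasibility does not need an extra assumption such as $\sum_k\beta_k^{-2}<+\infty$) and establishing $\bflambda^k/\beta_k\to\bzero$, for which onto-ness of $\A$ and the uniform subgradient bound are both indispensable.
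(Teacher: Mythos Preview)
Your proposal is correct and follows the same overall strategy as the paper's proof: a $\beta_k$-scaled Fej\'er/Lyapunov inequality for $\Phi_k=\sum_i\eta_i\lbar\x_i^k-\x_i^*\rbar^2+\beta_k^{-2}\lbar\bflambda^k-\bflambda^*\rbar^2$, extraction of an optimal cluster point by playing the summable nonnegative term against $\sum_k\beta_k^{-1}=+\infty$, and elimination of the dual contribution via the bounded-subgradient hypothesis together with onto-ness of $\A$ to get $\beta_k^{-1}\bflambda^k\to\bzero$. The only differences are implementational: you track the Lagrangian gap $\mathcal{L}(\{\x_i^{k+1}\},\bflambda^*)-p^*$ where the paper tracks the monotonicity inner product $\sum_i\langle\x_i^{k+1}-\x_i^*,\mathbf{g}_i^{k+1}+\A_i^{\dag}(\bflambda^*)\rangle$ (yours is a lower bound of theirs, and the Lipschitz continuity of $f_i$ you invoke makes the cluster-point argument slightly cleaner); you use a Young $\theta$-split to keep a $-(1-\theta)\beta_k^{-2}\lbar\bflambda^{k+1}-\bflambda^k\rbar^2$ term, whereas the paper's exact polarization yields $-\beta_k^{-2}\lbar\bflambda^k-\hat\bflambda^k\rbar^2$; and you bound $\lbar\bflambda^{k+1}\rbar/\beta_{k+1}$ quantitatively via an onto-ness constant, whereas the paper passes to accumulation points of $\beta_k^{-1}\hat\bflambda^k$ and shows each lies in $\cap_i\mathrm{null}(\A_i^{\dag})=\{0\}$.
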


\begin{remark}\label{rem:convergence_of_lambda}
Theorem~\ref{thm:convergence_unbounded} does not claim that
$\{\bflambda^k\}$ converges to a point $\bflambda^\infty$.
However, as we are more interested in $\{\x_i^k\}$, such a
weakening is harmless.
\end{remark}
We also have the following result on the necessity of
$\sum\limits_{k=1}^{+\infty}\beta_k^{-1}=+\infty$.

\begin{theorem}\label{thm:convergence_unbounded_necessary}{\bf (Necessary Condition for Global Convergence)}
If $\{\beta_k\}$ is non-decreasing, $\eta_i > n\|\A_i\|^2$,
$\partial f_i(\x)$ is bounded, $i=1,\cdots,n$, then
$\sum\limits_{k=1}^{+\infty}\beta_k^{-1}=+\infty$ is also a
necessary condition for the global convergence of $\{\x_i^k\}$
generated by LADMPSAP to an optimal solution to
(\ref{eq:model_problem_multivar}).
\end{theorem}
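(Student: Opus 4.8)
The idea is to argue by contraposition: assume $\{\beta_k\}$ is non-decreasing but $\sum_{k=1}^{\infty}\beta_k^{-1} < +\infty$, and exhibit a situation in which LADMPSAP fails to converge to an optimal solution. The natural strategy is to track the multiplier update and the iterate displacements. From \eqref{eq:update_lambda_multivar} we have $\bflambda^{k+1}-\bflambda^k = \beta_k\bigl(\sum_i\A_i(\x_i^{k+1})-\b\bigr)$, so the feasibility residual at step $k$ equals $\beta_k^{-1}(\bflambda^{k+1}-\bflambda^k)$. If $\{\bflambda^k\}$ were to stay bounded (which is what one expects to extract from the boundedness of $\partial f_i$ together with the optimality conditions of \eqref{eq:LADMPSAP_update_xi}), then $\|\bflambda^{k+1}-\bflambda^k\|$ need not go to zero, and summing the feasibility residuals against the weights would not be controlled. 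More concretely, the plan is to use the optimality condition of the $\x_i$-subproblem, $-\A_i^{\dag}(\hat{\bflambda}^k) - \eta_i\beta_k(\x_i^{k+1}-\x_i^k) \in \partial f_i(\x_i^{k+1})$, which rewrites as $\A_i^{\dag}(\bflambda^{k+1}) + \eta_i\beta_k(\x_i^{k+1}-\x_i^k) + \A_i^{\dag}(\bflambda^{k+1}-\hat\bflambda^k) \in -\partial f_i(\x_i^{k+1})$; boundedness of $\partial f_i$ then forces the combination $\A_i^{\dag}(\bflambda^{k+1}) + \eta_i\beta_k(\x_i^{k+1}-\x_i^k)$ plus a multiplier-difference term to stay bounded.

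The core of the argument is to show that convergence of $\{\x_i^k\}$ to an optimal solution would force $\sum_k\beta_k^{-1}=+\infty$. The plan is: if $\x_i^k\to\x_i^\infty$ with $\{\x_i^\infty\}$ optimal, then in particular $\sum_i\A_i(\x_i^k)\to\b$, i.e. the feasibility residual $\mathbf{r}^k := \sum_i\A_i(\x_i^{k+1})-\b \to 0$. Now $\bflambda^{k+1} = \bflambda^0 + \sum_{j=0}^{k}\beta_j \mathbf{r}^j$. I would look for a lower bound on $\|\mathbf{r}^j\|$ in terms of the subgradient/optimality data that does \emph{not} decay too fast — essentially, if the limit is feasible but $\bflambda^k$ must travel an infinite total distance (because it must converge to a genuine multiplier $\bflambda^\infty \neq \bflambda^0$ in general, or oscillate), then $\sum_j\beta_j\|\mathbf{r}^j\|<\infty$ is needed, and combined with a reverse estimate $\|\mathbf{r}^j\| \gtrsim \beta_j^{-1}$ (coming from $\|\bflambda^{j+1}-\bflambda^j\| = \beta_j\|\mathbf{r}^j\|$ being bounded \emph{below} infinitely often along a subsequence, since otherwise the $\x_i$-displacements summed with $\A_i^\dagger\bflambda^{j+1}$ cannot reconcile with a fixed limiting multiplier), one deduces $\sum_j\beta_j\cdot\beta_j^{-1} = \sum_j 1 < \infty$, absurd — hence on the contrary $\sum_j\beta_j^{-1}$ must diverge. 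The cleanest route is probably to quantify, using the KKT optimality condition at the limit point, the minimal rate at which $\|\bflambda^{k+1}-\bflambda^k\|$ can decay: it is bounded, and I expect one can show it is bounded away from $0$ infinitely often unless the limit multiplier is reached, but reaching it requires $\sum\beta_k^{-1}=\infty$ by the telescoping/summability bookkeeping above.

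The main obstacle I anticipate is the middle link: converting "boundedness of $\partial f_i$ plus convergence of the $\x_i$-iterates" into a genuine \emph{lower} bound on the feasibility residuals $\|\mathbf{r}^k\|$, i.e. ruling out the degenerate scenario in which $\mathbf{r}^k$ decays fast enough that $\sum\beta_k\|\mathbf{r}^k\|<\infty$ even though $\sum\beta_k^{-1}<\infty$. This is where one must use that the algorithm genuinely has not yet reached a KKT point — presumably by invoking the stopping-criterion logic, or by a careful analysis of the Fejér-type inequality from the proof of Theorem~\ref{thm:convergence_unbounded} run "in reverse," showing that the gap between $\{\x_i^k\}$ and optimality is controlled both above and below by quantities comparable to $\beta_k^{-1}$. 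Once that two-sided control is in place, the divergence of $\sum\beta_k^{-1}$ follows by the summability bookkeeping; I would handle the edge cases (e.g. if some $\mathbf{r}^k$ happens to vanish exactly, the algorithm terminates and there is nothing to prove) separately and briefly.
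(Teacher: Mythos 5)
There is a genuine gap, and it sits exactly where you flag your ``main obstacle.'' Your plan tries to extract two-sided control on the feasibility residuals $\mathbf{r}^k=\sum_i\A_i(\x_i^{k+1})-\b$ and on the multiplier increments $\bflambda^{k+1}-\bflambda^k=\beta_k\mathbf{r}^k$ from the boundedness of $\partial f_i$. But the subproblem optimality condition only bounds the \emph{combination} $\A_i^{\dag}(\hat{\bflambda}^{k})+\eta_i\beta_k(\x_i^{k+1}-\x_i^k)$, and $\hat{\bflambda}^{k}$ itself may be unbounded when $\sum_k\beta_k^{-1}<+\infty$, so the pieces cannot be separated; moreover the hoped-for lower bound $\|\mathbf{r}^k\|\gtrsim\beta_k^{-1}$ infinitely often is simply false for some trajectories (e.g.\ one initialized at a KKT point has $\mathbf{r}^k\equiv 0$), so the contradiction $\sum_k 1<\infty$ you aim for cannot be forced. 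This also reflects a misreading of the statement: ``global convergence'' means convergence from \emph{every} initialization, so necessity is proved by exhibiting a \emph{bad} initialization under $\sum_k\beta_k^{-1}<+\infty$, not by showing that every convergent run must have $\sum_k\beta_k^{-1}=+\infty$ (which is not true).

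The paper's argument supplies precisely the missing device. Since the solution to $\sum_i\A_i(\x_i)=\b$ is assumed non-unique, the stacked matrix $(\mathbf{A}_1\,\cdots\,\mathbf{A}_n)$ is not full column rank, so there exist linear maps $\B_i$, not all zero, with $\sum_{i=1}^n\B_i\A_i^{\dag}=0$. Applying $\B_i$ to the relations $-\sigma_i^{(k)}(\x_i^{k+1}-\u_i^k)\in\partial f_i(\x_i^{k+1})$ and summing \emph{annihilates} the troublesome $\hat{\bflambda}^{k}$ term and, by boundedness of the subgradients, yields $\|\v^{k+1}-\v^k\|\leq C\beta_k^{-1}$ for $\v^k=\phi(\x_1^k,\cdots,\x_n^k)=\sum_i\eta_i\B_i(\x_i^k)$. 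If $\sum_k\beta_k^{-1}<+\infty$, the total travel of $\{\v^k\}$ is finite, so choosing an initialization with $\|\v^0-\phi(\x_1^*,\cdots,\x_n^*)\|>C\sum_k\beta_k^{-1}$ (possible because $\phi\neq 0$) makes convergence to any optimal solution impossible. Without the construction of the $\B_i$'s (which uses the non-full-column-rank assumption you never invoke) and without switching from ``every convergent trajectory'' to ``there exists a bad initialization,'' your outline cannot be completed.
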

With the above analysis, when all the subgradients of the
component objective functions are bounded we can remove
$\beta_{\max}$ in Algorithm~\ref{alg:LADMPSAP-multivar}.

\subsection{Convergence Rate}
The convergence rate of ADM and LADM in the traditional sense is
an open problem~\citep{Goldfarb-10-fast}. Although
\cite{Luo-2012-LinearADM} claimed that they proved the linear
convergence rate of ADM, their assumptions are actually quite
strong. They assumed that the non-smooth part of $f_i$ is a sum of
$\ell_1$ and $\ell_2$ norms or its epigraph is polyhedral.
Moreover, the convex constraint sets should all be polyhedral and
bounded. So although their results are encouraging, for general
convex programs the convergence rate is still a mystery. Recently,
\cite{He-2012-Rate} and \cite{Tao-14-ADM_Parallel} proved an
$O(1/K)$ convergence rate of ADM and ADM with parallel splitting
in an ergodic sense, respectively. Namely
$\frac{1}{K}\sum\limits_{k=1}^{K}\x_i$ violates an optimality
measure in $O(1/K)$. Their proof is lengthy and is for fixed
penalty parameter only.

In this subsection, based on a simple optimality measure we give a
simple proof for the convergence rate of LADMPSAP. For simplicity,
we denote $\x=(\x_1^T,\cdots,\x_n^T)^T$,
$\x^*=((\x_1^*)^T,\cdots,(\x_2^*)^T)^T$, and
$f(\x)=\sum\limits_{i=1}^n f_i(\x_i)$. We first have the following
proposition.
\begin{proposition}\label{prop:optimality}
$\tilde{\x}$ is an optimal solution to
(\ref{eq:model_problem_multivar}) if and only if there exists
$\alpha > 0$, such that
\begin{equation}
f(\tilde{\x})-f(\x^*)+\sum\limits_{i=1}^n\<\A_i^{\dag}(\bflambda^*),\tilde{\x}_i-\x_i^*\>+\alpha\lbar\sum\limits_{i=1}^n
\A_i(\tilde{\x}_i)-\mathbf{b}\rbar^2
=0.\label{eq:constrained_optimality}
\end{equation}
\end{proposition}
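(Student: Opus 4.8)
\textbf{Proof plan for Proposition \ref{prop:optimality}.}
The plan is to read off everything from the KKT conditions of \eqref{eq:model_problem_multivar} at the point $(\x_1^*,\cdots,\x_n^*,\bflambda^*)$, namely $-\A_i^{\dag}(\bflambda^*)\in\partial f_i(\x_i^*)$ for $i=1,\cdots,n$ together with $\sum_{i=1}^n\A_i(\x_i^*)=\mathbf{b}$, and to treat the two implications separately. The elementary fact used throughout is that whenever $\tilde{\x}$ is feasible the cross term telescopes away: since both $\tilde{\x}$ and $\x^*$ satisfy the linear constraint, $\sum_{i=1}^n\<\A_i^{\dag}(\bflambda^*),\tilde{\x}_i-\x_i^*\> = \<\bflambda^*,\sum_{i=1}^n\A_i(\tilde{\x}_i)-\sum_{i=1}^n\A_i(\x_i^*)\> = \<\bflambda^*,\mathbf{b}-\mathbf{b}\> = 0$. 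Also recall that, because the constraints in \eqref{eq:model_problem_multivar} are affine, a KKT point of this convex program is a global minimizer, so $f(\x^*)$ is the optimal value.

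For the ``only if'' direction, assume $\tilde{\x}$ is optimal. Then $\tilde{\x}$ is feasible, so the term $\alpha\lbar\sum_{i=1}^n\A_i(\tilde{\x}_i)-\mathbf{b}\rbar^2$ vanishes for every $\alpha$, and by the observation above the cross term vanishes as well. Optimality gives $f(\tilde{\x})=f(\x^*)$, hence the entire left-hand side of \eqref{eq:constrained_optimality} equals $0$; any $\alpha>0$ (e.g.\ $\alpha=1$) then witnesses the claim.

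For the ``if'' direction, assume \eqref{eq:constrained_optimality} holds for some $\alpha>0$. Convexity of each $f_i$ together with $-\A_i^{\dag}(\bflambda^*)\in\partial f_i(\x_i^*)$ yields $f_i(\tilde{\x}_i)\ge f_i(\x_i^*)-\<\A_i^{\dag}(\bflambda^*),\tilde{\x}_i-\x_i^*\>$; summing over $i$ gives $f(\tilde{\x})-f(\x^*)+\sum_{i=1}^n\<\A_i^{\dag}(\bflambda^*),\tilde{\x}_i-\x_i^*\>\ge 0$. Substituting this nonnegativity into \eqref{eq:constrained_optimality} forces $\alpha\lbar\sum_{i=1}^n\A_i(\tilde{\x}_i)-\mathbf{b}\rbar^2\le 0$, so $\tilde{\x}$ is feasible; feasibility then makes the cross term vanish, and \eqref{eq:constrained_optimality} collapses to $f(\tilde{\x})=f(\x^*)$, the optimal value. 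A feasible point attaining the optimal value is optimal, completing the proof.

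The argument is essentially mechanical, so I do not expect a serious obstacle; the only points needing care are (i) observing that the cross term is not merely nonnegative but exactly zero on the feasible set, which is precisely what turns the characterization into an equality rather than an inequality, and (ii) invoking that the assumed KKT point is a global optimum, which is legitimate here only because the constraints are affine and hence no constraint qualification is needed.
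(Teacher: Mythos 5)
Your proposal is correct and follows essentially the same route as the paper's proof: the "if" direction rests on the subgradient inequality at $\x^*$ (making the first three terms nonnegative), which forces feasibility of $\tilde{\x}$ and then, via the vanishing cross term, $f(\tilde{\x})=f(\x^*)$; the "only if" direction is the same direct verification. The only difference is that you spell out the easy direction and the role of the KKT point as a global minimizer, which the paper leaves as "easy to check."
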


Since the left hand side of (\ref{eq:constrained_optimality}) is
always nonnegative and it becomes zero only when $\tilde{\x}$ is
an optimal solution, we may use its magnitude to measure how far a
point $\tilde{\x}$ is from an optimal solution. Note that in the
unconstrained case, as in APG~\citep{Beck2009}, one may simply use
$f(\tilde{\x})-f(\x^*)$ to measure the optimality. But here we
have to deal with the constraints. Our criterion is simpler than
that in \citep{He-2012-Rate,Tao-14-ADM_Parallel}, which has to
compare $(\{\x_i^k\},\lambda^k)$ with all
$(\x_1,\cdots,\x_n,\bflambda) \in \mathbb{R}^{d_1}\times\cdots
\times\mathbb{R}^{d_n}\times \mathbb{R}^{m}$.

Then we have the following convergence rate theorem for LADMPSAP
in an ergodic sense.
\begin{theorem}\label{thm:convergence_rate_2var}{\bf (Convergence Rate of LADMPSAP)}
Define $\bar{\x}^K=\sum\limits_{k=0}^K \gamma_k \x^{k+1}$, where
$\gamma_k=\beta_k^{-1}/\sum\limits_{j=0}^K \beta_j^{-1}$. Then the following inequality holds for $\bar{\x}^K$:
\begin{eqnarray}
\begin{array}{rl}
&f(\bar{\x}^K) - f(\x^*) + \sum\limits_{i=1}^n
\<\A_i^{\dag}(\bflambda^*),\bar{\x}_i^K-\x_i^*\>
+\dfrac{\alpha\beta_0}{2}\lbar \sum\limits_{i=1}^n
\A_i(\bar{\x}_i^K) - \mathbf{b}\rbar^2 \\
\leq & C_0/\left(2\sum\limits_{k=0}^K\beta_k^{-1}\right),
\end{array}\label{eq:convergence_rate_2var}
\end{eqnarray}
where $\alpha^{-1}=(n+1) \max\Bigg(1,
\Bigg\{\dfrac{\|\A_i\|^2}{\eta_i-n\|\A_i\|^2}, i=1,\cdots,$
$n\Bigg\}\Bigg)$ and
$C_0=\sum\limits_{i=1}^n\eta_i\lbar\x_i^{0}-\x_i^*\rbar^2$
$+\beta_0^{-2}\lbar\bflambda^{0}-\bflambda^*\rbar^2$.
\end{theorem}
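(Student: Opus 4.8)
The plan is to derive a per-iteration descent-type inequality from the optimality conditions of the subproblems~\eqref{eq:LADMPSAP_update_xi} and the multiplier update~\eqref{eq:update_lambda_multivar}, then telescope it after multiplying by $\beta_k^{-1}$, and finally invoke convexity of $f$ together with Proposition~\ref{prop:optimality} to pass to the ergodic average $\bar{\x}^K$. First I would write down the first-order optimality condition for the $\x_i$-subproblem: there exists $\mathbf{g}_i^{k+1}\in\partial f_i(\x_i^{k+1})$ with
\begin{equation}
\mathbf{g}_i^{k+1}+\eta_i\beta_k(\x_i^{k+1}-\x_i^k)+\A_i^{\dag}(\hat{\bflambda}^k)=\mathbf{0}.\nonumber
\end{equation}
Using $\hat{\bflambda}^k=\bflambda^k+\beta_k(\sum_j\A_j(\x_j^k)-\b)$ and $\bflambda^{k+1}=\bflambda^k+\beta_k(\sum_j\A_j(\x_j^{k+1})-\b)$, I would rewrite $\A_i^{\dag}(\hat{\bflambda}^k)=\A_i^{\dag}(\bflambda^{k+1})-\beta_k\A_i^{\dag}(\sum_j\A_j(\x_j^{k+1}-\x_j^k))$, so the stationarity condition becomes an identity relating $\mathbf{g}_i^{k+1}$, $\A_i^{\dag}(\bflambda^{k+1})$, and the cross terms $\beta_k\A_i^{\dag}\A_j(\x_j^{k+1}-\x_j^k)$. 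Pairing $\mathbf{g}_i^{k+1}$ with $\x_i^{k+1}-\x_i^*$ and summing over $i$, convexity of $f_i$ gives $f(\x^{k+1})-f(\x^*)+\sum_i\<\A_i^{\dag}(\bflambda^*),\x_i^{k+1}-\x_i^*\>\le\sum_i\<\mathbf{g}_i^{k+1}-\A_i^{\dag}(\bflambda^*),\x_i^{k+1}-\x_i^*\>+\big(f(\x^{k+1})-f(\x^*)+\sum_i\<\A_i^{\dag}(\bflambda^*),\x_i^{k+1}-\x_i^*\>\big)$, which after substituting the stationarity identity and using $\sum_i\A_i(\x_i^*)=\b$ reduces to an expression in the inner products $\<\bflambda^{k+1}-\bflambda^*,\sum_i\A_i(\x_i^{k+1})-\b\>$, the quadratic proximal terms $\eta_i\beta_k\<\x_i^{k+1}-\x_i^k,\x_i^{k+1}-\x_i^*\>$, and the cross-term sum $\beta_k\sum_{i,j}\<\A_j(\x_j^{k+1}-\x_j^k),\A_i(\x_i^{k+1}-\x_i^*)\>$.

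Next I would convert each of these into telescoping differences. The identity $\<\bflambda^{k+1}-\bflambda^*,\sum_i\A_i(\x_i^{k+1})-\b\>=\beta_k^{-1}\<\bflambda^{k+1}-\bflambda^*,\bflambda^{k+1}-\bflambda^k\>=\tfrac{1}{2}\beta_k^{-1}\big(\|\bflambda^{k+1}-\bflambda^*\|^2-\|\bflambda^k-\bflambda^*\|^2+\|\bflambda^{k+1}-\bflambda^k\|^2\big)$ handles the multiplier part, and $\eta_i\beta_k\<\x_i^{k+1}-\x_i^k,\x_i^{k+1}-\x_i^*\>=\tfrac{\eta_i\beta_k}{2}\big(\|\x_i^{k+1}-\x_i^*\|^2-\|\x_i^k-\x_i^*\|^2+\|\x_i^{k+1}-\x_i^k\|^2\big)$ handles the proximal part. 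The cross terms are the delicate point: I would bound $\beta_k\sum_{i,j}\<\A_j(\x_j^{k+1}-\x_j^k),\A_i(\x_i^{k+1}-\x_i^*)\>$ using Cauchy--Schwarz, $\|\A_i\|$, and the algebraic inequality $ab\le\tfrac{t}{2}a^2+\tfrac{1}{2t}b^2$, then absorb the $\|\x_i^{k+1}-\x_i^*\|^2$-type pieces into the proximal telescoping terms and the $\|\x_j^{k+1}-\x_j^k\|^2$-type pieces against the $\tfrac{\eta_i\beta_k}{2}\|\x_i^{k+1}-\x_i^k\|^2$ positive terms; this is exactly where the strengthened requirement $\eta_i>n\|\A_i\|^2$ (rather than $\eta_i>\|\A_i\|^2$) is consumed, and it is what produces the constant $\alpha^{-1}=(n+1)\max(1,\{\|\A_i\|^2/(\eta_i-n\|\A_i\|^2)\})$. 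After these manipulations I expect to reach a clean inequality of the shape
\begin{equation}
\beta_k^{-1}\Big[f(\x^{k+1})-f(\x^*)+\textstyle\sum_i\<\A_i^{\dag}(\bflambda^*),\x_i^{k+1}-\x_i^*\>+\alpha\lbar\textstyle\sum_i\A_i(\x_i^{k+1})-\b\rbar^2\Big]\le\tfrac12(\Phi_k-\Phi_{k+1}),\nonumber
\end{equation}
where $\Phi_k=\sum_i\eta_i\|\x_i^k-\x_i^*\|^2+\beta_k^{-2}\|\bflambda^k-\bflambda^*\|^2$ up to the monotonicity of $\{\beta_k\}$ (which ensures $\beta_{k+1}^{-2}\le\beta_k^{-2}$, so the $\bflambda$-term still telescopes correctly); note $\Phi_0=C_0$.

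Finally I would sum this from $k=0$ to $K$, getting $\sum_{k=0}^K\beta_k^{-1}[\cdots]_{k+1}\le\tfrac12\Phi_0=\tfrac12 C_0$, divide by $\sum_{k=0}^K\beta_k^{-1}$, and use the definition $\gamma_k=\beta_k^{-1}/\sum_j\beta_j^{-1}$ together with convexity of $f$ and of $\|\cdot\|^2$ (Jensen) to move everything inside: $f(\bar{\x}^K)\le\sum_k\gamma_k f(\x^{k+1})$, $\sum_i\<\A_i^{\dag}(\bflambda^*),\bar{\x}_i^K-\x_i^*\>=\sum_k\gamma_k\sum_i\<\A_i^{\dag}(\bflambda^*),\x_i^{k+1}-\x_i^*\>$, and $\lbar\sum_i\A_i(\bar{\x}_i^K)-\b\rbar^2\le\sum_k\gamma_k\lbar\sum_i\A_i(\x_i^{k+1})-\b\rbar^2$. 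The factor $\beta_0$ in front of the feasibility term on the left comes from the crudest bound $\gamma_k\le\beta_0^{-1}/(\cdot)$ is not quite it — rather, one keeps the $\alpha\sum_k\gamma_k\lbar\cdot\rbar^2$ bound and notes that since the left side of~\eqref{eq:convergence_rate_2var} only needs $\tfrac{\alpha\beta_0}{2}\lbar\sum_i\A_i(\bar{\x}_i^K)-\b\rbar^2$, and $\beta_0\le\beta_k$ so $\beta_0\gamma_k\le\beta_0\cdot\beta_k^{-1}/\sum_j\beta_j^{-1}\le 1/\sum_j\beta_j^{-1}$... I would instead more carefully track that the feasibility contribution in the telescoped bound already carries weight $\ge\tfrac{\beta_0}{2}\alpha\gamma_k$, giving the stated $\tfrac{\alpha\beta_0}{2}$ coefficient after Jensen. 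The main obstacle is the cross-term estimate in the second paragraph: getting the constants to line up so that the residual $\|\x_i^{k+1}-\x_i^k\|^2$ terms are nonnegative and the coefficient in front of the feasibility term is exactly the claimed $\alpha$ requires a careful choice of the splitting parameter $t$ in each Young-type inequality and is the only place real work is needed; everything else is bookkeeping and telescoping.
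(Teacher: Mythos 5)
Your scaffolding coincides with the paper's proof: the subgradient optimality condition of \eqref{eq:LADMPSAP_update_xi} paired with $\x_i^{k+1}-\x_i^*$, convexity of $f_i$, the telescoping potential $\Phi_k=\sum_{i=1}^n\eta_i\|\x_i^k-\x_i^*\|^2+\beta_k^{-2}\|\bflambda^k-\bflambda^*\|^2$ (with $\beta_{k+1}\ge\beta_k$ so the multiplier term still telescopes), weighting by $\beta_k^{-1}$, the use of $\beta_0\beta_k^{-1}\le 1$ to introduce the factor $\beta_0$, and Jensen for the ergodic point are exactly Lemma~\ref{lem:subgradients}, Proposition~\ref{prop:basic-identity-multivar} and the chain (\ref{eq:recursive1})--(\ref{eq:recursive6}). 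The genuine gap is at the one step you explicitly defer: how the feasibility residual \emph{at iteration $k+1$} gets onto the left-hand side with a $k$-uniform coefficient. The Fej\'{e}r-type inequality leaves as spare nonnegative terms $\frac12\sum_{i=1}^n(\eta_i-n\|\A_i\|^2)\|\x_i^{k+1}-\x_i^k\|^2$ and $\frac12\beta_k^{-2}\|\bflambda^k-\hat{\bflambda}^k\|^2$, and the latter equals $\frac12\|\sum_{i=1}^n\A_i(\x_i^{k})-\b\|^2$, i.e.\ the residual at iteration $k$, not $k+1$. The paper therefore needs a separate conversion step (the first display of its proof): writing $\sum_i\A_i(\x_i^{k+1})-\b=\big(\sum_i\A_i(\x_i^{k})-\b\big)+\sum_i\A_i(\x_i^{k+1}-\x_i^k)$ and applying an $(n+1)$-term Cauchy--Schwarz bound, which is precisely where the factor $(n+1)$ and the $\max$ in $\alpha^{-1}$ come from. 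Your sketch never performs this conversion and instead attributes $\alpha$ entirely to the cross-term Young estimate.

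Moreover, the concrete recipe you give for that estimate fails if taken literally: bounding $\beta_k\sum_{i,j}\langle\A_j(\x_j^{k+1}-\x_j^k),\A_i(\x_i^{k+1}-\x_i^*)\rangle$ componentwise and ``absorbing the $\|\x_i^{k+1}-\x_i^*\|^2$-type pieces into the proximal telescoping terms'' adds a positive multiple of $\|\x_i^{k+1}-\x_i^*\|^2$ to the right-hand side, so the outgoing potential has a coefficient smaller than $\eta_i$, consecutive terms no longer cancel when you sum over $k$, and no bound by $C_0$ results. The factor $\sum_i\A_i(\x_i^{k+1}-\x_i^*)=\sum_i\A_i(\x_i^{k+1})-\b=\beta_k^{-1}(\bflambda^{k+1}-\bflambda^k)$ must be kept aggregated so that it feeds the multiplier telescoping (your own first identity), and the coefficient of $\|\sum_i\A_i(\x_i^{k+1})-\b\|^2$ must then be produced either from the resulting $\|\bflambda^{k+1}-\bflambda^k\|^2$ term or, as the paper does, from the spare terms via the $(n+1)$-term bound; both repairs work, but neither is what you wrote. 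Relatedly, your displayed per-iteration inequality puts the feasibility term inside the $\beta_k^{-1}$ bracket with coefficient $\alpha$, whereas the computation actually yields a feasibility term with a $k$-independent coefficient outside that bracket, and $\beta_0$ enters only through $\beta_0\beta_k^{-1}\le1$ --- your closing sentence gestures at this correctly, but it is inconsistent with the display, and reconciling the two is exactly the ``careful choice of splitting parameters'' you postponed. So, as written, the proposal reproduces the paper's outer structure but leaves its central estimate unproved, with the one concrete suggestion for it being unworkable.
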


Theorem~\ref{thm:convergence_rate_2var} means that $\bar{\x}^K$ is
by $O\left(1/\sum\limits_{k=0}^K\beta_k^{-1}\right)$ from being an
optimal solution. This theorem holds for both bounded and
unbounded $\{\beta_k\}$. In the bounded case,
$O\left(1/\sum\limits_{k=0}^K\beta_k^{-1}\right)$ is simply
$O(1/K)$. Theorem~\ref{thm:convergence_rate_2var} also hints that
$\sum\limits_{k=0}^K\beta_k^{-1}$ should approach infinity to
guarantee the convergence of LADMPSAP, which is consistent with
Theorem~\ref{thm:convergence_unbounded_necessary}.

\section{Practical LADMPSAP for Convex Programs with Convex Set Constraints}\label{sec:PPLAMDAP}
In real applications, we are often faced with convex programs with
convex set constraints:
\begin{equation}
\min\limits_{\x_1,\cdots,\x_n} \sum\limits_{i=1}^n f_i(\x_i), \
s.t. \ \sum\limits_{i=1}^n\A_i(\x_i)=\mathbf{b}, \ \x_i\in X_i,
i=1,\cdots,n,\label{eq:model_problem_multivar_convex_sets}
\end{equation}
where $X_i\subseteq\mathbb{R}^{d_i}$ is a closed convex set. In
this section, we consider to extend LADMPSAP to solve the more
complex convex set constraint model
\eqref{eq:model_problem_multivar_convex_sets}. We assume that the
projections onto $X_i$'s are all easily computable. For many
convex sets used in machine learning, such an assumption is valid,
e.g., when $X_i$'s are nonnegative cones or positive semi-definite
cones. In the following, we discuss how to solve
(\ref{eq:model_problem_multivar_convex_sets}) efficiently. For
simplicity, we assume $X_i\neq \mathbb{R}^{d_i}$, $\forall i$.
Finally, we assume that $\db$ is an interior point of
$\sum\limits_{i=1}^n\A_i(X_i)$.

We introduce auxiliary variables $\x_{n+i}$ to convert $\x_i\in
X_i$ into $\x_i=\x_{n+i}$ and $\x_{n+i}\in X_i$, $i=1,\cdots,n$.
Then (\ref{eq:model_problem_multivar_convex_sets}) can be
reformulated as:
\begin{equation}
\min\limits_{\x_1,\cdots,\x_{2n}} \sum\limits_{i=1}^{2n}
f_i(\x_i),\quad s.t.\quad
\sum\limits_{i=1}^{2n}\hat{\A}_i(\x_i)=\hat{\mathbf{b}},\label{eq:model_problem_multivar_equiv}
\end{equation}
where $$f_{n+i}(\x)\equiv \chi_{X_i}(\x)=\left\{
\begin{array}{ll}
0,& \mbox{if }\x\in X_i,\\
+\infty, &\mbox{otherwise},
\end{array}
\right.$$ is the characteristic function of $X_i$,
\begin{eqnarray}
\begin{array}{c}
\hat{\A}_i(\x_i)= \left(
\begin{array}{c}
\A_i(\x_i)\\
0\\
\vdots\\
\x_i\\
\vdots\\
0
\end{array}
\right), \hat{\A}_{n+i}(\x_{n+i})=\left(
\begin{array}{c}
0\\
0\\
\vdots\\
-\x_{n+i}\\
\vdots\\
0
\end{array}
\right), \mbox{ and } \hat{\b}=\left(
\begin{array}{c}
\b\\
0\\
\vdots\\
0\\
\vdots\\
0
\end{array}
\right),
\end{array}\label{eq:redefine_equiv}
\end{eqnarray}
where $i=1,\cdots,n$.

\invisible{
\begin{equation}
\begin{array}{c}
\hat{\A}_i(\x_i) = \left( \A_i(\x_i), 0,\cdots,\x_i,\cdots,0
\right)^T,
\hat{\A}_{n+i}(\x_{n+i}) =\left( 0,0,\cdots,-\x_{n+i},\cdots,0
\right)^T,
i=1,\cdots,n, \\
\hat{\mathbf{b}}=\left(\mathbf{b},0,\cdots,0,\cdots,0 \right)^T.
\end{array}\label{eq:redefine_equiv}
\end{equation}
}

The adjoint operator $\hat{\A}_i^{\dag}$ is
\begin{equation}
\hat{\A}_i^{\dag}(\y)=\A_i^{\dag}(\y_{1})+\y_{i+1},\
\hat{\A}_{n+i}^{\dag}(\y)=-\y_{i+1}, \
i=1,\cdots,n,\label{eq:adjoint_hat_A}
\end{equation}
where $\y_{i}$ is the $i$-th sub-vector of $\y$, partitioned
according to the sizes of $\mathbf{b}$ and $\x_i$, $i=1,\cdots,n$.

Then LADMPSAP can be applied to solve problem
(\ref{eq:model_problem_multivar_equiv}). The Lagrange
multiplier $\bflambda$ and the auxiliary multiplier
$\hat{\bflambda}$ are respectively updated as
\begin{eqnarray}
&\bflambda^{k+1}_{1}=\bflambda^{k}_{1} + \beta_k\left(\sum\limits_{i=1}^n \A_i(\x_i^{k+1})-\mathbf{b}\right),\
\bflambda^{k+1}_{i+1}=\bflambda^{k}_{i+1} +
\beta_k(\x_i^{k+1}-\x_{n+i}^{k+1}), \ \label{eq:update_lambda_multivar-equiv}\\
&\hat{\bflambda}^{k}_{1}=\bflambda^{k}_{1} + \beta_k\left(\sum\limits_{i=1}^n \A_i(\x_i^{k})-\mathbf{b}\right),\
\hat{\bflambda}^{k}_{i+1}=\bflambda^{k}_{i+1} +
\beta_k(\x_i^{k}-\x_{n+i}^{k}), \label{eq:update_hatlambda_multivar-equiv}
\end{eqnarray}
and $\x_i$ is updated as (see (\ref{eq:update_xi}))
\begin{eqnarray}
\x_i^{k+1}&=&\argmin\limits_{\x} f_i(\x)+\dfrac{\eta_i\beta_k}{2}\left\|\x-\x_i^k +[\A_i^{\dag}(\hat{\bflambda}^k_{1})+\hat{\bflambda}^k_{i+1}]/(\eta_i\beta_k)\right\|^2,\ \label{eq:LADMPSAP_update_xi1}\\
\x_{n+i}^{k+1}&=&\argmin\limits_{\x\in X_i} \dfrac{\eta_{n+i}\beta_k}{2}\left\|\x-\x_{n+i}^k-\hat{\bflambda}^k_{i+1}/(\eta_{n+i}\beta_k)\right\|^2\nonumber\\
&=&\pi_{X_i}
\left(\x_{n+i}^k+\hat{\bflambda}^k_{i+1}/(\eta_{n+i}\beta_k)\right),\label{eq:LADMPSAP_update_xi2}
\end{eqnarray}
where $\pi_{X_i}$ is the projection onto $X_i$ and $i=1,\cdots,n$.

As for the choice of $\eta_i$'s, although we can simply apply
Theorem~\ref{thm:converge_multivar} to assign their values as
$\eta_i
> 2n (\|\A_i\|^2+1)$ and $\eta_{n+i} > 2n$, $i=1,\cdots,n$, such
choices are too pessimistic. As $\eta_i$'s are related to the
magnitudes of the differences in $\x_i^{k+1}$ from $\x_i^k$, we
had better provide tighter estimate on $\eta_i$'s in order to
achieve faster convergence. Actually, we have the following better
result.
\begin{theorem}\label{thm:better_eta}
For problem (\ref{eq:model_problem_multivar_equiv}), if
$\{\beta_k\}$ is non-decreasing and upper bounded and $\eta_i$'s
are chosen as $\eta_i > n \|\A_i\|^2 + 2$ and $\eta_{n+i} > 2$,
$i=1,\cdots,n$, then the sequence
$\{(\{\mathbf{x}_i^k\},\bflambda^k)\}$ generated by LADMPSAP
converge to a KKT point of problem
(\ref{eq:model_problem_multivar_equiv}).
\end{theorem}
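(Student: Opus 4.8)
The plan is to avoid invoking Theorem~\ref{thm:converge_multivar} as a black box for the $2n$-block program~(\ref{eq:model_problem_multivar_equiv}) and instead to reopen its proof, exploiting the very sparse structure of the operators $\hat{\A}_i$ in~(\ref{eq:redefine_equiv}). A verbatim application would demand $\eta_i > 2n\|\hat{\A}_i\|^2$; since $\|\hat{\A}_i(\x)\|^2 = \|\A_i(\x)\|^2 + \|\x\|^2$ gives $\|\hat{\A}_i\|^2 \le \|\A_i\|^2 + 1$ and $\|\hat{\A}_{n+i}\|^2 = 1$, this yields the pessimistic bounds $\eta_i > 2n(\|\A_i\|^2+1)$ and $\eta_{n+i} > 2n$. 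The key observation is that the stacked map $\hat{\A}(\x) \equiv \sum_{i=1}^{2n}\hat{\A}_i(\x_i)$ takes values in the orthogonal direct sum $\mathbb{R}^m \oplus \mathbb{R}^{d_1}\oplus\cdots\oplus\mathbb{R}^{d_n}$, and each of its summands is extremely sparse: the first block-component only involves $\x_1,\dots,\x_n$ through $\A_1,\dots,\A_n$, whereas the $(i{+}1)$-th block-component only involves the single pair $(\x_i,\x_{n+i})$ through the identity.

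First I would isolate the unique place in the proof of Theorem~\ref{thm:converge_multivar} where the lower bound on $\eta_i$ is consumed. In the Fej\'er-type (contraction) inequality one must upper bound the multiplier increment $\beta_k^{-2}\lbar\bflambda^{k+1}-\hat{\bflambda}^k\rbar^2$; using the updates~(\ref{eq:update_lambda_multivar-equiv})--(\ref{eq:update_hatlambda_multivar-equiv}) one has $\bflambda^{k+1}-\hat{\bflambda}^k = \beta_k\sum_{i=1}^{2n}\hat{\A}_i(\x_i^{k+1}-\x_i^k)$, so this term equals $\lbar\sum_{i=1}^{2n}\hat{\A}_i(\x_i^{k+1}-\x_i^k)\rbar^2$. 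The generic proof bounds this crudely by $(2n)\sum_{i=1}^{2n}\|\hat{\A}_i\|^2\lbar\x_i^{k+1}-\x_i^k\rbar^2$ through a single Cauchy--Schwarz over all $2n$ summands; this is exactly the step to refine. Writing $\delta_i = \x_i^{k+1}-\x_i^k$ and splitting along the orthogonal block-components,
\[
\lbar\sum_{i=1}^{2n}\hat{\A}_i(\delta_i)\rbar^2 = \lbar\sum_{i=1}^n\A_i(\delta_i)\rbar^2 + \sum_{i=1}^n\lbar\delta_i-\delta_{n+i}\rbar^2 \le \sum_{i=1}^n\left(n\|\A_i\|^2 + 2\right)\lbar\delta_i\rbar^2 + \sum_{i=1}^n 2\lbar\delta_{n+i}\rbar^2,
\]
where I applied Cauchy--Schwarz with $n$ terms to the first summand and with $2$ terms to each difference $\lbar\delta_i-\delta_{n+i}\rbar^2$.

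Substituting this sharpened estimate back into the contraction inequality, the remainder that must stay non-positive for the Fej\'er monotonicity to hold becomes (up to the usual $\beta$-dependent corrections, which vanish because $\{\beta_k\}$ is non-decreasing and bounded) schematically $-\sum_{i=1}^n(\eta_i - n\|\A_i\|^2 - 2)\lbar\delta_i\rbar^2 - \sum_{i=1}^n(\eta_{n+i}-2)\lbar\delta_{n+i}\rbar^2$, which is non-positive precisely under the stated conditions $\eta_i > n\|\A_i\|^2+2$ and $\eta_{n+i} > 2$. Everything else in the proof of Theorem~\ref{thm:converge_multivar} then carries over unchanged: boundedness of $\{(\{\x_i^k\},\bflambda^k)\}$, summability of $\sum_k\big(\sum_i\lbar\delta_i\rbar^2 + \beta_k^{-2}\lbar\bflambda^{k+1}-\bflambda^k\rbar^2\big)$ hence $\delta_i\to\bzero$, extraction of a subsequence converging to a point satisfying the KKT conditions of~(\ref{eq:model_problem_multivar_equiv}) (such a point exists because $\db$ is an interior point of $\sum_i\A_i(X_i)$), and finally upgrading subsequential convergence to convergence of the whole sequence.

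I expect the main obstacle to be purely organizational rather than conceptual: one must re-run the derivation of the contraction inequality from the subproblem optimality conditions of~(\ref{eq:LADMPSAP_update_xi1})--(\ref{eq:LADMPSAP_update_xi2})—written through the adjoint formula~(\ref{eq:adjoint_hat_A})—together with the multiplier recursions, and verify that no cross term between the first block-component and the coupling block-components survives (it does not, as those components span mutually orthogonal subspaces), so that the \emph{only} quantitative change relative to the proof of Theorem~\ref{thm:converge_multivar} is the replacement of $2n\|\hat{\A}_i\|^2$ by $n\|\A_i\|^2+2$ for $i\le n$ and by $2$ for $i>n$. No analytic idea beyond this bookkeeping is required.
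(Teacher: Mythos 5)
Your proposal matches the paper's own proof: the paper's Proposition~\ref{prop:basic-identity-multivar-equiv} performs exactly the refinement you describe, splitting $\bigl\|\sum_{i=1}^{2n}\hat{\A}_i(\x_i^{k+1}-\x_i^k)\bigr\|^2$ along the orthogonal block components into $\bigl\|\sum_{i=1}^{n}\A_i(\x_i^{k+1}-\x_i^k)\bigr\|^2+\sum_{i=1}^{n}\bigl\|(\x_i^{k+1}-\x_i^k)-(\x_{n+i}^{k+1}-\x_{n+i}^k)\bigr\|^2$ and bounding these by Cauchy--Schwarz with $n$ and $2$ terms respectively, which yields the coefficients $n\|\A_i\|^2+2$ and $2$; the remainder of the argument is then carried over verbatim from the proof of Theorem~\ref{thm:converge_multivar}, just as you indicate. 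The proposal is correct and takes essentially the same route.
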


Finally, we summarize LADMPSAP for problem
(\ref{eq:model_problem_multivar_equiv}) in
Algorithm~\ref{alg:LADMPSAP-multivar_equiv}, which is a practical
algorithm for solving
(\ref{eq:model_problem_multivar_convex_sets}).

\begin{algorithm}[tb]
   \caption{LADMPSAP for (\ref{eq:model_problem_multivar_equiv}), also a Practical Algorithm for (\ref{eq:model_problem_multivar_convex_sets}).}
   \label{alg:LADMPSAP-multivar_equiv}
\begin{algorithmic}
   \STATE {\bfseries Initialize:}
     Set $\rho_0>1$, $\varepsilon_1>0$, $\varepsilon_2 > 0$, $\beta_{\max} \gg 1 \gg \beta_0 > 0$, $\bflambda^0=((\bflambda_{1}^0)^T,\cdots,(\bflambda_{n+1}^0)^T)^T$,
   $\eta_i > n\|\mathcal{A}_i\|^2+2$, $\eta_{n+i} > 2$, $\mathbf{x}_i^0$, $\mathbf{x}_{n+i}^0=\mathbf{x}_{i}^0$, $i=1,\cdots,n$.
   \WHILE {(\ref{eq:stopping1}) or (\ref{eq:stopping2}) is not satisfied}
   \STATE {\bfseries Step 1:} Compute $\hat{\bflambda}^k$ as (\ref{eq:update_hatlambda_multivar-equiv}).
   \STATE {\bfseries Step 2:} Update $\mathbf{x}_i$, $i=1,\cdots,2n$, in parallel as (\ref{eq:LADMPSAP_update_xi1})-(\ref{eq:LADMPSAP_update_xi2}).
   \STATE {\bfseries Step 3:} Update $\bflambda$ by (\ref{eq:update_lambda_multivar-equiv}) and $\beta$ by (\ref{eq:update_beta_multivar}) and (\ref{eq:update_rho}).
   \ENDWHILE
   \STATE (Note that in (\ref{eq:update_rho}), (\ref{eq:stopping1}), and (\ref{eq:stopping2}), $n$ and $\mathcal{A}_i$ should be replaced by $2n$ and $\hat{\mathcal{A}}_i$, respectively.)
\end{algorithmic}
\end{algorithm}

\begin{remark}
Analogs of Theorems~\ref{thm:convergence_unbounded} and
\ref{thm:convergence_unbounded_necessary} are also true for
Algorithm~\ref{alg:LADMPSAP-multivar_equiv} although $\partial
f_{n+i}$'s are unbounded, thanks to our assumptions that all
$\partial f_i$, $i=1,\cdots,n$, are bounded and $\db$ is an
interior point of $\sum\limits_{i=1}^n\A_i(X_i)$, which result in
an analog of Proposition~\ref{prop:unbounded_beta}. Consequently,
$\beta_{\max}$ can also be removed if all $\partial f_i$,
$i=1,\cdots,n$, are bounded.
\end{remark}

\begin{remark}
Since Algorithm~\ref{alg:LADMPSAP-multivar_equiv} is an
application of Algorithm~\ref{alg:LADMPSAP-multivar} to problem
(\ref{eq:model_problem_multivar_equiv}), only with refined
parameter estimation, its convergence rate in an ergodic sense is
also $O\left(1/\sum\limits_{k=0}^K \beta_k^{-1}\right)$, where $K$
is the number of iterations.
\end{remark}

\invisible{ Finally, we summarize LADMPSAP for problem
(\ref{eq:model_problem_multivar_equiv}) in
Algorithm~\ref{alg:LADMPSAP-multivar_equiv}.
Algorithm~\ref{alg:LADMPSAP-multivar_equiv} is the practical
algorithm to solve (\ref{eq:model_problem_multivar_convex_sets}).

\begin{algorithm}[tb]
   \caption{LADMPSAP for Problem (\ref{eq:model_problem_multivar_equiv}). It Is Also the Practical Algorithm for Solving Problem (\ref{eq:model_problem_multivar_convex_sets}).}
   \label{alg:LADMPSAP-multivar_equiv}
\begin{algorithmic}
   \STATE {\bfseries Initialize:}
     Set $\varepsilon_1>0$, $\varepsilon_2 > 0$, $\beta_{\max} \gg 1 \gg \beta_0 > 0$,
   $\eta_i > n\|\mathcal{A}_i\|^2+2$, $\eta_{n+i} > 2$, $\mathbf{x}_i^0$, $\mathbf{x}_{n+i}^0=\mathbf{x}_{i}^0$, $i=1,\cdots,n$, $\bflambda^0=((\bflambda_{1}^0)^T,\cdots,(\bflambda_{n+1}^0)^T)^T$, and $k\leftarrow 0$.
   \WHILE {a multi-block version of (\ref{eq:stopping1}) or (\ref{eq:stopping2}) is not satisfied}
   \STATE {\bfseries Step 1:} Compute $\hat{\bflambda}^k$ as (\ref{eq:update_hatlambda_multivar-equiv}).
   \STATE {\bfseries Step 2:} Update $\mathbf{x}_i$, $i=1,\cdots,2n$, in parallel as (\ref{eq:LADMPSAP_update_xi1})-(\ref{eq:LADMPSAP_update_xi2}).
   \STATE {\bfseries Step 3:} Update $\bflambda$ by a multi-block version of (\ref{eq:update_lambda_multivar-equiv}).
   \STATE {\bfseries Step 4:} Update $\beta$ by a multi-block version of (\ref{eq:update_beta}) and (\ref{eq:update_rho}).
   \STATE {\bfseries Step 5:} $k\leftarrow k+1$.
   \ENDWHILE
\end{algorithmic}
\end{algorithm}
}

\section{Proximal LADMPSAP for Even More General Convex Programs}\label{sec:G-LADMPSAP}

In LADMPSAP we have assumed that the subproblems
(\ref{eq:update_xi}) are easily solvable. In many machine learning
problems, the functions $f_i$'s are often matrix or vector norms
or characteristic functions of convex sets. So this assumption
often holds. Nonetheless, this assumption is not always true,
e.g., when $f_i$ is the logistic loss function
(see~\eqref{eq:logit'}). So in this section we aim at generalizing
LADMPSAP to solve even more general convex programs
\eqref{eq:model_problem_multivar}.

We are interested in the case that $f_i$ can be decomposed into
two components:
\begin{equation}\label{eq:decompose_f}
f_i(\x_i)=g_i(\x_i)+h_i(\x_i),
\end{equation}
where both $g_i$ and $h_i$ are convex, $g_i$ is $C^{1,1}$:
\begin{equation}
\lbar \nabla g_i(\x)- \nabla g_i(\y)\rbar \leq L_i \lbar
\x-\y\rbar,\quad \forall x,y\in\mathbb{R}^{d_i},
\label{eq:Lipschitz-continue}
\end{equation}
and $h_i$ may not be differentiable but its proximal operation is
easily solvable. For brevity, we call $L_i$ the Lipschitz constant
of $\nabla g_i$.

Recall that in each iteration of LADMPSAP, we have to solve
subproblem \eqref{eq:update_xi}. Since now we do not assume that
the proximal operation of $f_i$ \eqref{eq:proxy} is easily
solvable, we may have difficulty in solving subproblem
\eqref{eq:update_xi}. By \eqref{eq:decompose_f}, we write down
\eqref{eq:update_xi} as
\begin{equation}
\x_i^{k+1}=\argmin\limits_{\x_i} h_i(\x_i)+g_i(\x_i)
+\dfrac{\sigma_i^{(k)}}{2}\lbar\x_i-\x_i^{k}+\A_i^\dag(\hat{\bflambda}^{k})/\sigma_i^{(k)}\rbar^2,\quad
i=1,\cdots,n, \label{eq:update_xi_new}
\end{equation}
Since
$g_i(\x_i)+\dfrac{\sigma_i^{(k)}}{2}\lbar\x_i-\x_i^{k}+\A_i^\dag(\hat{\bflambda}^{k})/\sigma_i^{(k)}\rbar^2$
is $C^{1,1}$, we may also linearize it at $\x_i^k$ and add a
proximal term. Such an idea leads to the following updating scheme
of ${\x}_i$:
\begin{eqnarray}
\begin{array}{rcl}
\x_i^{k+1}&=&\argmin \limits_{\x_i} h_i(\x_i)+g_i(\x_i^k)+\dfrac{\sigma_i^{(k)}}{2}\lbar\A_i^\dag(\hat{\bflambda}^{k})/\sigma_i^{(k)}\rbar^2\\
             &&\quad +\langle \nabla g_i(\x_i^k)+\A_i^\dag(\hat{\bflambda}^{k}),\x_i-\x_i^k \rangle+\dfrac{\tau_i^{(k)}}{2}\lbar\x_i-\x_i^{k}\rbar^2\\
          &=&\argmin \limits_{\x_i} h_i(\x_i)+\frac{\tau_i^{(k)}}{2} \lbar \x_i-\x_i^k+\frac{1}{\tau_i^{(k)}}[\A_i^{\dag}(\hat \bflambda^k)+\nabla g_i(\x_i^k)] \rbar^2,
\end{array}\label{eq:update_xi_proximal}
\end{eqnarray}
where $i=1,\cdots,n$. The choice of $\tau_i^{(k)}$ is presented in
Theorem~\ref{thm:general_convergence_unbounded}, i.e.
$\tau_i^{(k)}=T_i+\beta_k \eta_i$, where $T_i \geq L_i$ and
$\eta_i
> n\|\A_i\|^2$ are both positive constants.

By our assumption on $h_i$, the above subproblems are easily
solvable. The update of Lagrange multiplier $\lambda$ and $\beta$
are still respectively goes as \eqref{eq:update_lambda_multivar}
and \eqref{eq:update_beta_multivar} but with
\begin{equation}
\rho = \left\{\begin{array}{ll} \rho_0, & \mbox{if} \
\max\left(\left\{\|\A_i\|^{-1}\lbar \nabla
g_i(\mathbf{x}_i^{k+1})-\nabla g_i(\mathbf{x}_i^k)-
\tau_{i}^{(k)}(\mathbf{x}_i^{k+1}-\mathbf{x}_i^k)\rbar,\right.\right.\\
&\quad i=1,\cdots,n\Big\}\Big)/\|\mathbf{b}\|
< \varepsilon_2,\\
1, & \mbox{otherwise}.
\end{array}\right.\label{eq:update_rho_proximal}
\end{equation}
The iteration terminates when the following two conditions are
met:
\begin{equation}
\left\|\sum\limits_{i=1}^n\mathcal{A}_i(\mathbf{x}_i^{k+1}) -
\mathbf{b}\right\|/\|\mathbf{b}\| <
\varepsilon_1,\label{eq:stopping3}
\end{equation}
\begin{eqnarray}
\begin{array}{l}
\max\left(\left\{\|\A_i\|^{-1}\lbar \nabla
g_i(\mathbf{x}_i^{k+1})-\nabla g_i(\mathbf{x}_i^k)-
\tau_{i}^{(k)}(\mathbf{x}_i^{k+1}-\mathbf{x}_i^k)\rbar,\right.\right.\\
\quad i=1,\cdots,n\Big\}\Big)/\|\mathbf{b}\| < \varepsilon_2.
\end{array}\label{eq:stopping4}
\end{eqnarray}
These two conditions are also deduced from the KKT conditions.

We call the above algorithm as proximal LADMPSAP and summarize it
in Algorithm~\ref{alg:general-LADMPSAP-multivar}.

\begin{algorithm}[tb]
   \caption{Proximal LADMPSAP for Solving (\ref{eq:model_problem_multivar}) with $f_i$ Satisfying \eqref{eq:decompose_f}.}
   \label{alg:general-LADMPSAP-multivar}
\begin{algorithmic}
   \STATE {\bfseries Initialize:}
     Set $\rho_0>1$, $\beta_0 > 0$, $\bflambda^0$, $T_i \geq L_i$, $\eta_i > n\|\A_i\|^2$, $\mathbf{x}_i^0$,
$i=1,\cdots,n$.
   \WHILE {(\ref{eq:stopping3}) or (\ref{eq:stopping4}) is not satisfied}
   \STATE {\bfseries Step 1:} Compute $\hat{\bflambda}^k$ as (\ref{eq:hat_lambda}).
   \STATE {\bfseries Step 2:} Update $\mathbf{x}_i$'s in parallel by solving
   \begin{equation}
   \x_i^{k+1}=\argmin \limits_{\x_i} h_i(\x_i)+\frac{\tau_i^{(k)}}{2} \lbar \x_i-\x_i^k+\frac{1}{\tau_i^{(k)}}[\A_i^{\dag}(\hat \bflambda^k)+\nabla g_i(\x_i^k)] \rbar^2,\ i=1,\cdots,n,\label{eq:general_LADMPSAP_update_xi}
   \end{equation}
   where $\tau_i^{(k)}=T_i+\beta_k \eta_i$.
   \STATE {\bfseries Step 3:} Update $\bflambda$ by (\ref{eq:update_lambda_multivar}) and $\beta$ by \eqref{eq:update_beta_multivar}
   with $\rho$ defined in \eqref{eq:update_rho_proximal}.
   \ENDWHILE
\end{algorithmic}
\end{algorithm}

As for the convergence of proximal LADMPSAP, we have the following
theorem.

\begin{theorem}\label{thm:general_convergence_unbounded}{\bf(Convergence of Proximal LADMPSAP)}
If $\beta_k$ is non-decreasing and upper bounded,
$\tau_i^{(k)}=T_i+\beta_k \eta_i$, where $T_i \geq L_i$ and
$\eta_i
> n\|\A_i\|^2$ are both positive constants, $i=1,\cdots,n$, then
$\{(\{\mathbf{x}_i^k\},\bflambda^k)\}$ generated by proximal
LADMPSAP converge to a KKT point of problem
(\ref{eq:model_problem_multivar}).
\end{theorem}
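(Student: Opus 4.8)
The plan is to follow the blueprint of the proof of Theorem~\ref{thm:converge_multivar} for plain LADMPSAP; the only genuinely new ingredient is that the extra linearization of $g_i$ introduces a descent-lemma error $\tfrac{L_i}{2}\|\x_i^{k+1}-\x_i^k\|^2$, and the requirement $T_i\ge L_i$ is designed precisely to absorb it. First I would write the first-order optimality condition of subproblem~\eqref{eq:general_LADMPSAP_update_xi}: there is $\mathbf{s}_i^{k+1}\in\partial h_i(\x_i^{k+1})$ with $\mathbf{s}_i^{k+1}=-\nabla g_i(\x_i^k)-\A_i^{\dag}(\hat{\bflambda}^k)-\tau_i^{(k)}(\x_i^{k+1}-\x_i^k)$. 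Adding the subgradient inequality for $h_i$ at $\x_i^{k+1}$ to the descent inequality $g_i(\x_i^{k+1})-g_i(\x_i^*)\le\langle\nabla g_i(\x_i^k),\x_i^{k+1}-\x_i^*\rangle+\tfrac{L_i}{2}\|\x_i^{k+1}-\x_i^k\|^2$, the two occurrences of $\nabla g_i(\x_i^k)$ cancel; combining the outcome with the lower bound $f_i(\x_i^{k+1})-f_i(\x_i^*)\ge\langle-\A_i^{\dag}(\bflambda^*),\x_i^{k+1}-\x_i^*\rangle$ (which follows from the KKT inclusion $-\A_i^{\dag}(\bflambda^*)\in\nabla g_i(\x_i^*)+\partial h_i(\x_i^*)$ and convexity of $f_i$), and expanding $\langle\x_i^{k+1}-\x_i^k,\x_i^{k+1}-\x_i^*\rangle$ by polarization, I expect the per-block inequality
\[
\langle\A_i^{\dag}(\hat{\bflambda}^k-\bflambda^*),\x_i^{k+1}-\x_i^*\rangle+\tfrac{\tau_i^{(k)}}{2}\big(\|\x_i^{k+1}-\x_i^*\|^2-\|\x_i^k-\x_i^*\|^2\big)+\tfrac{\beta_k\eta_i}{2}\|\x_i^{k+1}-\x_i^k\|^2\le 0,
\]
where the $\tfrac{L_i}{2}\|\x_i^{k+1}-\x_i^k\|^2$ term has been swallowed using $\tau_i^{(k)}-L_i=T_i-L_i+\beta_k\eta_i\ge\beta_k\eta_i$.

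Next I would sum this over $i$; the inner products collapse to $\langle\hat{\bflambda}^k-\bflambda^*,\sum_i\A_i(\x_i^{k+1})-\b\rangle$ because $\sum_i\A_i(\x_i^*)=\b$. Substituting $\hat{\bflambda}^k=\bflambda^k+\beta_k\mathbf{r}^k$ and $\bflambda^{k+1}=\bflambda^k+\beta_k\mathbf{r}^{k+1}$ with $\mathbf{r}^k\equiv\sum_i\A_i(\x_i^k)-\b$, and noting $\sum_i\A_i(\x_i^{k+1}-\x_i^k)=\mathbf{r}^{k+1}-\mathbf{r}^k$, a short computation should make the cross term $\beta_k\langle\mathbf{r}^{k+1}-\mathbf{r}^k,\mathbf{r}^k\rangle$ collapse, leaving the clean combination $\tfrac{\beta_k}{2}\|\mathbf{r}^{k+1}-\mathbf{r}^k\|^2-\tfrac{\beta_k}{2}\|\mathbf{r}^k\|^2$. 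Then, bounding $\|\mathbf{r}^{k+1}-\mathbf{r}^k\|^2=\|\sum_i\A_i(\x_i^{k+1}-\x_i^k)\|^2\le n\sum_i\|\A_i\|^2\|\x_i^{k+1}-\x_i^k\|^2$ and multiplying through by $2\beta_k^{-1}$, I expect to arrive at the Fej\'er-type estimate
\[
\Phi_{k+1}\le\Phi_k-\sum_i\big(\eta_i-n\|\A_i\|^2\big)\|\x_i^{k+1}-\x_i^k\|^2-\|\mathbf{r}^k\|^2,\qquad \Phi_k\equiv\beta_k^{-2}\|\bflambda^k-\bflambda^*\|^2+\sum_i\big(T_i\beta_k^{-1}+\eta_i\big)\|\x_i^k-\x_i^*\|^2,
\]
where the telescoping is legitimate because $\{\beta_k\}$ nondecreasing makes both $\beta_k^{-2}$ and $T_i\beta_k^{-1}+\eta_i$ nonincreasing, and the residual coefficients $\eta_i-n\|\A_i\|^2$ are positive by hypothesis.

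From here the argument is standard. Since $\{\beta_k\}$ is bounded above by $\beta_{\max}$ and below by $\beta_0>0$, $\Phi_k$ is equivalent to $\|\bflambda^k-\bflambda^*\|^2+\sum_i\|\x_i^k-\x_i^*\|^2$; being nonincreasing and nonnegative it converges, so $\{\x_i^k\}$ and $\{\bflambda^k\}$ are bounded, the residual terms are summable, and hence $\|\x_i^{k+1}-\x_i^k\|\to0$, $\|\mathbf{r}^k\|\to0$, and $\|\bflambda^{k+1}-\bflambda^k\|=\beta_k\|\mathbf{r}^{k+1}\|\to0$. Taking a convergent subsequence $(\{\x_i^{k_j}\},\bflambda^{k_j})\to(\{\x_i^\infty\},\bflambda^\infty)$, we get $\x_i^{k_j+1}\to\x_i^\infty$ and $\hat{\bflambda}^{k_j}=\bflambda^{k_j}+\beta_{k_j}\mathbf{r}^{k_j}\to\bflambda^\infty$; passing to the limit in the optimality condition of the subproblem (using continuity of $\nabla g_i$, boundedness of $\tau_i^{(k)}$ so that $\tau_i^{(k_j)}(\x_i^{k_j+1}-\x_i^{k_j})\to0$, and closedness of the graph of $\partial h_i$) gives $-\A_i^{\dag}(\bflambda^\infty)\in\nabla g_i(\x_i^\infty)+\partial h_i(\x_i^\infty)=\partial f_i(\x_i^\infty)$, while $\mathbf{r}^\infty=0$ gives primal feasibility; thus $(\{\x_i^\infty\},\bflambda^\infty)$ is a KKT point. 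Finally, re-running the two displayed steps with $(\{\x_i^*\},\bflambda^*)$ replaced by $(\{\x_i^\infty\},\bflambda^\infty)$ makes the corresponding $\Phi_k$ nonincreasing, and since it tends to $0$ along $\{k_j\}$ it must tend to $0$ altogether; hence the whole sequence $(\{\x_i^k\},\bflambda^k)$ converges to the KKT point $(\{\x_i^\infty\},\bflambda^\infty)$.

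The step I expect to be the main obstacle is the Lagrange-multiplier bookkeeping in the summation: arranging things so that the cross term $\beta_k\langle\mathbf{r}^{k+1}-\mathbf{r}^k,\mathbf{r}^k\rangle$ vanishes cleanly, and then checking that $\eta_i>n\|\A_i\|^2$ and $T_i\ge L_i$ are \emph{exactly} — not merely roughly — the thresholds that keep every leftover coefficient nonnegative. Everything after the Fej\'er inequality is the classical Fej\'er-monotonicity argument, and the adaptive penalty enters only through the monotonicity of $\beta_k^{-2}$ and $T_i\beta_k^{-1}+\eta_i$, so no new difficulty arises there.
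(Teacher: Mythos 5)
Your proposal is correct and follows essentially the same route as the paper: the per-block inequality you derive (optimality condition of the linearized subproblem, descent lemma with $T_i\ge L_i$ absorbing the $\tfrac{L_i}{2}\|\x_i^{k+1}-\x_i^k\|^2$ error, KKT monotonicity, then the bound $\lbar\sum_i\A_i(\x_i^{k+1}-\x_i^k)\rbar^2\le n\sum_i\|\A_i\|^2\|\x_i^{k+1}-\x_i^k\|^2$) is exactly the paper's Proposition~\ref{prop:inequ_para}, and your Fej\'er quantity with weights $T_i\beta_k^{-1}+\eta_i=\beta_k^{-1}\tau_i^{(k)}$ and $\beta_k^{-2}$, followed by the subsequential-KKT-limit and whole-sequence Fej\'er argument, mirrors the paper's proof of Theorem~\ref{thm:general_convergence_unbounded}. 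The only cosmetic difference is that you pass to the limit in the subproblem's optimality inclusion via graph closedness of $\partial h_i$, whereas the paper takes limits in the inequality of Proposition~\ref{prop:inequ_para} with a general comparison point; both are valid and equivalent in substance.
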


We further have the following convergence rate theorem for
proximal LADMPSAP in an ergodic sense.
\begin{theorem}\label{thm:general_convergence_unbounded_rate}{\bf (Convergence Rate of Proximal LADMPSAP)}
Define $\bar\x_i^K=\sum\limits_{k=0}^K \gamma_k \x_i^{k+1}$, where
$\gamma_k=\beta_k^{-1}/\sum\limits_{j=0}^K \beta_j^{-1}$. Then the following inequality holds for $\bar{\x}_i^K$:
\begin{eqnarray}
&&\sum\limits_{i=1}^n \left(f_i(\bar\x_i^{K})-f_i(\x_i^*)+\<\A_i^{\dag} (\lambda^*),\bar\x_i^{K}-\x_i^*\>\right)+\frac{\alpha\beta_0}{2}\lbar\sum\limits_{i=1}^n\A_i(\bar\x_i^{K})-\b\rbar^2\notag\\
&\leq&C_0/\sum\limits_{k=0}^K2\beta_k^{-1},
\end{eqnarray}
where $\alpha^{-1}=(n+1) \max\left(1,
\left\{\dfrac{\|\A_i\|^2}{\eta_i-n\|\A_i\|^2},
i=1,\cdots,n\right\}\right)$ and
$C_0=\sum\limits_{i=1}^n\beta_0^{-1}\tau_i^{(0)}
\|\x_i^{0}-\x_i^*\|^2+\beta_0^{-2}\|\lambda^{0}-\lambda^*\|^2$.
\end{theorem}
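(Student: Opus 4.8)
The plan is to run the ergodic-averaging argument behind Theorem~\ref{thm:convergence_rate_2var} for plain LADMPSAP, paying in addition for the majorization slack caused by only linearizing the $C^{1,1}$ parts $g_i$. The first step is to reinterpret the subproblem \eqref{eq:general_LADMPSAP_update_xi}. Since $T_i\ge L_i$, the quadratic $\phi_i^{(k)}(\x_i)=g_i(\x_i^k)+\langle\nabla g_i(\x_i^k),\x_i-\x_i^k\rangle+\frac{T_i}{2}\|\x_i-\x_i^k\|^2$ majorizes $g_i$ and agrees with it at $\x_i^k$, and a one-line expansion shows that \eqref{eq:general_LADMPSAP_update_xi} is exactly $\x_i^{k+1}=\argmin_{\x_i}h_i(\x_i)+\phi_i^{(k)}(\x_i)+\frac{\beta_k\eta_i}{2}\|\x_i-\x_i^k+\A_i^{\dag}(\hat\bflambda^k)/(\beta_k\eta_i)\|^2$, i.e.\ a single parallel-splitting LADMPSAP step with $f_i$ replaced by the surrogate $h_i+\phi_i^{(k)}$ and the same $\eta_i$. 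This surrogate is $\tau_i^{(k)}$-strongly convex ($T_i$ from $\phi_i^{(k)}$, $\beta_k\eta_i$ from the penalty), so evaluating the strong-convexity inequality at $\x_i=\x_i^*$, expanding the penalty squares, and using $\phi_i^{(k)}(\x_i^{k+1})\ge g_i(\x_i^{k+1})$ together with $\phi_i^{(k)}(\x_i^*)\le g_i(\x_i^*)+\frac{T_i}{2}\|\x_i^*-\x_i^k\|^2$ (convexity of $g_i$) makes the $\nabla g_i(\x_i^k)$ contributions cancel and, since $\frac{T_i}{2}+\frac{\beta_k\eta_i}{2}=\frac{\tau_i^{(k)}}{2}$, yields
\[
f_i(\x_i^{k+1})-f_i(\x_i^*)+\langle\A_i^{\dag}(\bflambda^*),\x_i^{k+1}-\x_i^*\rangle\le\tfrac{\tau_i^{(k)}}{2}\big(\|\x_i^k-\x_i^*\|^2-\|\x_i^{k+1}-\x_i^*\|^2\big)-\tfrac{\beta_k\eta_i}{2}\|\x_i^{k+1}-\x_i^k\|^2+\langle\bflambda^*-\hat\bflambda^k,\A_i(\x_i^{k+1}-\x_i^*)\rangle.
\]
This is the plain-LADMPSAP estimate with $\beta_k\eta_i$ replaced by $\tau_i^{(k)}=T_i+\beta_k\eta_i$ in the distance terms, which is precisely what turns the $\eta_i$ of Theorem~\ref{thm:convergence_rate_2var} into $\beta_0^{-1}\tau_i^{(0)}$ inside $C_0$.

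Next I would sum over $i$ and eliminate the multiplier cross-terms. With $r^k=\sum_i\A_i(\x_i^k)-\b$, feasibility of $\x^*$ gives $\sum_i\A_i(\x_i^{k+1}-\x_i^*)=r^{k+1}$, while $\hat\bflambda^k=\bflambda^k+\beta_k r^k$ and $\bflambda^{k+1}=\bflambda^k+\beta_k r^{k+1}$. Substituting and applying the polarization identity twice, $\langle\bflambda^*-\hat\bflambda^k,r^{k+1}\rangle$ collapses to $\frac{1}{2\beta_k}\big(\|\bflambda^*-\bflambda^k\|^2-\|\bflambda^*-\bflambda^{k+1}\|^2\big)-\frac{\beta_k}{2}\|r^k\|^2+\frac{\beta_k}{2}\|\sum_i\A_i(\x_i^{k+1}-\x_i^k)\|^2$, the $\|r^{k+1}\|^2$ contributions cancelling. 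Bounding $\|\sum_i\A_i(\x_i^{k+1}-\x_i^k)\|^2\le n\sum_i\|\A_i\|^2\|\x_i^{k+1}-\x_i^k\|^2$ and combining with $-\frac{\beta_k\eta_i}{2}\|\x_i^{k+1}-\x_i^k\|^2$ leaves the nonnegative slack $\frac{\beta_k}{2}\big(\|r^k\|^2+\sum_i(\eta_i-n\|\A_i\|^2)\|\x_i^{k+1}-\x_i^k\|^2\big)$ on the good side, thanks to $\eta_i>n\|\A_i\|^2$.

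Multiplying by $2\beta_k^{-1}$ and setting $V_k=\sum_i\beta_k^{-1}\tau_i^{(k)}\|\x_i^k-\x_i^*\|^2+\beta_k^{-2}\|\bflambda^k-\bflambda^*\|^2$ (so $V_0=C_0$), and using that $\{\beta_k\}$ is non-decreasing — whence $\beta_{k+1}^{-1}\tau_i^{(k+1)}=\beta_{k+1}^{-1}T_i+\eta_i\le\beta_k^{-1}T_i+\eta_i=\beta_k^{-1}\tau_i^{(k)}$ and $\beta_{k+1}^{-2}\le\beta_k^{-2}$, so the step-$(k{+}1)$ distance terms dominate $V_{k+1}$ — reduces everything to absorbing $\alpha\|r^{k+1}\|^2$ into the slack. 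Writing $r^{k+1}=r^k+\sum_i\A_i(\x_i^{k+1}-\x_i^k)$ as a sum of $n+1$ vectors gives $\|r^{k+1}\|^2\le(n+1)\big(\|r^k\|^2+\sum_i\|\A_i\|^2\|\x_i^{k+1}-\x_i^k\|^2\big)$, which is $\le\alpha^{-1}$ times the slack exactly when $\alpha^{-1}=(n+1)\max\big(1,\{\|\A_i\|^2/(\eta_i-n\|\A_i\|^2)\}\big)$. This produces $2\beta_k^{-1}\sum_i[f_i(\x_i^{k+1})-f_i(\x_i^*)+\langle\A_i^{\dag}(\bflambda^*),\x_i^{k+1}-\x_i^*\rangle]+\alpha\|r^{k+1}\|^2\le V_k-V_{k+1}$. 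Summing $k=0,\dots,K$, dropping $V_{K+1}\ge0$, dividing by $2S_K$ with $S_K=\sum_{k=0}^K\beta_k^{-1}$, and then using convexity of each $f_i$ and of $\|\cdot\|^2$ together with $\beta_k^{-1}\le\beta_0^{-1}$ (so that $\frac{\alpha}{2S_K}\sum_k\|r^{k+1}\|^2\ge\frac{\alpha\beta_0}{2}\|\sum_i\A_i(\bar\x_i^K)-\b\|^2$) yields the claimed bound $\le C_0/(2S_K)$.

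I expect the main obstacle to be the cross-term elimination and the accompanying constant bookkeeping: one must choose exactly the polarization groupings for which the stray $\|r^{k+1}\|^2$ terms cancel rather than accumulate, and then verify that the single surviving slack simultaneously dominates all $n+1$ pieces of $\|r^{k+1}\|^2$ with the advertised $\alpha$ — i.e.\ that the factor $n\|\A_i\|^2$ forced by the Jacobi (parallel) update is still small enough to leave a strictly positive multiple of the feasibility residual. The extra linearization of $g_i$ contributes only the benign replacement $\beta_k\eta_i\to\tau_i^{(k)}$ and the elementary bound on $\phi_i^{(k)}(\x_i^*)$; the rest is a routine adaptation of the proof of Theorem~\ref{thm:convergence_rate_2var}.
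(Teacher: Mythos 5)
Your proposal is correct and follows essentially the same route as the paper: the per-iteration inequality you derive (via strong convexity of the majorized subproblem objective, using $T_i\ge L_i$ and convexity of $g_i$) is precisely the paper's Proposition~\ref{prop:inequ_para}, and the remaining steps — cancelling the multiplier cross-terms by polarization, absorbing $\left\|\sum_{i=1}^n\A_i(\x_i^{k+1})-\b\right\|^2$ as a sum of $n+1$ vectors into the slack with the stated $\alpha$, telescoping via the non-increase of $\beta_k^{-1}\tau_i^{(k)}$ and $\beta_k^{-2}$, and finishing with Jensen — mirror the paper's proof of Theorem~\ref{thm:general_convergence_unbounded_rate}. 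The only cosmetic differences (the surrogate/strong-convexity phrasing instead of the subgradient optimality condition plus descent lemma, and carrying $\alpha\|r^{k+1}\|^2$ with unit weight before invoking $\beta_k^{-1}\le\beta_0^{-1}$ at the very end rather than weight $\alpha\beta_0\beta_k^{-1}$ per iteration) do not change the argument or the constants.
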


When there are extra convex set constraints, $\x_i\in X_i$,
$i=1,\cdots,n$, we can also introduce auxiliary variables as in
Section~\ref{sec:PPLAMDAP} and have an analogy of
Theorems~\ref{thm:better_eta} and \ref{thm:convergence_rate_2var}.
\begin{theorem}\label{thm:better_eta_proximal}
For problem (\ref{eq:model_problem_multivar_equiv}), where $f_i$
is described at the beginning of Section~\ref{sec:G-LADMPSAP}, if
$\beta_k$ is non-decreasing and upper bounded and
$\tau_i^{(k)}=T_i+\eta_i\beta_k$, where $T_i \geq L_i$,
$T_{n+i}=0$, $\eta_i>n\|\A_i\|^2 + 2$, and $\eta_{n+i}
> 2$, $i=1,\cdots,n$, then $\{(\{\mathbf{x}_i^k\},\bflambda^k)\}$
generated by proximal LADMPSAP converge to a KKT point of problem
(\ref{eq:model_problem_multivar_equiv}). The convergence rate in
an ergodic sense is also $O\left(1/\sum\limits_{k=0}^K
\beta_k^{-1}\right)$, where $K$ is the number of iterations.
\end{theorem}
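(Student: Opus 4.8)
The plan is to regard \eqref{eq:model_problem_multivar_equiv} as a $2n$-block instance of \eqref{eq:model_problem_multivar} whose objectives carry the split structure \eqref{eq:decompose_f}: for $i=1,\dots,n$ keep $f_i=g_i+h_i$, and for the auxiliary blocks put $g_{n+i}\equiv 0$, $h_{n+i}=\chi_{X_i}$, so that $L_{n+i}=0$ and the choice $T_{n+i}=0\ (\geq L_{n+i})$ is admissible. Under this identification proximal LADMPSAP applied to \eqref{eq:model_problem_multivar_equiv} is exactly Algorithm~\ref{alg:general-LADMPSAP-multivar}, and its block-$(n+i)$ subproblem \eqref{eq:general_LADMPSAP_update_xi} collapses to the projection \eqref{eq:LADMPSAP_update_xi2}. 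Applying Theorem~\ref{thm:general_convergence_unbounded} verbatim would only give the pessimistic thresholds $\eta_i>2n\|\hat{\A}_i\|^2=2n(\|\A_i\|^2+1)$ and $\eta_{n+i}>2n$, since $\|\hat{\A}_i\|^2=\|\A_i\|^2+1$ and $\|\hat{\A}_{n+i}\|^2=1$. The substance of the theorem is to sharpen these to $\eta_i>n\|\A_i\|^2+2$ and $\eta_{n+i}>2$ by exploiting the sparsity pattern of the $\hat{\A}_i$'s in \eqref{eq:redefine_equiv}.

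Concretely, I would reopen the Fej\'{e}r-type monotonicity inequality underlying Theorem~\ref{thm:general_convergence_unbounded}. There the number of blocks and the operator norms enter only through the bound on $\lbar\hat{\A}(\x^{k+1})-\hat{\A}(\x^{k})\rbar^2$ (writing $\hat{\A}(\x)=\sum_{i=1}^{2n}\hat{\A}_i(\x_i)$), which appears when $\beta_k^{-2}\lbar\bflambda^{k+1}-\bflambda^*\rbar^2$ is expanded via \eqref{eq:update_lambda_multivar-equiv}. By the block form \eqref{eq:redefine_equiv},
\[
\lbar\hat{\A}(\x^{k+1})-\hat{\A}(\x^{k})\rbar^2=\lbar\sum\limits_{i=1}^n\A_i(\x_i^{k+1}-\x_i^{k})\rbar^2+\sum\limits_{i=1}^n\lbar(\x_i^{k+1}-\x_i^{k})-(\x_{n+i}^{k+1}-\x_{n+i}^{k})\rbar^2 .
\]
Cauchy--Schwarz bounds the first term by $n\sum_{i=1}^n\|\A_i\|^2\lbar\x_i^{k+1}-\x_i^k\rbar^2$, exactly as in the unconstrained analysis, while $\|a-b\|^2\leq 2\|a\|^2+2\|b\|^2$ bounds each summand of the second term by $2\lbar\x_i^{k+1}-\x_i^k\rbar^2+2\lbar\x_{n+i}^{k+1}-\x_{n+i}^k\rbar^2$. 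Hence the ``charge'' to the $\x_i$-block ($i\leq n$) is $n\|\A_i\|^2+2$ and that to the $\x_{n+i}$-block is $2$, in place of $2n\|\hat{\A}_i\|^2$ and $2n$. Picking $\eta_i$ strictly above these charges leaves a positive surplus $\eta_i-(n\|\A_i\|^2+2)>0$, $\eta_{n+i}-2>0$ on every block; as in Theorem~\ref{thm:general_convergence_unbounded}, the surplus on the first $n$ blocks together with $T_i\geq L_i$ absorbs the linearization error of $g_i$. The remaining steps then proceed unchanged: telescoping the monotone inequality, deducing $\lbar\x_i^{k+1}-\x_i^k\rbar\to 0$ and boundedness of $\{(\{\x_i^k\},\bflambda^k)\}$, extracting a convergent subsequence, and passing to the limit in the optimality conditions of \eqref{eq:general_LADMPSAP_update_xi} to identify a KKT point of \eqref{eq:model_problem_multivar_equiv}, after which a standard argument promotes the subsequential limit to convergence of the whole sequence. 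For the ergodic rate I would replay the weighted-average telescoping of Theorem~\ref{thm:general_convergence_unbounded_rate} on the same $2n$-block problem with the refined bound above; it yields a bound on the optimality measure of Proposition~\ref{prop:optimality} that decays like $O\!\left(1/\sum_{k=0}^K\beta_k^{-1}\right)$, with $C_0$ and $\alpha$ replaced by their $2n$-block counterparts (in terms of $\tau_i^{(0)}$, $\hat{\A}_i$, and the factor $2n+1$). Since only the order in $K$ is asserted, this is enough.

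The main obstacle is the bookkeeping in this refined Fej\'{e}r step: one has to check that, after substituting the tighter charges, the surpluses $\eta_i-(n\|\A_i\|^2+2)$ and $\eta_{n+i}-2$ still dominate \emph{all} the cross terms produced by the Jacobi (parallel) update and by linearizing $g_i$ --- that is, that the extra ``$+2$'' on blocks $1,\dots,n$ exactly pays for the coupling rows $\x_i=\x_{n+i}$ and that nothing is double-counted against the $n\|\A_i\|^2$ already spent on the $\b$-row, and that the $\x_{n+i}$-blocks, despite $\partial\chi_{X_i}$ being unbounded, need nothing beyond the charge $2$ in the bounded-$\{\beta_k\}$ regime. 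A secondary technical point is the limit argument for the auxiliary blocks: since $T_{n+i}=0$, the block-$(n+i)$ subproblem is a pure projection whose optimality condition is the inclusion $-\hat{\A}_{n+i}^{\dag}(\hat{\bflambda}^k)-\eta_{n+i}\beta_k(\x_{n+i}^{k+1}-\x_{n+i}^{k})\in\partial\chi_{X_i}(\x_{n+i}^{k+1})$, and one must verify that along the convergent subsequence the left side tends to $\bflambda_{i+1}^*$ while $\x_{n+i}^{k}\to\x_i^\infty\in X_i$, so that, combined with the limits of the other blocks' conditions, the full KKT system of \eqref{eq:model_problem_multivar_equiv} is recovered. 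This is where the standing hypotheses of Section~\ref{sec:PPLAMDAP} --- in particular that $\db$ is an interior point of $\sum_{i=1}^n\A_i(X_i)$, which guarantees that a KKT point of \eqref{eq:model_problem_multivar_equiv} exists --- enter, exactly as in the proof of Theorem~\ref{thm:better_eta}.
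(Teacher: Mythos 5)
Your plan is correct and matches the paper's intended argument: the paper proves Theorem~\ref{thm:better_eta} via exactly the splitting $\lbar\sum_{i=1}^{2n}\hat{\A}_i(\x_i^{k+1}-\x_i^k)\rbar^2=\lbar\sum_{i=1}^{n}\A_i(\x_i^{k+1}-\x_i^k)\rbar^2+\sum_{i=1}^{n}\lbar(\x_i^{k+1}-\x_i^k)-(\x_{n+i}^{k+1}-\x_{n+i}^k)\rbar^2$ with the same $n\|\A_i\|^2+2$ and $2$ ``charges,'' and it obtains Theorem~\ref{thm:better_eta_proximal} by inserting this refined bound into the proximal analysis of Theorems~\ref{thm:general_convergence_unbounded} and \ref{thm:general_convergence_unbounded_rate} with $g_{n+i}\equiv 0$, $h_{n+i}=\chi_{X_i}$, $T_{n+i}=0$, which is precisely what you propose. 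No substantive gap.
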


\section{Numerical Results}\label{sec:exp}
In this section, we test the performance of LADMPSAP on three
specific examples of problem (\ref{eq:model_problem_multivar}),
i.e., Latent Low-Rank Representation (see \eqref{eq:llrr}),
Nonnegative Matrix Completion (see \eqref{eq:nmc}), and Group
Sparse Logistic Regression with Overlap (see \eqref{eq:logit'}).

\subsection{Solving Latent Low-Rank Representation}\label{sec:solve_LLRR}
We first solve the latent LRR
problem~\cite{Liu-2011-LLRR}~\eqref{eq:llrr}. In order to test
LADMPSAP and related algorithms with data whose characteristics
are controllable, we follow \citep{Liu-2010-LRR} to generate
synthetic data, which are parameterized as ($s$, $p$, $d$,
$\tilde{r}$), where $s$, $p$, $d$, and $\tilde{r}$ are the number
of independent subspaces, points in each subspace, and ambient and
intrinsic dimensions, respectively. The number of scale variables
and constraints is $(sp)\times d$.

As first order methods are popular for solving convex programs in
machine learning~\citep{Boyd-2011-Distributed}, here we compare
LADMPSAP with several conceivable first order algorithms,
including APG~\citep{Beck2009}, naive ADM, naive LADM, LADMGB, and
LADMPS. Naive ADM and naive LADM are generalizations of ADM and
LADM, respectively, which are straightforwardly generalized from
two variables to multiple variables, as discussed in
Section~\ref{sec:LADM+PSAP}. Naive ADM is applied to solve
 (\ref{eq:llrr}) after rewriting the constraint of (\ref{eq:llrr}) as $\bx =
\bx\mathbf{P} + \mathbf{Q}\bx+ \be, \mathbf{P}=\bz,
\mathbf{Q}=\bl$. For LADMPS, $\beta_k$ is fixed in order to show
the effectiveness of adaptive penalty.
The parameters of APG and ADM are the same as those
in~\citep{Lin-09-APG} and~\citep{Liu-2011-LLRR}, respectively. For
LADM, we follow the suggestions in \citep{Yang-2011-LADM} to fix
its penalty parameter $\beta$ at $2.5/\min(d,sp)$, where $d\times
sp$ is the size of $\bx$. For LADMGB, as there is no suggestion
in~\cite{He-11-LADM_Gauss} on how to choose a fixed $\beta$, we
simply set it the same as that in LADM. The rest of the parameters are
the same as those suggested in \citep{He-2011-Gaussian}. We fix $\beta =
\sigma_{\max}(\mathbf{X})\min(d,sp)\varepsilon_2$ in LADMPS and
set $\beta_0 = \sigma_{\max}(\mathbf{X})\min(d,sp)\varepsilon_2$
and $\rho_0 = 10$ in LADMPSAP. For LADMPSAP, we also set
$\eta_Z=\eta_L=1.02\times 3\sigma_{\max}^2(\mathbf{X})$, where
$\eta_Z$ and $\eta_L$ are the parameters $\eta_i$'s in
Algorithm~\ref{alg:LADMPSAP-multivar} for $\bz$ and $\bl$,
respectively. For the stopping criteria, $\|\bx\bz^k+\bl^k\bx +
\be^k-\bx\|/\|\bx\|\leq\varepsilon_1$ and
$\max(\|\bz^{k}-\bz^{k-1}\|,\|\bl^{k}-\bl^{k-1}\|,\|\be^{k}-\be^{k-1}\|)/\|\bx\|\leq\varepsilon_2$,
with $\varepsilon_1=10^{-3}$ and $\varepsilon_2=10^{-4}$ are used
for all the algorithms. For the parameter $\mu$ in
(\ref{eq:llrr}), we empirically set it as $\mu = 0.01$. To measure
the relative errors in the solutions we run LADMPSAP 2000
iterations with $\rho_0 = 1.01$ to obtain the estimated ground
truth solution ($\mathbf{Z}^*, \mathbf{L}^*, \mathbf{E}^*$). The
experiments are run and timed on a notebook computer with an Intel
Core i7 2.00 GHz CPU and 6GB memory, running Windows 7 and Matlab 7.13.

Table~\ref{tab:latlrr} shows the results of related algorithms. We
can see that LADMPS and LADMPSAP are faster and more numerically
accurate than LADMGB, and LADMPSAP is even faster than LADMPS
thanks to the adaptive penalty. Moreover, naive ADM and naive LADM
have relatively poorer numerical accuracy, possibly due to
converging to wrong solutions. The numerical accuracy of APG is
also worse than those of LADMPS and LADMPSAP because it only
solves an approximate problem by adding the constraint to the
objective function as penalty. Note that although we do not
require $\{\beta_k\}$ to be bounded, this does not imply that
$\beta_k$ will grow infinitely. As a matter of fact, when LADMPSAP
terminates the final values of $\beta_k$ are $21.1567$, $42.2655$,
and $81.4227$ for the three data settings, respectively.

We then test the performance of the above six algorithms on the
Hopkins155 database \citep{Tron-2007-Hopkins}, which consists of
156 sequences, each having 39 to 550 data vectors drawn from two
or three motions. For computational efficiency, we preprocess the
data by projecting them to be 5-dimensional using PCA. We test all
algorithms with $\mu = 2.4$, which is the best parameter for LRR
on this database \citep{Liu-2010-LRR}. \invisible{We set
$\varepsilon_1=10^{-3}$ and $\varepsilon_2=10^{-4}$. For LADMPSAP,
we set $\beta_0 = 0.005/\min(d,sp)$ and $\rho_0 = 1.9$. For LADM,
LADMGB and LADMPS, we set $\beta=0.05/\min(d,sp)$.}Table
{\ref{tab:latlrr_rel}} shows the results on the Hopkins155
database. We can also see that LADMPSAP is faster than other
methods in comparison. In particular, LADMPSAP is faster than
LADMPS, which uses a fixed $\beta$. This testify to the advantage
of using an adaptive penalty.

\invisible{
\begin{table*}[th]
\begin{center}
\caption{Comparison of APG, naive ADM (nADM), naive LADM (nLADM),
LADMGB, LADMPS and LADMPSAP on the latent LRR problem
(\ref{eq:llrr}). The quantities include computing time (in
seconds), number of iterations, relative errors, and clustering
accuracy (in percentage). They are averaged over $10$
runs.}\label{tab:latlrr}
\end{center}
\end{table*}
}

\begin{table*}[t]
\caption{Comparisons of APG, naive ADM (nADM), naive LADM (nLADM),
LADMGB, LADMPS, and LADMPSAP on the latent LRR problem
(\ref{eq:llrr}). The quantities include computing time (in
seconds), number of iterations, relative errors, and clustering
accuracy (in percentage). They are averaged over $10$
runs.}\label{tab:latlrr}
\begin{center}
\begin{tabular}{c|c||c|c|c|c|c||c}\hline
($s$, $p$, $d$, $\tilde{r}$) & Method & Time &  \#Iter. &
$\frac{\|\hat{\mathbf{Z}}-\mathbf{Z}^*\|}{\|\mathbf{Z}^*\|}$ &
$\frac{\|\hat{\mathbf{L}}-\mathbf{L}^*\|}{\|\mathbf{L}^*\|}$ &
$\frac{\|\hat{\mathbf{E}}-\mathbf{E}^*\|}{\|\mathbf{E}^*\|}$ &
Acc.
\\\hline\hline
\multirow{6}{*}{(5, 50, 250, 5)}
& APG       & 18.20         & 236          & 0.3389            & 0.3167          & 0.4500             & 95.6  \\
& nADM       & 16.32         & 172          & 0.3993            & 0.3928          & 0.5592             & 95.6  \\
& nLADM      & 21.34         & 288          & 0.4553            & 0.4408          & 0.5607             & 95.6  \\
& LADMGB     & 24.10         & 290          & 0.4520            & 0.4355          & 0.5610             & 95.6  \\
& LADMPS    & 17.15         & 232          & 0.0163            & 0.0139          & \textbf{0.0446}    & 95.6  \\
& LADMPSAP  & \textbf{8.04} & \textbf{109}  & \textbf{0.0089}   &
\textbf{0.0083}& 0.0464             & 95.6  \\\hline
\multirow{6}{*}{(10, 50, 500, 5)}
& APG       & 85.03          & 234          & 0.1020            & 0.0844         & 0.7161            & 95.8  \\
& nADM       & 78.27          & 170          & 0.0928            & 0.1026         & 0.6636            & 95.8  \\
& nLADM      & 181.42         & 550          & 0.2077            & 0.2056         & 0.6623            & 95.8  \\
& LADMGB     & 214.94         & 550          & 0.1877            & 0.1848         & 0.6621            & 95.8  \\
& LADMPS    & 64.65          & 200          & 0.0167            & 0.0089         & 0.1059            & 95.8  \\
& LADMPSAP  & \textbf{37.85} & \textbf{117} & \textbf{0.0122}   &
\textbf{0.0055}& \textbf{0.0780}   & 95.8  \\\hline
\multirow{6}{*}{(20, 50, 1000, 5)}
& APG       & 544.13          & 233          & 0.0319            & 0.0152            & 0.2126           & 95.2  \\
& nADM       & 466.78          & 166          & 0.0501            & 0.0433            & 0.2676           & 95.2  \\
& nLADM      & 1888.44         & 897          & 0.1783            & 0.1746            & 0.2433           & 95.2  \\
& LADMGB     & 2201.37         & 897          & 0.1774            & 0.1736            & 0.2434           & 95.2  \\
& LADMPS    & 367.68          & 177          & 0.0151            & 0.0105            & 0.0872           & 95.2  \\
& LADMPSAP  & \textbf{260.22} & \textbf{125} & \textbf{0.0106}   &
\textbf{0.0041}   & \textbf{0.0671}  & 95.2  \\\hline
\end{tabular}
\end{center}
\end{table*}

\begin{table*}[t]
\caption{Comparisons of APG, naive ADM (nADM), naive LADM (nLADM),
LADMGB, LADMPS, and LADMPSAP on the Hopkins155 database. The
quantities include average computing time, average number of
iterations, and average classification errors on all 156
sequences.}\label{tab:latlrr_rel}
\begin{center}
\begin{tabular}{c||c|c||c}\hline
 Method & Time (seconds) &  \#Iteration & Error (\%)\\
\hline\hline
 APG        & 10.37         & 67            & 8.33  \\
 nADM       & 24.76         & 144           & 8.33  \\
 nLADM      & 15.50         & 112            & 8.33  \\
 LADMGB     & 16.05         & 113            & 8.36  \\
 LADMPS     & 15.58         & 113           & 8.33  \\
 LADMPSAP   & \textbf{3.80} & \textbf{26}   & 8.33  \\\hline
\end{tabular}
\end{center}
\end{table*}

\subsection{Solving Nonnegative Matrix Completion}

This subsection evaluates the performance of the practical
LADMPSAP proposed in Section~\ref{sec:PPLAMDAP} for solving
nonnegative matrix completion~\citep{Xu-2011-NMF} \eqref{eq:nmc}.


We first evaluate the numerical performance on synthetic data to
demonstrate the superiority of practical LADMPSAP over the
conventional LADM\footnote{Code available at
\url{http://math.nju.edu.cn/~jfyang/IADM_NNLS/index.html}}
\citep{Yang-2011-LADM}. The nonnegative low-rank matrix $\bx_0$ is
generated by truncating the singular values of a randomly
generated matrix. As LADM cannot handle the nonnegativity
constraint, it actually solve the standard matrix completion
problem, i.e., (\ref{eq:nmc}) without the nonnegativity
constraint. For LADMPSAP, we follow the conditions in
Theorem~\ref{thm:better_eta} to set $\eta_i$'s and set the rest of
the parameters the same as those in Section~\ref{sec:solve_LLRR}.
The stopping tolerances are set as $\varepsilon_1 = \varepsilon_2
= 10^{-5}$. The numerical comparison is shown in
Table~\ref{tab:nmc_syn}, where the relative nonnegative
feasibility (FA) is defined as \citep{Xu-2011-NMF}:
\begin{equation*}
\mbox{FA} := \|\min(\hat{\bx},0)\|/\|\bx_0\|,
\end{equation*}
in which $\X_0$ is the ground truth and $\hat{\bx}$ is the
computed solution. It can be seen that the numerical performance
of LADMPSAP is much better than that of LADM, thus again verifies
the efficiency of our proposed parallel splitting and adaptive
penalty scheme for enhancing ADM/LADM type algorithms.
\begin{table}[t]
\begin{center}
\caption{Comparisons on the NMC problem (\ref{eq:nmc}) with
synthetic data, averaged on $10$ runs. $q$, $t$, and $d_r$ denote,
respectively, the sample ratio, the number of measurements $t =
q(mn)$, and the ``degree of freedom'' defined by $d_r = r(m+n-r)$
for an $m\times n$ matrix with rank $r$ and $q$. Here we set $m =
n$ and fix $r = 10$ in all the tests.}\label{tab:nmc_syn}
\vspace{1em}
\begin{tabular}{c|c|c||c|c|c|c||c|c|c|c}\hline
\multicolumn{3}{c||}{$\bx$} & \multicolumn{4}{c||}{LADM} & \multicolumn{4}{c}{LADMPSAP}\\\hline
$n$ & $q$ & $t/d_r$ & \#Iter. & Time (s) & RelErr & FA & \#Iter. & Time (s) & RelErr & FA \\\hline\hline
\multirow{2}{*}{1000}
& 20$\%$ & 10.05 & 375  & 177.92 & 1.35E-5 & 6.21E-4 & 58 & {\bf 24.94} & {\bf 9.67E-6} & {\bf 0} \\
& 10$\%$ & 5.03  & 1000 & 459.70 & 4.60E-5 & 6.50E-4 & 109& {\bf
42.68} & {\bf 1.72E-5} & {\bf 0}\\\hline
\multirow{2}{*}{5000}
& 20$\%$ & 50.05 & 229  & 1613.68 & 1.08E-5 & 1.93E-4 & 49 & {\bf 369.96} & {\bf 9.05E-6} & {\bf 0} \\
& 10$\%$ & 25.03 & 539  & 2028.14 & 1.20E-5 & 7.70E-5 & 89 & {\bf
365.26} & {\bf 9.76E-6} & {\bf 0}\\\hline 10000 & 10$\%$ & 50.03 &
463 & 6679.59 & 1.11E-5 & 4.18E-5 & 89 & {\bf 1584.39} & {\bf
1.03E-5} & {\bf 0}
\\\hline
\end{tabular}
\end{center}
\end{table}

We then consider the image inpainting problem, which is to fill in
the missing pixel values of a corrupted image. As the pixel values
are nonnegative, the image inpainting problem can be formulated as
the NMC problem. To prepare a low-rank image, we also truncate the
singular values of a $1024 \times 1024$ grayscale image
``man''\footnote{Available at \url{http://sipi.usc.edu/database/}}
to obtain an image of rank 40, shown in Fig.~\ref{fig:nmc}
(a)-(b). The corrupted image is generated from the original image (all pixels have been normalized in the range of [0, 1])
by sampling $20\%$ of the pixels uniformly at random and adding
Gaussian noise with mean zero and standard deviation 0.1.

Besides LADM, here we also consider another recently proposed
fixed point continuation with approximate SVD (FPCA
\citep{Ma-2008-FPC}) on this problem. Similar to LADM, the code of
FPCA\footnote{Code available at
\url{http://www1.se.cuhk.edu.hk/~sqma/softwares.html}} can only
solve the standard matrix completion problem without the
nonnegativity constraint. This time we set $\varepsilon_1 =
10^{-3}$ and $\varepsilon_2 = 10^{-1}$ as the thresholds for
stopping criteria. The recovered images are shown in
Fig.~\ref{fig:nmc} (c)-(e) and the quantitative results are in
Table~\ref{tab:nmc}. One can see that on our test image both the
qualitative and the quantitative results of LADMPSAP are better
than those of FPCA and LADM. Note that LADMPSAP is faster than
FPCA and LADM even though they do not handle the nonnegativity
constraint.

\invisible{
Then we run two recently proposed algorithms (i.e., fixed point continuation with approximate SVD (FPCA)\footnote{The
Matlab code is provided by authors of \citep{Ma-2008-FPC}}
and LADM\footnote{The
Matlab code is provided by authors of \citep{Yang-2011-LADM}. })\footnote{As
the code cannot handle the nonnegativity constraint, it actually
solves the standard matrix completion problem, i.e.,
(\ref{eq:nmc}) without the nonnegativity constraint.} and LADMPSAP
on this image to compare their performance on the nonnegative
matrix completion problem. }

\invisible{
\begin{figure}[th]
\caption{Image inpainting by FPCA, LADM, and
LADMPSAP.}\label{fig:nmc}
\end{figure}
}
\invisible{
\begin{figure}[th]
\begin{center}
\subfigure[The original and corrupted images]{
\begin{tabular}{c@{\extracolsep{0.2em}}c}
\includegraphics[width=0.3\textwidth,
keepaspectratio]{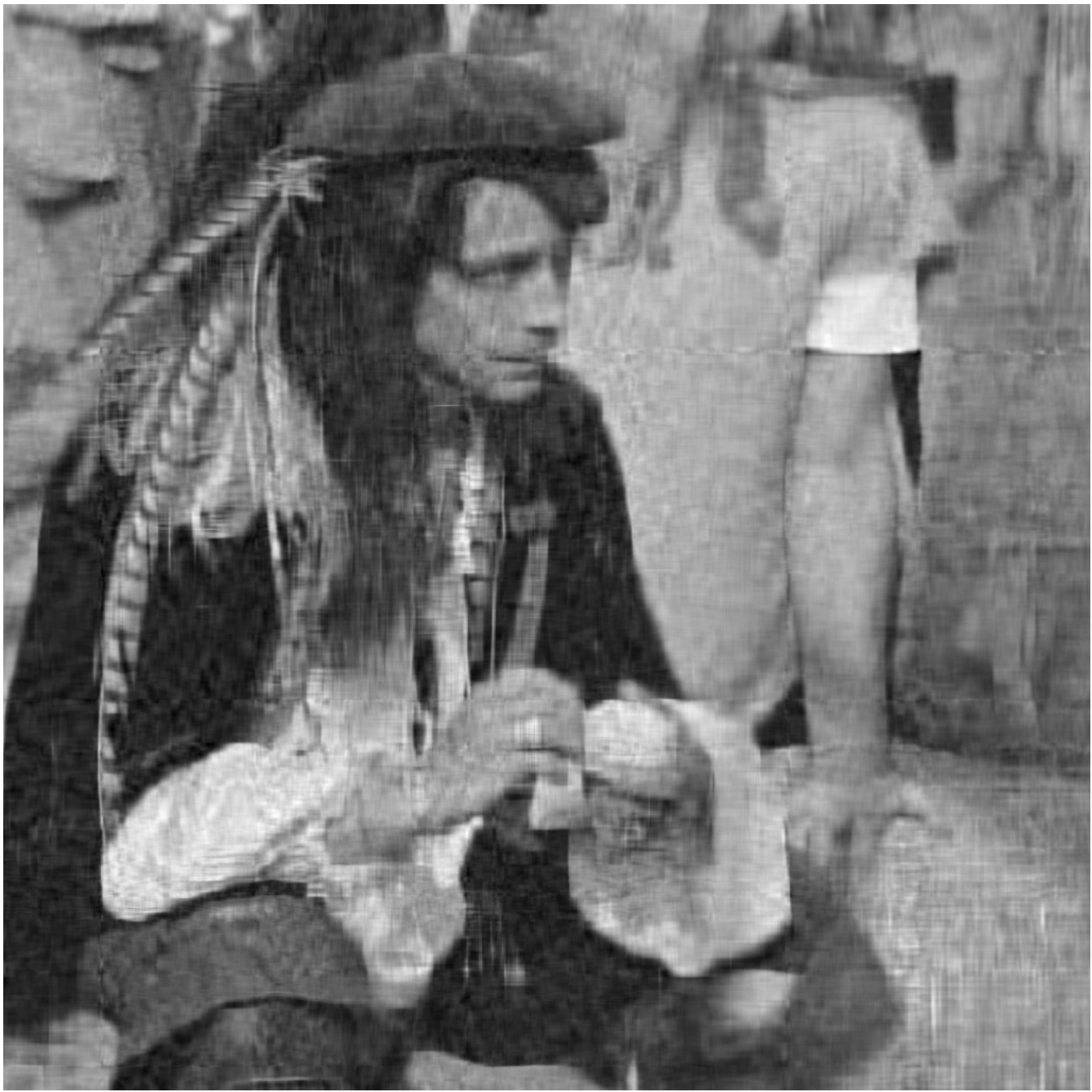}
&\includegraphics[width=0.3\textwidth,
keepaspectratio]{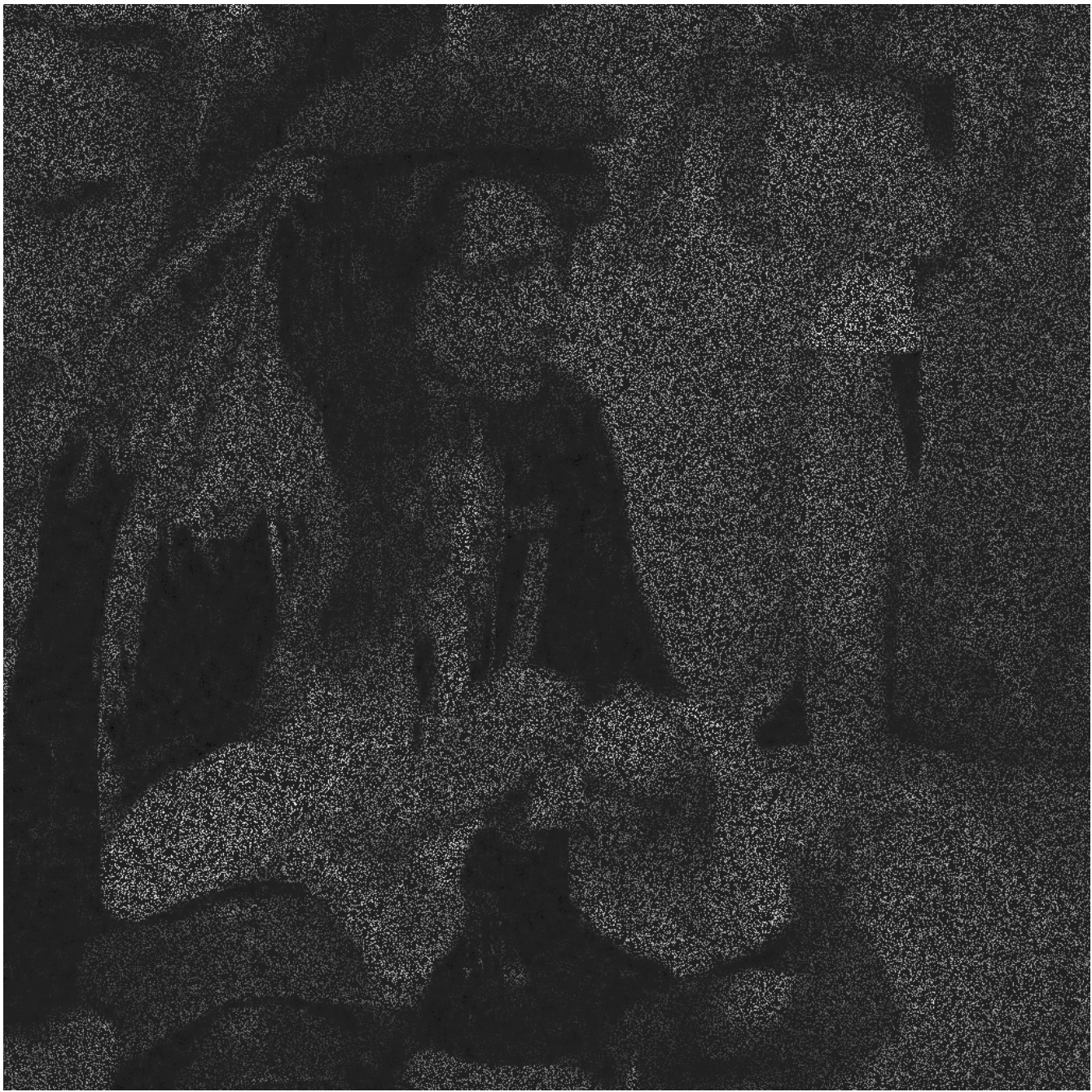}\\
 Original & Corrupted\\
\end{tabular}
}
\subfigure[The inpaining results]{
\begin{tabular}{c@{\extracolsep{0.2em}}c@{\extracolsep{0.2em}}c}
\includegraphics[width=0.3\textwidth,
keepaspectratio]{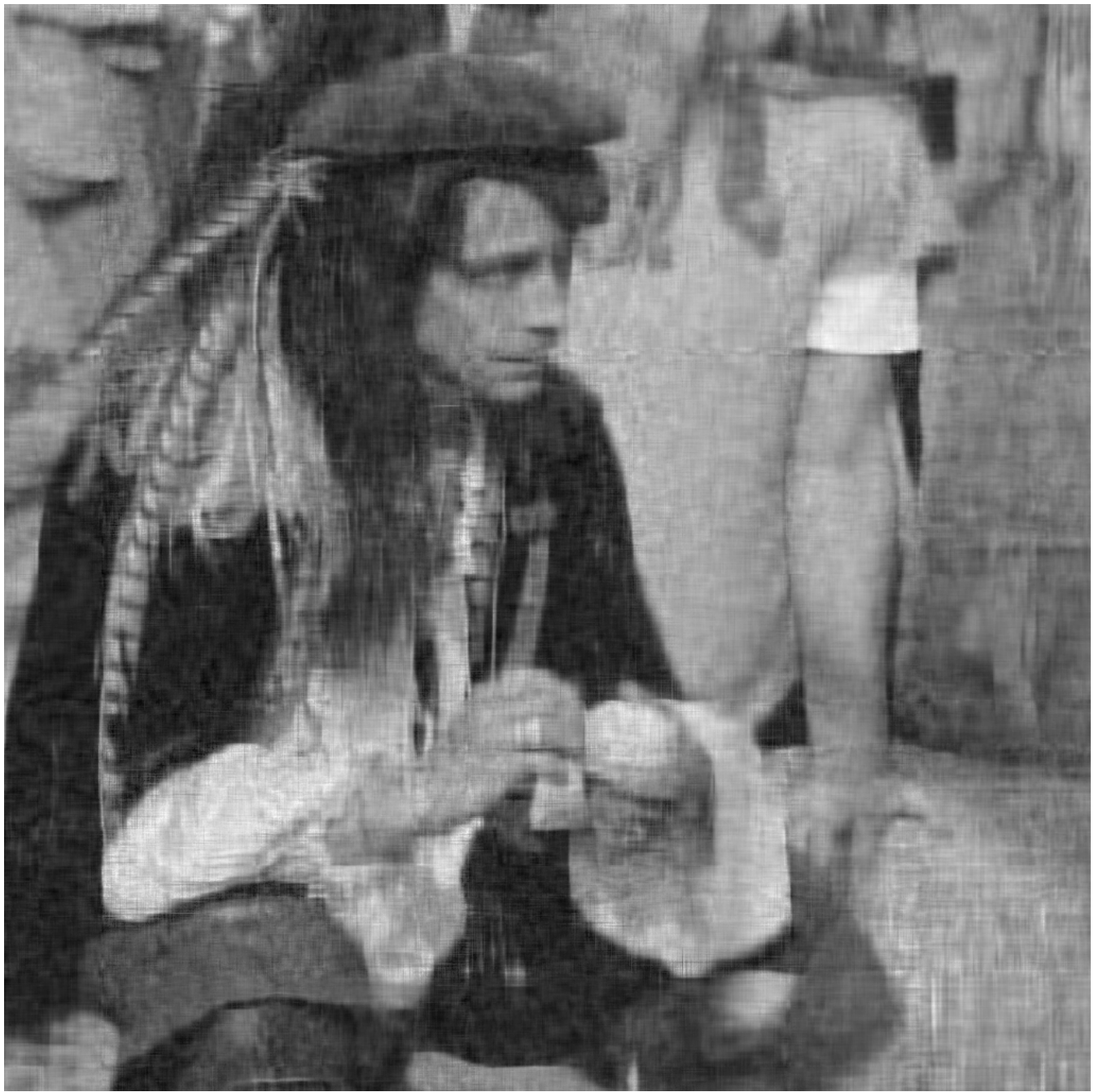}
&\includegraphics[width=0.3\textwidth,
keepaspectratio]{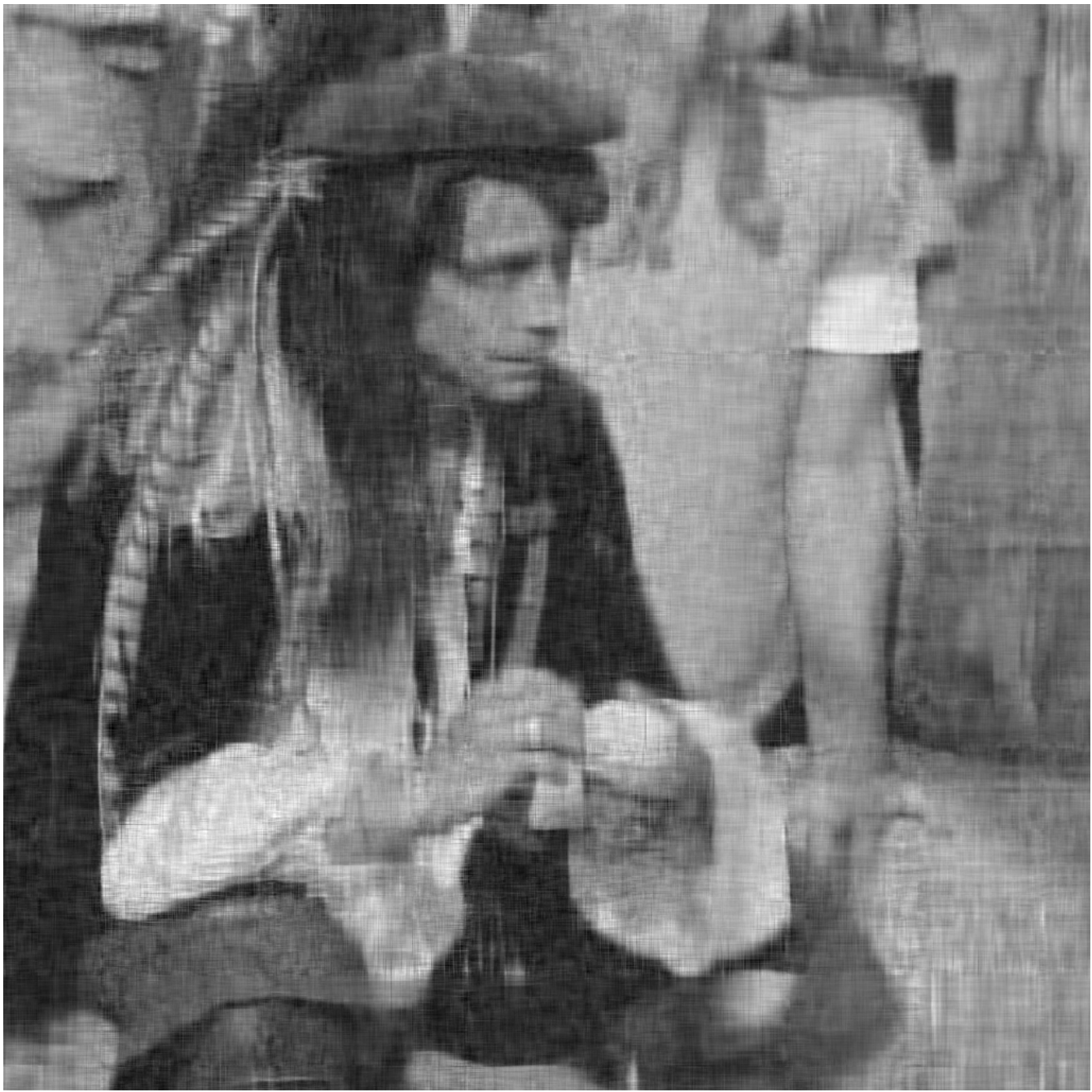}
&\includegraphics[width=0.3\textwidth,
keepaspectratio]{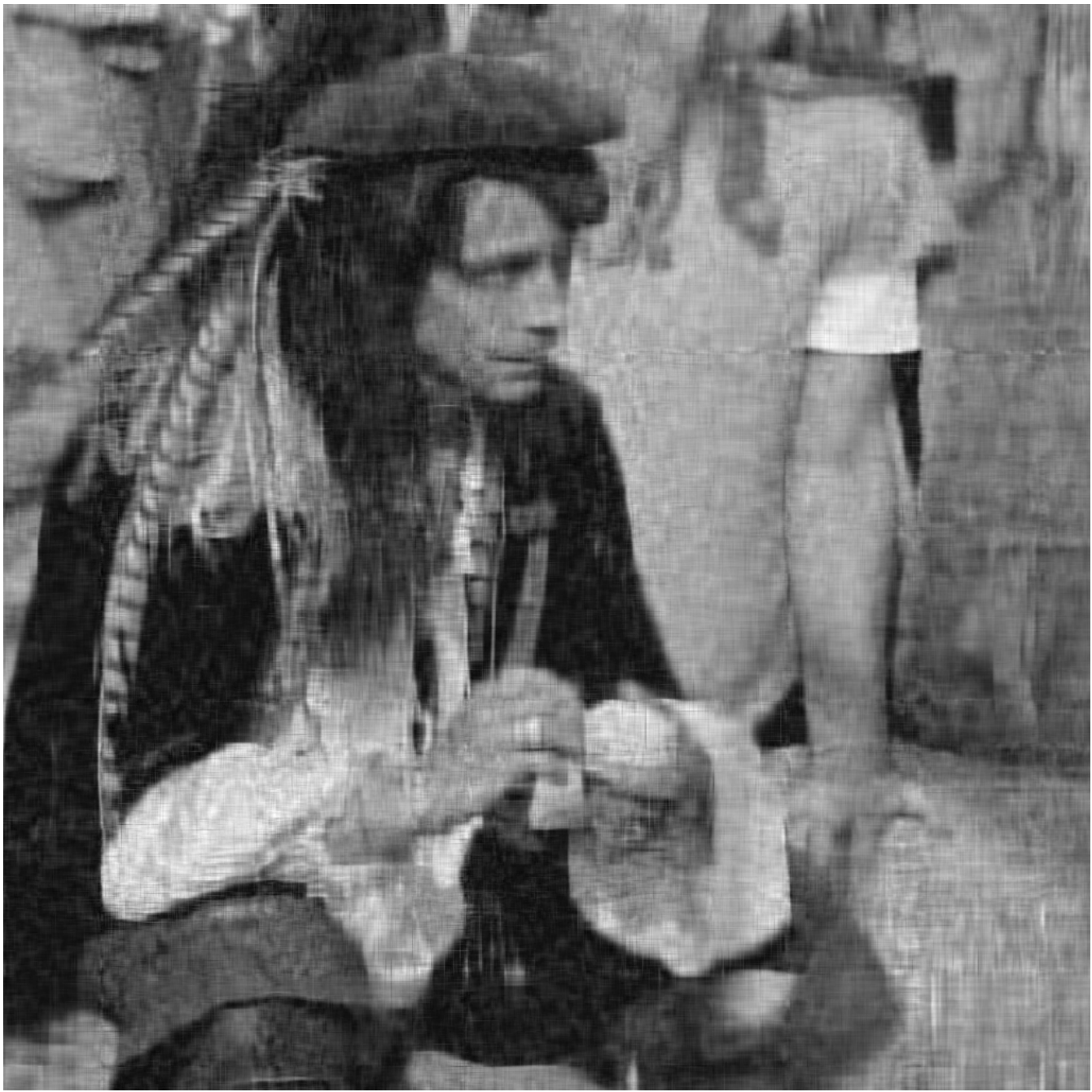}\\
FPCA & LADM & LADMPSAP\\
\end{tabular}
}
\end{center}
\caption{Image inpainting by FPCA, LADM, and
LADMPSAP.}\label{fig:nmc}
\end{figure}
}

\begin{figure}[t]
\centering
\begin{tabular}{c@{\extracolsep{0.2em}}c@{\extracolsep{0.2em}}c@{\extracolsep{0.2em}}c@{\extracolsep{0.2em}}c}
\includegraphics[width=0.2\textwidth,
keepaspectratio]{figure/man_ori}
&\includegraphics[width=0.2\textwidth,
keepaspectratio]{figure/man_miss}
&\includegraphics[width=0.2\textwidth,
keepaspectratio]{figure/man_svt}
&\includegraphics[width=0.2\textwidth,
keepaspectratio]{figure/man_ladm}
&\includegraphics[width=0.2\textwidth,
keepaspectratio]{figure/man_ladmap}\\
(a) Original & (b) Corrupted & (c) FPCA & (d) LADM & (e) LADMPSAP
\end{tabular}
\caption{Image inpainting by FPCA, LADM and
LADMPSAP.}\label{fig:nmc}
\end{figure}


\invisible{
\begin{table}[th]
\caption{Numerical comparison on the image inpainting problem
(nonnegative matrix completion). }\label{tab:nmc}
\end{table}
}

\begin{table}[t]
\begin{center}
\caption{Comparisons on the image inpainting problem. ``PSNR"
stands for ``Peak Signal to Noise Ratio" measured in decibel
(dB).}\label{tab:nmc} \vspace{1em}
\begin{tabular}{c||c|c|c|c}\hline
Method & \#Iter. & Time (s) & PSNR (dB) & FA\\\hline\hline
{\small FPCA} & 179 & 228.99 &  27.77 & 9.41E-4 \\
{\small LADM} & 228 & 207.95 &  26.98 & 2.92E-3 \\
{\small LADMPSAP} & 143 & {\bf 134.89} & {\bf 31.39} & {\bf
0}\\\hline
\end{tabular}
\end{center}
\end{table}

\subsection{Solving Group Sparse Logistic Regression with
Overlap}\label{sec:exp_pLADMPSAP} In this subsection, we apply
proximal LADMPSAP to solve the problem of group sparse logistic
regression with overlap \eqref{eq:logit}.

The Lipschitz constant of the gradient of logistic function with
respect to $\bar{\w}$ can be proven to be $L_{\bar{w}}\leq
\frac{1}{4s}\|\bar{\bx}\|_2^2$, where
$\bar{\bx}=(\bar{\x}_1,\bar{\x}_2,\cdots,\bar{\x}_s)$. Thus
\eqref{eq:logit} can be directly solved by
Algorithm~\ref{alg:general-LADMPSAP-multivar}.

\begin{table*}[t]
\caption{Comparisons among ADM, LADM, LADMPS, LADMPSAP, and
proximal LADMPSAP (pLADMPSAP) on the group sparse logistic
regression with overlap problem. The quantities include the
computing time (in seconds), number of outer iterations, and
relative errors.}\label{table-glc}
\begin{center}
\begin{tabular}{c|c||c|c|c|c}\hline
($s$, $p$, $t$, $q$) & Method & Time &  \#Iter. &
$\frac{\|\hat{\bar{\w}}-\bar{\w}^*\|}{\|\bar{\w}^*\|}$ &
$\frac{\|\hat{\z}-\z^*\|}{\|\z^*\|}$
\\\hline\hline
\multirow{6}{*}{(300, 901, 100, 10)}
& ADM       & 294.15         & 43          & 0.4800            & 0.4790                 \\
& LADM      & 229.03         &43           & 0.5331            & 0.5320              \\
& LADMPS    & 105.50         & 47          & 0.2088            & 0.2094                 \\
& LADMPSAP  & 57.46          & 39          & 0.0371            & 0.0368               \\
& pLADMPSAP & {\bf 1.97}     & 141         & {\bf 0.0112}      &
{\bf 0.0112}
\\\hline \multirow{6}{*}{(450, 1351, 150, 15)}
& ADM       & 450.96         & 33          & 0.4337            & 0.4343             \\
& LADM      & 437.12         &36           & 0.5126            & 0.5133              \\
& LADMPS    & 201.30         & 39          & 0.1938            & 0.1937            \\
& LADMPSAP  & 136.64         & 37          & 0.0321            & 0.0306            \\
& pLADMPSAP & {\bf 4.16}     & 150         & {\bf 0.0131}      &
{\bf 0.0131}
\\\hline \multirow{6}{*}{(600, 1801, 200, 20)}
& ADM       & 1617.09        & 62          & 1.4299            & 1.4365      \\
& LADM      & 1486.23        &63           & 1.5200            & 1.5279              \\
& LADMPS    & 494.52         & 46          & 0.4915            & 0.4936        \\
& LADMPSAP  & 216.45         & 32          & 0.0787            & 0.0783            \\
& pLADMPSAP & {\bf 5.77}     & 127         & {\bf 0.0276}      &
{\bf 0.0277}
\\\hline
\end{tabular}
\end{center}
\end{table*}

\subsubsection{Synthetic Data}
To assess the performance of proximal LADMPSAP, we simulate data
with $p=9t+1$ variables, covered by $t$ groups of ten variables
with overlap of one variable between two successive groups:
$\{1,\cdots,10\}$, $\{10,\cdots,19\}$, $\cdots$,
$\{p-9,\cdots,p\}$. We randomly choose $q$ groups to be the
support of $\w$. If the chosen groups have overlapping variables
with the unchosen groups, the overlapping variables are removed
from the support of $\w$. So the support of $\w$ may be less than
$10q$. $\y=(y_1,\cdots,y_s)^T$ is chosen as
$(1,-1,1,-1,\cdots)^T$. $\bx\in\mathbb{R}^{p\times s}$ is
generated as follows. For $\bx_{i,j}$, if $i$ is in the support of
$\w$ and $\y_j=1$, then $\bx_{i,j}$ is generated uniformly on
$[0.5, 1.5]$; if $i$ is in the support of $\w$ and $\y_j=-1$, then
$\bx_{i,j}$ is generated uniformly on $[-1.5, -0.5]$; if $i$ is
not in the support of $\w$, then $\bx_{i,j}$ is generated
uniformly on $[-0.5, 0.5]$. Then the rows whose indices are in the
support of $\w$ are statistically different from the remaining
rows in $\bx$, hence can be considered as informative rows. We use
model (\ref{eq:logit'}) to select the informative rows for
classification, where $\mu=0.1$. If the ground truth support of
$\w$ is recovered, then the two groups of data are linearly
separable by considering only the coordinates in the support of
$\w$.

We compare proximal LADMPSAP with a series of ADM based methods,
including ADM, LADM, LADMPS, and LADMPSAP, where the subproblems
for $\w$ and $\b$ have to be solved iteratively, e.g., by
APG~\citep{Beck2009}. We terminate the inner loop by APG when the
norm of gradient of the objective function of the subproblem is
less than $10^{-6}$. As for the outer loop, we choose
$\varepsilon_1=2\times 10^{-4}$ and $\varepsilon_2=2\times
10^{-3}$ as the thresholds to terminate the iterations.

For ADM, LADM, and LADMPS, which use a fixed penalty $\beta$, as
we do not find any suggestion on its choice in the literature (the
choice suggested in \citep{Yang-2011-LADM} is for nuclear norm
regularized least square problem only) we try multiple choices of
$\beta$ and choose the one that results in the fastest
convergence. For LADMPSAP, we set $\beta_0=0.2$ and $\rho_0=5$.
For proximal LADMPSAP we set $T_1=\frac{1}{4s}\|\bar{\bx}\|_2^2$,
$\eta_1=2.01\|\bar{\bs}\|_2^2$, $T_2=0$, $\eta_2=2.01$,
$\beta_0=1$, and $\rho_0=5$. To measure the relative errors in the
solutions we iterate proximal LADMPSAP for 2,000 times and regard
its output as the ground truth solution $(\bar{\w}^*,\z^*)$.

Table \ref{table-glc} shows the comparison among related
algorithms. The ground truth support of $\w$ is recovered by all
the compared algorithms. We can see that ADM, LADM, LADMPS, and
LADMPSAP are much slower than proximal LADMPSAP because of the
time-consuming subproblem computation, although they have much
smaller number of outer iterations. Their numerical accuracies are
also inferior to that of proximal LADMPSAP. We can also see that
LADMPSAP is faster and more numerically accurate than ADM, LADM,
and LADMPS. This again testifies to the effectiveness of using
adaptive penalty.

\begin{table*}[t]
\begin{center}
\caption{Comparisons among the Active Set
method~\citep{Jacob-2009-Group-LASSO}, LADM, LADMPSAP, and
proximal LADMPSAP (pLADMPSAP) on the pathway analysis. We present
the CPU time (in seconds), classification error rate, and number of
pathways. Results are estimated by three-fold cross validation.
\#Pathway gives the number of pathways that the selected genes
belong to in each of the cross validation.\label{table-gene}}
\begin{tabular}{c||c|c|c}\hline
Method & Time & Error & \#Pathway \\\hline\hline Active Set &
2179 & $0.36\pm0.03$ &6, 5, 78\\
LADM & 2433 & $0.315\pm0.049$ &7, 9, 10\\
LADMPSAP & 1593 & $0.329\pm0.011$ &7, 9, 9\\
pLADMPSAP &  {\bf 179} &{\bf $0.312\pm0.026$} &  4, 6, 6\\\hline
\end{tabular}
\end{center}
\end{table*}

\subsubsection{Pathway Analysis on Breast Cancer Data}
Then we consider the pathway analysis problem using the breast
cancer gene expression data set~\citep{Van-2002-Breast}, which
consists of 8141 genes in 295 breast cancer tumors (78 metastatic
and 217 non-metastatic). We follow~\citet{Jacob-2009-Group-LASSO}
and use the canonical pathways from
MSigDB~\citep{Subramanian-2005-MSigDB} to generate the overlapping
gene sets, which contains 639 groups of genes, 637 of which
involve genes from our study. The statistics of the 637 gene
groups are summarized as follows: the average number of genes in
each group is 23.7, the largest gene group has 213 genes, and 3510
genes appear in these 637 groups with an average appearance
frequency of about four. We follow~\citet{Jacob-2009-Group-LASSO}
to restrict the analysis to the 3510 genes and balance the data
set by using three replicates of each metastasis patient in the
training set. We use model (\ref{eq:logit'}) to select genes,
where $\mu=0.08$. We want to predict whether a tumor is metastatic
($y_i=1$) or non-metastatic ($y_i=-1$).

We compare proximal LADMPSAP with the active set method, which was
adopted in~\citep{Jacob-2009-Group-LASSO}\footnote{Code available
at
\url{http://cbio.ensmp.fr/~ljacob/documents/overlasso-package.tgz}.},
LADM, and LADMPSAP. In LADMPSAP and proximal LADMPSAP, we both set
$\beta_0=0.8$ and $\rho_0=1.1$. For LADM, we try multiple choices
of $\beta$ and choose the one that results in the fastest
convergence. In LADM and LADMPSAP, we terminate the inner loop by
APG when the norm of gradient of the objective function of the
subproblem is less than $10^{-6}$. The thresholds for terminating
the outer loop are all chosen as $\varepsilon_1=10^{-3}$ and
$\varepsilon_2=6\times 10^{-3}$. For the three LADM based methods,
we first solve \eqref{eq:logit'} to select genes. Then we use the
selected genes to re-train a traditional logistic regression model
and use the model to predict the test samples. As
in~\citep{Jacob-2009-Group-LASSO} we partition the whole data set
into three subsets to do the experiment three times. Each time we
select one subset as the test set and the other two as the
training set (i.e., there are $(78+217)\times 2/3=197$ samples for
training). It is worth mentioning
that~\citet{Jacob-2009-Group-LASSO} only kept the 300 genes that
are the most correlated with the output in the pre-processing
step. In contrast, we use all the 3510 genes in the training
phase.

Table \ref{table-gene} shows that proximal LADMPSAP is more than
ten times faster than the active set method used
in~\citep{Jacob-2009-Group-LASSO}, although it computes with a
more than ten times larger training set. Proximal LADMPSAP is also
much faster than LADM and LADMPSAP due to the lack of inner loop
to solve subproblems. The prediction error and the sparseness at
the pathway level by proximal LADMPSAP is also competitive with
those of other methods in comparison.

\section{Conclusions}\label{sec:con}

In this paper, we propose linearized alternating direction method
with parallel splitting and adaptive penalty (LADMPSAP) for
efficiently solving linearly constrained multi-block separable
convex programs, which are abundant in machine learning. LADMPSAP
fully utilizes the properties that the proximal operations of the
component objective functions and the projections onto convex sets
are easily solvable, which are usually satisfied by machine
learning problems, making each of its iterations cheap. It is also
highly parallel, making it appealing for parallel or distributed
computing. Numerical experiments testify to the advantages of
LADMPSAP over other possible first order methods.

Although LADMPSAP is inherently parallel, when solving the
proximal operations of component objective functions we will still
face basic numerical algebraic computations. So for particular
large scale machine learning problems, it will be interesting to
integrate the existing distributed computing techniques (e.g.,
parallel incomplete Cholesky
factorization~\citep{Chang-2007-Psvm,Chang-FoLSMM} and caching
factorization techniques~\citep{Boyd-2011-Distributed}) with our
LADMPSAP in order to effectively address the scalability issues.

\begin{acknowledgements}
Z. Lin is supported by NSFC (Nos. 61272341, 61231002, 61121002).
R. Liu is supported by NSFC (No. 61300086), the China Postdoctoral
Science Foundation (No. 2013M530917), the Fundamental Research
Funds for the Central Universities (No. DUT12RC(3)67) and the Open
Project Program of the State Key Lab of CAD\&CG (No.A1404),
Zhejiang University. Z. Lin also thanks Xiaoming Yuan, Wotao Yin,
and Edward Chang for valuable discussions and HTC for financial
support.
\end{acknowledgements}

\appendix

\section{Proof of Theorem~\ref{thm:converge_multivar}}

To prove this theorem, we first have the following lemmas and
propositions.

\begin{lemma}\label{lem:KKT}{\bf (KKT Condition)}
The Kuhn-Karush-Tucker (KKT) condition of problem
(\ref{eq:model_problem_multivar}) is that there exists
$(\x_1^*,\cdots,\x_n^*,\bflambda^*)$, such that
\begin{eqnarray}
&\sum\limits_{i=1}^n \A_i(\x_i^*)=\b,&\label{eq:KKT2}\\
&-\A_i^{\dag}(\bflambda^*)\in \partial f_i(\x_i^*),&
i=1,\cdots,n,\label{eq:KKT1}
\end{eqnarray}
where $\partial f_i$ is the subgradient of $f_i$.
\end{lemma}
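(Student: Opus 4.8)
The plan is to read the two conditions off the Lagrangian of (\ref{eq:model_problem_multivar}),
\begin{equation*}
\mathcal{L}(\x_1,\cdots,\x_n,\bflambda)=\sum_{i=1}^n f_i(\x_i)+\<\bflambda,\ \textstyle\sum_{i=1}^n\A_i(\x_i)-\b\>,
\end{equation*}
using that the objective is a separable sum of closed proper convex functions and that the only constraints are affine. First I would invoke the classical result that a convex program with affine constraints whose optimal value is attained admits a Lagrange multiplier; since the statement presupposes that ``the KKT point'' exists, this yields a $\bflambda^*\in\mathbb{R}^m$ such that $(\x_1^*,\cdots,\x_n^*,\bflambda^*)$ is a saddle point of $\mathcal{L}$. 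No Slater-type condition is required because the feasible set $\{\x:\A(\x_1,\cdots,\x_n)=\b\}$ is polyhedral; this multiplier-existence step is the one that needs care, and it is precisely where affineness of the constraints is used (see, e.g., \citep{Rockafellar,Boyd-Convex-Optimization}).

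Then I would split the saddle-point property into its two halves. Optimality of $\bflambda^*$: since $\mathcal{L}$ is affine in $\bflambda$, $\sup_{\bflambda}\mathcal{L}(\x_1^*,\cdots,\x_n^*,\bflambda)$ is finite only when the coefficient of $\bflambda$ vanishes, which is primal feasibility $\sum_{i=1}^n\A_i(\x_i^*)=\b$, i.e.\ (\ref{eq:KKT2}). Optimality of $(\x_1^*,\cdots,\x_n^*)$ for $\mathcal{L}(\cdot,\bflambda^*)$: using $\<\bflambda^*,\A_i(\x_i)\>=\<\A_i^{\dag}(\bflambda^*),\x_i\>$, the map $\mathcal{L}(\cdot,\bflambda^*)$ decouples across blocks into $\sum_{i=1}^n\big(f_i(\x_i)+\<\A_i^{\dag}(\bflambda^*),\x_i\>\big)$ plus a constant, so each $\x_i^*$ must minimize its own summand, and Fermat's rule for convex functions gives $\bzero\in\partial f_i(\x_i^*)+\A_i^{\dag}(\bflambda^*)$, i.e.\ (\ref{eq:KKT1}). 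Here the subdifferential sum rule is immediate because the coupling term is everywhere differentiable with gradient $\A_i^{\dag}(\bflambda^*)$.

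For completeness I would also record the converse: if $(\{\x_i^*\},\bflambda^*)$ satisfies (\ref{eq:KKT2})--(\ref{eq:KKT1}), then applying the subgradient inequality blockwise, summing, and using feasibility of an arbitrary competitor $(\x_1,\cdots,\x_n)$ shows $\sum_i f_i(\x_i)\ge\sum_i f_i(\x_i^*)$, so $\{\x_i^*\}$ is optimal; this direction is routine and needs no constraint qualification. Consequently the only real obstacle is the necessity direction --- establishing that a multiplier $\bflambda^*$ exists --- and the entire argument hinges on the constraints being affine so that the polyhedral version of the multiplier-existence theorem applies.
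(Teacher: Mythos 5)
Your overall route---form the Lagrangian, read primal feasibility \eqref{eq:KKT2} off the maximization over $\bflambda$, decouple $\mathcal{L}(\cdot,\bflambda^*)$ blockwise and apply Fermat's rule to get \eqref{eq:KKT1}, and prove sufficiency by the routine subgradient/feasibility argument---is the standard one, and it is essentially what the paper relies on: Lemma~\ref{lem:KKT} is stated without proof as the textbook optimality condition, and in every subsequent result the existence of a KKT point $(\x_1^*,\cdots,\x_n^*,\bflambda^*)$ is taken as a hypothesis rather than derived. Your sufficiency direction and the blockwise decomposition are correct as written.

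The genuine gap is the multiplier-existence step. The claim that ``no Slater-type condition is required because the feasible set is polyhedral'' fails for the class of problems the paper actually considers: the $f_i$ are closed proper convex and may be extended-valued (indicator functions appear explicitly, e.g.\ $\chi_{\geq 0}$ in \eqref{eq:nmc-equiv} and $\chi_{X_i}$ in \eqref{eq:model_problem_multivar_equiv}). With affine constraints alone, attainment of the optimum does not guarantee a Lagrange multiplier unless the affine feasible set meets $\mathrm{ri}(\mathrm{dom}\,f)$ (or $\mathrm{dom}\,f$ is itself polyhedral). A classical counterexample fits the paper's format exactly: minimize $x_1+\chi_{C}(x)$ with $C=\{x: x_2\geq x_1^2\}$ subject to the affine constraint $x_2=0$; the optimum is attained at the origin, yet $\partial f(0,0)=\{(1,-t):t\geq 0\}$ contains no vector of the form $-\A^{\dag}(\lambda)=(0,-\lambda)$, so no multiplier exists. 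This is precisely why the paper, once indicators enter in Section~\ref{sec:PPLAMDAP}, adds the assumption that $\b$ is an interior point of $\sum_{i}\A_i(X_i)$. To repair your argument, either add a qualification (a feasible point in $\mathrm{ri}(\mathrm{dom}\,f)$, or finiteness of all $f_i$) for the necessity direction, or---matching the paper's actual usage---present the lemma as recording the KKT system, prove only the sufficiency half, and treat existence of a KKT point as a standing assumption.
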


The first is the feasibility condition and the second is the
duality condition. Such $(\x_1^*,\cdots,\x_n^*,\bflambda^*)$ is
called a KKT point of problem (\ref{eq:model_problem_multivar}).
\begin{lemma}\label{lem:subgradients}
For $\{(\x_1^{k},\cdots,\x_n^k,\lambda^k)\}$ generated by
Algorithm~\ref{alg:LADMPSAP-multivar}, we have that
\begin{equation}
-\sigma_i^{(k)}(\x_i^{k+1}-\u_i^{k}) \in\partial
f_i(\x_i^{k+1}),\quad
i=1,\cdots,n,\label{eq:subproblem_optimality}
\end{equation}
where $\u_i^k=\x_i^{k}-\A_i^{\dag}(\hat{\bflambda}^{k})/\sigma_i^{(k)}$.
\end{lemma}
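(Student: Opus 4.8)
The plan is to obtain (\ref{eq:subproblem_optimality}) as nothing more than the first-order optimality condition of the subproblem (\ref{eq:LADMPSAP_update_xi}) that defines $\x_i^{k+1}$. Write the objective of that subproblem as $\phi_i(\x_i)=f_i(\x_i)+q_i(\x_i)$, where $q_i(\x_i)=\frac{\sigma_i^{(k)}}{2}\lbar\x_i-\x_i^{k}+\A_i^{\dag}(\hat{\bflambda}^{k})/\sigma_i^{(k)}\rbar^2$ is a finite, everywhere differentiable strongly convex quadratic (recall $\sigma_i^{(k)}=\eta_i\beta_k>0$), and $f_i$ is closed proper convex by assumption. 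First I would note that $\phi_i$ is strongly convex and coercive, since $f_i$ admits a continuous affine minorant and $q_i$ grows quadratically; hence the minimizer $\x_i^{k+1}$ exists and is unique, so the update is well defined.

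Next, because $q_i$ is finite and continuous on all of $\mathbb{R}^{d_i}$, the Moreau--Rockafellar sum rule applies with no further constraint qualification and gives $\partial\phi_i(\x)=\partial f_i(\x)+\nabla q_i(\x)$ for every $\x$. By Fermat's rule for convex functions, $\x_i^{k+1}$ minimizes $\phi_i$ if and only if $\mathbf{0}\in\partial\phi_i(\x_i^{k+1})=\partial f_i(\x_i^{k+1})+\nabla q_i(\x_i^{k+1})$. Since $\nabla q_i(\x_i^{k+1})=\sigma_i^{(k)}\bigl(\x_i^{k+1}-\x_i^{k}+\A_i^{\dag}(\hat{\bflambda}^{k})/\sigma_i^{(k)}\bigr)=\sigma_i^{(k)}(\x_i^{k+1}-\u_i^{k})$ by the definition $\u_i^{k}=\x_i^{k}-\A_i^{\dag}(\hat{\bflambda}^{k})/\sigma_i^{(k)}$, rearranging the inclusion $\mathbf{0}\in\partial f_i(\x_i^{k+1})+\sigma_i^{(k)}(\x_i^{k+1}-\u_i^{k})$ yields $-\sigma_i^{(k)}(\x_i^{k+1}-\u_i^{k})\in\partial f_i(\x_i^{k+1})$, which is exactly (\ref{eq:subproblem_optimality}). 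As the $n$ subproblems in (\ref{eq:LADMPSAP_update_xi}) are decoupled over $i$, the same argument applies for each $i=1,\cdots,n$.

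There is no genuinely hard step here: the lemma is just the subproblem optimality condition rewritten in terms of $\u_i^{k}$. The only point worth stating explicitly is the use of the additive subdifferential rule, which is legitimate precisely because the quadratic penalty is finite everywhere; this is why I isolate it above rather than passing over it silently.
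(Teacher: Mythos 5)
Your proposal is correct and follows essentially the same route as the paper, which simply notes that the lemma is the first-order optimality condition of the subproblem (\ref{eq:update_xi}); you have merely filled in the routine details (existence and uniqueness of the minimizer, the Moreau--Rockafellar sum rule, Fermat's rule, and the gradient of the quadratic term), all of which are sound.
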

This can be easily proved by checking the optimality conditions of
(\ref{eq:update_xi}).
\begin{lemma}\label{lem:weak-monotonicity}
For $\{(\x_1^{k},\cdots,\x_n^k,\lambda^k)\}$ generated by
Algorithm~\ref{alg:LADMPSAP-multivar} and a KKT point
$(\x_1^*,\cdots,\x_n^*,\bflambda^*)$ of problem
(\ref{eq:model_problem_multivar}), the following inequality holds:
\begin{equation}
\<-\sigma_i^{(k)}(\x_i^{k+1}-\u_i^{k})+\A_i^{\dag}(\bflambda^*),\x_i^{k+1}-\x_i^*\>\geq
0,\quad i=1,\cdots,n.\label{eq:weak-monotonicity}
\end{equation}
\end{lemma}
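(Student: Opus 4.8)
The statement is an immediate consequence of the monotonicity of the subdifferential of a convex function, applied at the two points $\x_i^{k+1}$ and $\x_i^*$. The plan is to combine the two subgradient inclusions already available: one coming from the optimality of the subproblem, the other from the KKT condition.

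First I would invoke Lemma~\ref{lem:subgradients}, which gives
\[
-\sigma_i^{(k)}(\x_i^{k+1}-\u_i^{k})\in\partial f_i(\x_i^{k+1}),\qquad i=1,\cdots,n.
\]
Next I would invoke the duality part of the KKT condition, equation~\eqref{eq:KKT1} in Lemma~\ref{lem:KKT}, which gives
\[
-\A_i^{\dag}(\bflambda^*)\in\partial f_i(\x_i^*),\qquad i=1,\cdots,n.
\]
Since $f_i$ is a closed proper convex function, its subdifferential $\partial f_i$ is a monotone operator: for any $\p\in\partial f_i(\x)$ and $\bq\in\partial f_i(\y)$ one has $\<\p-\bq,\x-\y\>\geq 0$. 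Applying this with $\x=\x_i^{k+1}$, $\p=-\sigma_i^{(k)}(\x_i^{k+1}-\u_i^{k})$, $\y=\x_i^*$, and $\bq=-\A_i^{\dag}(\bflambda^*)$ yields
\[
\left\<-\sigma_i^{(k)}(\x_i^{k+1}-\u_i^{k})+\A_i^{\dag}(\bflambda^*),\,\x_i^{k+1}-\x_i^*\right\>\geq 0,
\]
which is exactly \eqref{eq:weak-monotonicity}.

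There is no real obstacle here; the only thing to get right is the bookkeeping of signs, i.e.\ checking that the difference of the two chosen subgradients is precisely the vector appearing in the inner product in the statement, and recalling that monotonicity of the subdifferential follows directly from adding the two subgradient inequalities $f_i(\y)\geq f_i(\x)+\<\p,\y-\x\>$ and $f_i(\x)\geq f_i(\y)+\<\bq,\x-\y\>$. (If one prefers, this monotonicity step can be written out explicitly rather than cited, but it is standard.)
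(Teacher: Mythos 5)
Your proposal is correct and follows exactly the route the paper takes: the paper proves this lemma by citing the monotonicity of the subgradient mapping (Rockafellar), applied to the two inclusions from Lemma~\ref{lem:subgradients} and the KKT condition \eqref{eq:KKT1}. Your write-up simply spells out the same one-line argument in full detail, with the sign bookkeeping done correctly.
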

This can be deduced by the monotonicity of subgradient mapping~\citep{Rockafellar}.
\begin{lemma}\label{lem:basic-identity}
For $\{(\x_1^{k},\cdots,\x_n^k,\lambda^k)\}$ generated by
Algorithm~\ref{alg:LADMPSAP-multivar}  and a KKT point
$(\x_1^*,\cdots,\x_n^*,\bflambda^*)$ of problem
(\ref{eq:model_problem_multivar}), we have that
\begin{eqnarray}
&&\beta_k\sum\limits_{i=1}^n
\sigma_i^{(k)}\lbar\x_i^{k+1}-\x_i^*\rbar^2+\lbar\bflambda^{k+1}-\bflambda^*\rbar^2\label{eq:basic_id_line1}\\
&=&\beta_k\sum\limits_{i=1}^n\sigma_i^{(k)}\lbar\x_i^{k}-\x_i^*\rbar^2+\lbar\bflambda^{k}-\bflambda^*\rbar^2\\
&&-2\beta_k\sum\limits_{i=1}^n
\<\x_i^{k+1}-\x_i^*,-\sigma_i^{(k)}(\x_i^{k+1}-\u_i^k)+\A_i^{\dag}(\bflambda^*)\>\label{eq:basic_id_line3}\\
&&-\beta_k\sum\limits_{i=1}^n\sigma_i^{(k)}\lbar\x_i^{k+1}-\x_i^k\rbar^2-\lbar\bflambda^{k+1}-\bflambda^k\rbar^2\\
&&-2\beta_k\sum\limits_{i=1}^n\sigma_i^{(k)}\<\x_i^{k+1}-\x_i^*,\x_i^{k}-\u_i^k\>\label{eq:basic_id_line5}\\
&&+2\<\bflambda^{k+1}-\bflambda^k,\bflambda^{k+1}\>.\label{eq:basic-identity}
\end{eqnarray}
\end{lemma}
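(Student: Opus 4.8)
The identity asserted in Lemma~\ref{lem:basic-identity} is purely algebraic --- it holds for any sequence produced by the iteration \eqref{eq:update_xi}--\eqref{eq:update_beta_multivar} and any KKT point, and it uses neither convexity nor the subgradient optimality of Lemma~\ref{lem:subgradients} (those enter only in the later estimates). So the plan is a direct expansion. The only structural facts I would invoke are the multiplier update \eqref{eq:update_lambda_multivar}, i.e. $\bflambda^{k+1}-\bflambda^k=\beta_k(\sum_{i=1}^n\A_i(\x_i^{k+1})-\b)$, and the feasibility condition \eqref{eq:KKT2}, $\sum_{i=1}^n\A_i(\x_i^*)=\b$. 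Note that $\u_i^k$ will enter only formally, as a fixed vector appearing in two inner products, so its precise definition is immaterial for this identity.

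The starting point is the elementary three-term expansion
\[
\lbar a-c\rbar^2=\lbar b-c\rbar^2-\lbar a-b\rbar^2+2\<a-c,a-b\>,
\]
valid for all vectors $a,b,c$. First I would apply it with $(a,b,c)=(\x_i^{k+1},\x_i^{k},\x_i^*)$, multiply the $i$-th instance by $\beta_k\sigma_i^{(k)}$, and sum over $i$; then apply it once more with $(a,b,c)=(\bflambda^{k+1},\bflambda^{k},\bflambda^*)$ and add the result. This already recovers the left-hand side of the identity, the ``previous-iterate'' part $\beta_k\sum_i\sigma_i^{(k)}\lbar\x_i^{k}-\x_i^*\rbar^2+\lbar\bflambda^{k}-\bflambda^*\rbar^2$, the squared-difference part $-\beta_k\sum_i\sigma_i^{(k)}\lbar\x_i^{k+1}-\x_i^{k}\rbar^2-\lbar\bflambda^{k+1}-\bflambda^{k}\rbar^2$, and two residual cross terms: $2\beta_k\sum_i\sigma_i^{(k)}\<\x_i^{k+1}-\x_i^*,\x_i^{k+1}-\x_i^{k}\>$ and $2\<\bflambda^{k+1}-\bflambda^*,\bflambda^{k+1}-\bflambda^{k}\>$.

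The remaining work is to reshape the first cross term into \eqref{eq:basic_id_line3}, \eqref{eq:basic_id_line5}, and a piece that will merge with the $\bflambda$ cross term. Writing $\x_i^{k+1}-\x_i^{k}=(\x_i^{k+1}-\u_i^k)-(\x_i^{k}-\u_i^k)$ peels off precisely the term \eqref{eq:basic_id_line5}. For the leftover $2\beta_k\sum_i\sigma_i^{(k)}\<\x_i^{k+1}-\x_i^*,\x_i^{k+1}-\u_i^k\>$, I would insert $\A_i^{\dag}(\bflambda^*)$ via $\sigma_i^{(k)}(\x_i^{k+1}-\u_i^k)=\A_i^{\dag}(\bflambda^*)-\big[-\sigma_i^{(k)}(\x_i^{k+1}-\u_i^k)+\A_i^{\dag}(\bflambda^*)\big]$, producing the term \eqref{eq:basic_id_line3} together with $2\beta_k\sum_i\<\A_i(\x_i^{k+1}-\x_i^*),\bflambda^*\>$ once $\A_i^{\dag}$ is moved across the inner product. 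By feasibility $\sum_i\A_i(\x_i^*)=\b$, this last sum collapses to $2\beta_k\<\sum_i\A_i(\x_i^{k+1})-\b,\bflambda^*\>=2\<\bflambda^{k+1}-\bflambda^{k},\bflambda^*\>$ using \eqref{eq:update_lambda_multivar}. Adding it to the surviving $\bflambda$ cross term and using $2\<\bflambda^{k+1}-\bflambda^*,\bflambda^{k+1}-\bflambda^{k}\>+2\<\bflambda^{k+1}-\bflambda^{k},\bflambda^*\>=2\<\bflambda^{k+1}-\bflambda^{k},\bflambda^{k+1}\>$ yields the last line \eqref{eq:basic-identity}; collecting all the pieces gives the stated identity.

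I do not expect a genuine obstacle here: the whole thing is a bookkeeping exercise. The two points that need care are keeping the weights $\beta_k\sigma_i^{(k)}$ consistent through the two applications of the expansion, and applying the substitution $\sum_i\A_i(\x_i^*)=\b$ \emph{before} trying to recognize the resulting cross term $\<\A_i(\x_i^{k+1}-\x_i^*),\bflambda^*\>$ as a multiplier increment via \eqref{eq:update_lambda_multivar}.
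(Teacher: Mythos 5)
Your proposal is correct and is essentially the paper's own argument run in reverse: the paper combines the two cross terms \eqref{eq:basic_id_line3} and \eqref{eq:basic_id_line5}, uses adjointness, the feasibility condition \eqref{eq:KKT2}, and the update \eqref{eq:update_lambda_multivar} to rewrite them as $-2\<\bflambda^{k+1}-\bflambda^k,\bflambda^*\>+2\beta_k\sum_i\sigma_i^{(k)}\<\x_i^{k+1}-\x_i^*,\x_i^{k+1}-\x_i^k\>$, and then invokes the same three-term expansion \eqref{eq:inner_prod_to_norms} that you start from. The ingredients and the bookkeeping are identical, so there is nothing to add.
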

\begin{proof} This can be easily checked. First, we add
(\ref{eq:basic_id_line3}) and (\ref{eq:basic_id_line5}) to have
\begin{eqnarray}
\hspace{-2em}&&-2\beta_k\sum\limits_{i=1}^n
\<\x_i^{k+1}-\x_i^*,-\sigma_i^{(k)}(\x_i^{k+1}-\u_i^k)+\A_i^{\dag}(\bflambda^*)\>\\
\hspace{-2em}&&-2\beta_k\sum\limits_{i=1}^n\sigma_i^{(k)}\<\x_i^{k+1}-\x_i^*,\x_i^{k}-\u_i^k\>\\
\hspace{-2em}&=&-2\beta_k\sum\limits_{i=1}^n\<\x_i^{k+1}-\x_i^*,\A_i^{\dag}(\bflambda^*)\>+2\beta_k\sum\limits_{i=1}^n\sigma_i^{(k)}\<\x_i^{k+1}-\x_i^*,\x_i^{k+1}-\x_i^k\>\\
\hspace{-2em}&=&-2\beta_k\sum\limits_{i=1}^n\<\A_i(\x_i^{k+1}-\x_i^*),\bflambda^*\>+2\beta_k\sum\limits_{i=1}^n\sigma_i^{(k)}\<\x_i^{k+1}-\x_i^*,\x_i^{k+1}-\x_i^k\>\\
\hspace{-2em}&=&-2\<\beta_k\sum\limits_{i=1}^n\A_i(\x_i^{k+1}-\x_i^*),\bflambda^*\>+2\beta_k\sum\limits_{i=1}^n\sigma_i^{(k)}\<\x_i^{k+1}-\x_i^*,\x_i^{k+1}-\x_i^k\>\label{eq:basic_id_proof_line3}\\
\hspace{-2em}&=&-2\<\beta_k\left(\sum\limits_{i=1}^n\A_i(\x_i^{k+1})-\b\right),\bflambda^*\>\\
\hspace{-2em}&&\qquad+2\beta_k\sum\limits_{i=1}^n\sigma_i^{(k)}\<\x_i^{k+1}-\x_i^*,\x_i^{k+1}-\x_i^k\>\\
\hspace{-2em}&=&-2\<\bflambda^{k+1}-\bflambda^k,\bflambda^*\>+2\beta_k\sum\limits_{i=1}^n\sigma_i^{(k)}\<\x_i^{k+1}-\x_i^*,\x_i^{k+1}-\x_i^k\>,
\end{eqnarray}
where we have used (\ref{eq:KKT2}) in
(\ref{eq:basic_id_proof_line3}). Then we apply the identity
\begin{equation}
2\<\a_{k+1}-\a^*,\a_{k+1}-\a_k\>=\|\a_{k+1}-\a^*\|^2-\|\a_{k}-\a^*\|^2+\|\a_{k+1}-\a_k\|^2
\label{eq:inner_prod_to_norms}
\end{equation}
to see that (\ref{eq:basic_id_line1})-(\ref{eq:basic-identity})
holds.\qed
\end{proof}

\begin{proposition}\label{prop:basic-identity-multivar}
For $\{(\x_1^{k},\cdots,\x_n^k,\lambda^k)\}$ generated by
Algorithm~\ref{alg:LADMPSAP-multivar} and a KKT point
$(\x_1^*,\cdots,\x_n^*,\bflambda^*)$ of problem
(\ref{eq:model_problem_multivar}), the following inequality holds:
\begin{eqnarray}
&&\beta_k\sum\limits_{i=1}^n\sigma_i^{(k)}\|\x_i^{k+1}-\x_i^*\|^2+\|\bflambda^{k+1}-\bflambda^*\|^2 \label{eq:basic-identity-multivar-line1}\\
&\leq &\beta_k\sum\limits_{i=1}^n\sigma_i^{(k)}\|\x_i^{k}-\x_i^*\|^2+\|\bflambda^{k}-\bflambda^*\|^2\\
&&-2\beta_k\sum\limits_{i=1}^n \<\x_i^{k+1}-\x_i^*,-\sigma_i^{(k)}(\x_i^{k+1}-\u_i^k)+\A_i^{\dag}(\bflambda^*)\>\\
&&-\beta_k\sum\limits_{i=1}^n \left(\sigma_i^{(k)}-n\beta_k\|\A_i\|^2\right)\|\x_i^{k+1}-\x_i^k\|^2-\|\bflambda^k-\hat{\bflambda}^{k}\|^2.\label{eq:basic-identity-multivar}
\end{eqnarray}
\end{proposition}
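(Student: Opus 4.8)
The plan is to start from the exact identity of Lemma~\ref{lem:basic-identity}. All of its terms up to and including line (\ref{eq:basic_id_line3}) appear unchanged in the proposition (with $=$ relaxed to $\leq$), so the whole argument reduces to estimating the remaining ``leftover'' terms
\begin{align*}
&-\beta_k\sum_{i=1}^n\sigma_i^{(k)}\lbar\x_i^{k+1}-\x_i^k\rbar^2-\lbar\bflambda^{k+1}-\bflambda^k\rbar^2\\
&\qquad-2\beta_k\sum_{i=1}^n\sigma_i^{(k)}\<\x_i^{k+1}-\x_i^*,\x_i^k-\u_i^k\>+2\<\bflambda^{k+1}-\bflambda^k,\bflambda^{k+1}\>.
\end{align*}
First I would rewrite the mixed term. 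Since $\u_i^k=\x_i^k-\A_i^{\dag}(\hat{\bflambda}^k)/\sigma_i^{(k)}$ we have $\sigma_i^{(k)}(\x_i^k-\u_i^k)=\A_i^{\dag}(\hat{\bflambda}^k)$, so passing $\A_i^{\dag}$ to the other side of the inner product, summing over $i$, and using the KKT feasibility $\sum_{i=1}^n\A_i(\x_i^*)=\b$ from Lemma~\ref{lem:KKT} together with the multiplier update (\ref{eq:update_lambda_multivar}),
\begin{align*}
&-2\beta_k\sum_{i=1}^n\sigma_i^{(k)}\<\x_i^{k+1}-\x_i^*,\x_i^k-\u_i^k\>\\
&\quad=-2\beta_k\<\sum_{i=1}^n\A_i(\x_i^{k+1})-\b,\hat{\bflambda}^k\>\\
&\quad=-2\<\bflambda^{k+1}-\bflambda^k,\hat{\bflambda}^k\>.
\end{align*}
Thus the $\bflambda$-dependent part of the leftover collapses to $-\lbar\bflambda^{k+1}-\bflambda^k\rbar^2+2\<\bflambda^{k+1}-\bflambda^k,\bflambda^{k+1}-\hat{\bflambda}^k\>$.

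Next I would invoke the elementary identity $2\<\mathbf{s}-\mathbf{t},\mathbf{s}\>=\lbar\mathbf{s}\rbar^2+\lbar\mathbf{s}-\mathbf{t}\rbar^2-\lbar\mathbf{t}\rbar^2$ with $\mathbf{s}=\bflambda^{k+1}-\hat{\bflambda}^k$ and $\mathbf{t}=\bflambda^k-\hat{\bflambda}^k$ (so that $\mathbf{s}-\mathbf{t}=\bflambda^{k+1}-\bflambda^k$); this gives $2\<\bflambda^{k+1}-\bflambda^k,\bflambda^{k+1}-\hat{\bflambda}^k\>=\lbar\bflambda^{k+1}-\hat{\bflambda}^k\rbar^2+\lbar\bflambda^{k+1}-\bflambda^k\rbar^2-\lbar\bflambda^k-\hat{\bflambda}^k\rbar^2$. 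The term $\lbar\bflambda^{k+1}-\bflambda^k\rbar^2$ cancels the one already present in the leftover, and $-\lbar\bflambda^k-\hat{\bflambda}^k\rbar^2$ is precisely the term appearing on the right of the proposition, so everything now hinges on controlling $\lbar\bflambda^{k+1}-\hat{\bflambda}^k\rbar^2$ by the block increments $\x_i^{k+1}-\x_i^k$.

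Finally, subtracting (\ref{eq:hat_lambda}) from (\ref{eq:update_lambda_multivar}) gives $\bflambda^{k+1}-\hat{\bflambda}^k=\beta_k\sum_{i=1}^n\A_i(\x_i^{k+1}-\x_i^k)$, whence by the Cauchy--Schwarz inequality $\lbar\sum_{i=1}^n\v_i\rbar^2\leq n\sum_{i=1}^n\lbar\v_i\rbar^2$ and $\lbar\A_i(\x_i^{k+1}-\x_i^k)\rbar\leq\|\A_i\|\,\lbar\x_i^{k+1}-\x_i^k\rbar$,
\begin{align*}
\lbar\bflambda^{k+1}-\hat{\bflambda}^k\rbar^2&\leq n\beta_k^2\sum_{i=1}^n\|\A_i\|^2\lbar\x_i^{k+1}-\x_i^k\rbar^2\\
&=\beta_k\sum_{i=1}^n n\beta_k\|\A_i\|^2\lbar\x_i^{k+1}-\x_i^k\rbar^2.
\end{align*}
Substituting this into the leftover and merging the $-\beta_k\sigma_i^{(k)}\lbar\x_i^{k+1}-\x_i^k\rbar^2$ and $+\beta_k n\beta_k\|\A_i\|^2\lbar\x_i^{k+1}-\x_i^k\rbar^2$ contributions into $-\beta_k(\sigma_i^{(k)}-n\beta_k\|\A_i\|^2)\lbar\x_i^{k+1}-\x_i^k\rbar^2$ reproduces the right-hand side of the proposition. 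I do not expect a genuine difficulty here; the argument is essentially bookkeeping. The one point that deserves attention is the factor $n$ picked up in the Cauchy--Schwarz step: it is unavoidable because $\bflambda^{k+1}-\hat{\bflambda}^k$ is a sum of $n$ vectors (in the two-block LADMAP derivation the analogous quantity involves a single block's increment), and it is exactly this factor that, in the subsequent convergence proof, forces the hypothesis to be strengthened from $\eta_i>\|\A_i\|^2$ to $\eta_i>n\|\A_i\|^2$, so that $\sigma_i^{(k)}-n\beta_k\|\A_i\|^2=(\eta_i-n\|\A_i\|^2)\beta_k>0$.
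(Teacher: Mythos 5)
Your proposal is correct and follows essentially the same route as the paper's proof: continuing from Lemma~\ref{lem:basic-identity}, using $\sigma_i^{(k)}(\x_i^k-\u_i^k)=\A_i^{\dag}(\hat{\bflambda}^k)$ together with feasibility and the multiplier update to collapse the cross terms into $2\<\bflambda^{k+1}-\bflambda^k,\bflambda^{k+1}-\hat{\bflambda}^k\>$, expanding via the norm identity, and bounding $\lbar\bflambda^{k+1}-\hat{\bflambda}^k\rbar^2=\beta_k^2\lbar\sum_{i=1}^n\A_i(\x_i^{k+1}-\x_i^k)\rbar^2\leq n\beta_k^2\sum_{i=1}^n\|\A_i\|^2\lbar\x_i^{k+1}-\x_i^k\rbar^2$. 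Your closing remark about the factor $n$ being the source of the strengthened condition $\eta_i>n\|\A_i\|^2$ matches the paper's discussion as well.
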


\begin{proof}
We continue from
(\ref{eq:basic_id_line5})-(\ref{eq:basic-identity}). As
$\sigma_i^{(k)}(\x_i^k-\u_i^k)=\A_i^{\dag}(\hat{\bflambda}^k)$, we have
\begin{eqnarray}
&&-2\beta_k\sum\limits_{i=1}^n\sigma_i^{(k)}\<\x_i^{k+1}-\x_i^*,\x_i^{k}-\u_i^k\>+2\<\bflambda^{k+1}-\bflambda^k,\bflambda^{k+1}\>\\
&=&-2\beta_k\sum\limits_{i=1}^n\<\A_i(\x_i^{k+1}-\x_i^*),\hat{\bflambda}^k\>+2\<\bflambda^{k+1}-\bflambda^k,\bflambda^{k+1}\>\\
&=&-2\beta_k\<\sum\limits_{i=1}^n\A_i(\x_i^{k+1})-\sum\limits_{i=1}^n\A_i(\x_i^*),\hat{\bflambda}^k\>+2\<\bflambda^{k+1}-\bflambda^k,\bflambda^{k+1}\>\\
&=&-2\<\bflambda^{k+1}-\bflambda^k,\hat{\bflambda}^k\>+2\<\bflambda^{k+1}-\bflambda^k,\bflambda^{k+1}\>\\
&=&2\<\bflambda^{k+1}-\bflambda^k,\bflambda^{k+1}-\hat{\bflambda}^k\>\\
&=&\|\bflambda^{k+1}-\bflambda^k\|^2+\|\bflambda^{k+1}-\hat{\bflambda}^k\|^2-\|\bflambda^{k}-\hat{\bflambda}^k\|^2\\
&=&\|\bflambda^{k+1}-\bflambda^k\|^2+\beta_k^2\left\|\sum\limits_{i=1}^n\A_i(\x_i^{k+1}-\x_i^k)\right\|^2-\|\bflambda^{k}-\hat{\bflambda}^k\|^2\label{eq:basic-identity-multivar-line7}\\
&\leq &\|\bflambda^{k+1}-\bflambda^k\|^2+\beta_k^2\left(\sum\limits_{i=1}^n\|\A_i\|\|\x_i^{k+1}-\x_i^k\|\right)^2-\|\bflambda^{k}-\hat{\bflambda}^k\|^2\\
&\leq
&\|\bflambda^{k+1}-\bflambda^k\|^2+n\beta_k^2\sum\limits_{i=1}^n\|\A_i\|^2\|\x_i^{k+1}-\x_i^k\|^2-\|\bflambda^{k}-\hat{\bflambda}^k\|^2
\end{eqnarray}
Plugging the above into
(\ref{eq:basic_id_line5})-(\ref{eq:basic-identity}), we have
(\ref{eq:basic-identity-multivar-line1})-(\ref{eq:basic-identity-multivar}).\qed
\end{proof}

\begin{remark}\label{rem:different}
Proposition~\ref{prop:basic-identity-multivar} shows that the
sequence $\{(\x_1^k,\cdots,\x_n^k,\lambda^k)\}$ is Fej\'{e}r
monotone. Proposition~\ref{prop:basic-identity-multivar} is
different from Lemma 1 in Supplementary Material
of~\citep{Lin-2011-LADMAP} because for $n>2$ we cannot obtain an
(in)equality that is similar to Lemma 1 in Supplementary Material
of~\citep{Lin-2011-LADMAP} such that each term with minus sign
could be made non-positive. Such Fej\'{e}r monotone (in)equalities
are the corner stones for proving the convergence of Lagrange
multiplier based optimization algorithms. As a result, we cannot
prove the convergence of the naively generalized LADM for the
multi-block case.
\end{remark}

Then we have the following proposition.
\begin{proposition}\label{prop:converge_multivar}
Let $\sigma_i^{(k)}=\eta_i\beta_k$, $i=1,\cdots,n$. If
$\{\beta_k\}$ is non-decreasing, $\eta_i > n \|\mathcal{A}_i\|^2$,
$i=1,\cdots,n$, $\{(\x_1^{k},\cdots,\x_n^k,\lambda^k)\}$ is
generated by Algorithm~\ref{alg:LADMPSAP-multivar}, and
$(\x_1^*,\cdots,\x_n^*,\bflambda^*)$ is any KKT point of problem
(\ref{eq:model_problem_multivar}), then
\begin{enumerate}
\item[1)]
$\left\{\sum\limits_{i=1}^n\eta_i\|\mathbf{x}_i^k-\mathbf{x}_i^*\|^2
+\beta_k^{-2}\|\bflambda^{k}-\bflambda^*\|^2\right\}$ is
nonnegative and non-increasing. \item[2)]
$\|\mathbf{x}_i^{k+1}-\mathbf{x}_i^k\|\to 0$, $i=1,\cdots,n$, and
$\beta_k^{-1}\|\bflambda^{k}-\hat{\bflambda}^k\|\to 0$. \item[3)]
$\sum\limits_{k=1}^{+\infty}\beta_k^{-1}\<\x_i^{k+1}-\x_i^*,-\sigma_i^{(k)}(\x_i^{k+1}-\u_i^k)+\A_i^{\dag}(\bflambda^*)\>
<+\infty$, $i=1,\cdots,n$.
\end{enumerate}
\end{proposition}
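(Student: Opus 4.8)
The plan is to obtain all three assertions directly from the Fej\'er-type estimate of Proposition~\ref{prop:basic-identity-multivar}, specialized to the choice $\sigma_i^{(k)}=\eta_i\beta_k$. First I would substitute this choice into (\ref{eq:basic-identity-multivar-line1})--(\ref{eq:basic-identity-multivar}) and divide the whole inequality by $\beta_k^2$. After this normalization the left-hand side reads $\sum_{i=1}^n\eta_i\lbar\x_i^{k+1}-\x_i^*\rbar^2+\beta_k^{-2}\lbar\bflambda^{k+1}-\bflambda^*\rbar^2$, the leading terms on the right read $\sum_{i=1}^n\eta_i\lbar\x_i^{k}-\x_i^*\rbar^2+\beta_k^{-2}\lbar\bflambda^{k}-\bflambda^*\rbar^2$, and the three remaining terms appear with a minus sign: $2\beta_k^{-1}\sum_{i=1}^n\<\x_i^{k+1}-\x_i^*,-\sigma_i^{(k)}(\x_i^{k+1}-\u_i^k)+\A_i^{\dag}(\bflambda^*)\>$, which is nonnegative by Lemma~\ref{lem:weak-monotonicity}; $\sum_{i=1}^n(\eta_i-n\|\A_i\|^2)\lbar\x_i^{k+1}-\x_i^k\rbar^2$, whose coefficients are strictly positive precisely because $\eta_i>n\|\A_i\|^2$; and $\beta_k^{-2}\lbar\bflambda^k-\hat{\bflambda}^k\rbar^2$, which is trivially nonnegative.

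The only point needing care is that, in this normalized inequality, the $\bflambda$-contributions on both sides carry the factor $\beta_k^{-2}$ rather than $\beta_{k+1}^{-2}$ on the left. Since $\{\beta_k\}$ is non-decreasing, $\beta_{k+1}^{-2}\le\beta_k^{-2}$, so replacing $\beta_k^{-2}$ by $\beta_{k+1}^{-2}$ in the left-hand $\bflambda$-term only decreases it. Writing $a_k=\sum_{i=1}^n\eta_i\lbar\x_i^{k}-\x_i^*\rbar^2+\beta_k^{-2}\lbar\bflambda^{k}-\bflambda^*\rbar^2$, the inequality then becomes
\[
a_{k+1}\le a_k-2\beta_k^{-1}\sum_{i=1}^n\<\x_i^{k+1}-\x_i^*,-\sigma_i^{(k)}(\x_i^{k+1}-\u_i^k)+\A_i^{\dag}(\bflambda^*)\>-\sum_{i=1}^n(\eta_i-n\|\A_i\|^2)\lbar\x_i^{k+1}-\x_i^k\rbar^2-\beta_k^{-2}\lbar\bflambda^k-\hat{\bflambda}^k\rbar^2 .
\]
Because the three subtracted quantities are nonnegative, $a_{k+1}\le a_k$; since $a_k\ge0$ is obvious, this is exactly assertion~1).

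For assertions~2) and~3) I would sum the displayed inequality over $k=0,1,2,\dots$. The left side telescopes, giving $\sum_{k\ge0}(a_k-a_{k+1})=a_0-\lim_{k}a_k\le a_0<+\infty$ (the limit exists by~1)), so each of the three nonnegative series
\[
\sum_{k}\beta_k^{-1}\<\x_i^{k+1}-\x_i^*,-\sigma_i^{(k)}(\x_i^{k+1}-\u_i^k)+\A_i^{\dag}(\bflambda^*)\>,\qquad \sum_{k}\lbar\x_i^{k+1}-\x_i^k\rbar^2,\qquad \sum_{k}\beta_k^{-2}\lbar\bflambda^k-\hat{\bflambda}^k\rbar^2
\]
is finite. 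Finiteness of the first series is assertion~3). Finiteness of the second forces $\lbar\x_i^{k+1}-\x_i^k\rbar\to0$; finiteness of the third forces $\beta_k^{-2}\lbar\bflambda^k-\hat{\bflambda}^k\rbar^2\to0$, i.e.\ $\beta_k^{-1}\lbar\bflambda^k-\hat{\bflambda}^k\rbar\to0$. Together these give assertion~2).

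I do not expect a genuine obstacle here; the argument is essentially bookkeeping on top of Proposition~\ref{prop:basic-identity-multivar}. The two things that must be handled correctly are: (i) dividing by $\beta_k^2$ and then using the monotonicity of $\{\beta_k\}$ so that the $\bflambda$-terms line up into a genuine telescoping sum (the division also makes the $\x$-coefficients equal to $\eta_i$, independent of $k$); and (ii) using the \emph{strict} inequality $\eta_i>n\|\A_i\|^2$, which gives a fixed positive lower bound on $\eta_i-n\|\A_i\|^2$ and thereby upgrades summability of $(\eta_i-n\|\A_i\|^2)\lbar\x_i^{k+1}-\x_i^k\rbar^2$ to summability of $\lbar\x_i^{k+1}-\x_i^k\rbar^2$, hence to $\lbar\x_i^{k+1}-\x_i^k\rbar\to0$.
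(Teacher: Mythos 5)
Your proposal is correct and follows the same route as the paper's own proof: substitute $\sigma_i^{(k)}=\eta_i\beta_k$ into Proposition~\ref{prop:basic-identity-multivar}, divide by $\beta_k^2$, use the weak monotonicity inequality, the strict bound $\eta_i>n\|\A_i\|^2$, and the non-decrease of $\{\beta_k\}$ to get the Fej\'er/telescoping estimate for 1), then sum over $k$ to obtain 2) and 3). Your explicit handling of the $\beta_k^{-2}$ versus $\beta_{k+1}^{-2}$ issue is just a spelled-out version of the step the paper compresses into ``by the non-decrement of $\{\beta_k\}$''.
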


\begin{proof} We divide both sides of (\ref{eq:basic-identity-multivar-line1})-(\ref{eq:basic-identity-multivar}) by
$\beta_k^2$ to have
\begin{eqnarray}
&&\sum\limits_{i=1}^n\eta_i\|\x_i^{k+1}-\x_i^*\|^2+\beta_k^{-2}\|\bflambda^{k+1}-\bflambda^*\|^2 \label{eq:basic-identity-multivar-line1'}\\
&\leq &\sum\limits_{i=1}^n\eta_i\|\x_i^{k}-\x_i^*\|^2+\beta_k^{-2}\|\bflambda^{k}-\bflambda^*\|^2\\
&&-2\beta_k^{-1}\sum\limits_{i=1}^n \<\x_i^{k+1}-\x_i^*,-\sigma_i^{(k)}(\x_i^{k+1}-\u_i^k)+\A_i^{\dag}(\bflambda^*)\>\\
&&-\sum\limits_{i=1}^n \left(\eta_i-n\|\A_i\|^2\right)\|\x_i^{k+1}-\x_i^k\|^2\\
&&-\beta_k^{-2}\|\bflambda^k-\hat{\bflambda}^{k}\|^2.\label{eq:basic-identity-multivar'}
\end{eqnarray}
Then by (\ref{eq:weak-monotonicity}), $\eta_i > n
\|\mathcal{A}_i\|^2$ and the non-decrement of $\{\beta_k\}$, we
can easily obtain 1). Second, we sum both sides of
(\ref{eq:basic-identity-multivar-line1'})-(\ref{eq:basic-identity-multivar'})
over $k$ to have
\begin{eqnarray}
&&2\sum\limits_{k=0}^{+\infty}\beta_k^{-1}\sum\limits_{i=1}^n \<\x_i^{k+1}-\x^*,-\sigma_i^{(k)}(\x_i^{k+1}-\u_i^k)+\A_i^{\dag}(\bflambda^*)\>\\
&&+\sum\limits_{i=1}^n \left(\eta_i-n\|\A_i\|^2\right)\sum\limits_{k=0}^{+\infty}\|\x_i^{k+1}-\x_i^k\|^2\\
&&+\sum\limits_{k=0}^{+\infty}\beta_k^{-2}\|\bflambda^k-\hat{\bflambda}^{k}\|^2\\
&\leq
&\sum\limits_{i=1}^n\eta_i\|\x_i^{0}-\x^*\|^2+\beta_0^{-2}\|\bflambda^{0}-\bflambda^*\|^2.
\end{eqnarray}
Then 2) and 3) can be easily deduced.\qed
\end{proof}

Now we are ready to prove Theorem~\ref{thm:converge_multivar}. The
proof resembles that in \citep{Lin-2011-LADMAP}.
\begin{proof} {\bf (of Theorem~\ref{thm:converge_multivar})}
By Proposition~\ref{prop:converge_multivar}-1) and the boundedness
of $\{\beta_k\}$,
$\{(\mathbf{x}_1^k,\cdots,\mathbf{x}_n^k,\bflambda^k)\}$ is
bounded, hence has an accumulation point, say $
(\mathbf{x}_1^{k_j},\cdots,\mathbf{x}_n^{k_j},\bflambda^{k_j}) \to
(\mathbf{x}_1^{\infty},\cdots,\mathbf{x}_n^{\infty},\bflambda^{\infty})
$. We accomplish the proof in two steps.

\textbf{1.} We first prove that
$(\mathbf{x}_1^{\infty},\cdots,\mathbf{x}_n^{\infty},\bflambda^{\infty})$
is a KKT point of problem (\ref{eq:model_problem_multivar}).

By Proposition~\ref{prop:converge_multivar}-2), $$
\sum\limits_{i=1}^n\mathcal{A}_i(\mathbf{x}_i^{k})-\mathbf{b} =
\beta_k^{-1}(\hat{\bflambda}^{k}-\bflambda^k) \to 0.$$ So any
accumulation point of
$\{(\mathbf{x}_1^{k},\cdots,\mathbf{x}_n^{k})\}$ is a feasible
solution.

Since $-\sigma_i^{(k_j-1)}(\x_i^{k_j}-\u_i^{k_j-1}) \in\partial
f_i(\x_i^{k_j})$, we have
\begin{eqnarray*}
\begin{array}{rl}
\sum\limits_{i=1}^n f_i(\mathbf{x}_i^{k_j})&\leq
\sum\limits_{i=1}^n f_i(\mathbf{x}_i^*)
+\sum\limits_{i=1}^n \< \mathbf{x}_i^{k_j} - \mathbf{x}_i^*, -\sigma_i^{(k_j-1)}(\x_i^{k_j}-\u_i^{k_j-1}) \> \\
&= \sum\limits_{i=1}^n f_i(\mathbf{x}_i^*) +\sum\limits_{i=1}^n \<
\mathbf{x}_i^{k_j} - \mathbf{x}_i^*,
-\eta_i\beta_{k_j-1}(\x_i^{k_j}-\x_i^{k_j-1})-\A_i^{\dag}(\hat{\bflambda}^{k_j-1})
\>.
\end{array}
\end{eqnarray*}
Let $j\to +\infty$. By observing
Proposition~\ref{prop:converge_multivar}-2) and the boundedness of
$\{\beta_k\}$, we have
\begin{eqnarray*}
\begin{array}{rl}
\sum\limits_{i=1}^n f_i(\mathbf{x}_i^{\infty})&\leq
\sum\limits_{i=1}^n f_i(\mathbf{x}_i^*)
+\sum\limits_{i=1}^n \< \mathbf{x}_i^{\infty} - \mathbf{x}_i^*,-\A_i^{\dag}(\bflambda^\infty) \> \\
&= \sum\limits_{i=1}^n f_i(\mathbf{x}_i^*)
-\sum\limits_{i=1}^n \< \A(\mathbf{x}_i^{\infty} - \mathbf{x}_i^*), \bflambda^\infty \> \\
&= \sum\limits_{i=1}^n f_i(\mathbf{x}_i^*)
-\< \sum\limits_{i=1}^n \A(\mathbf{x}_i^{\infty}) - \b, \bflambda^\infty \> \\
&= \sum\limits_{i=1}^n f_i(\mathbf{x}_i^*).
\end{array}
\end{eqnarray*}
So we conclude that
$(\mathbf{x}_1^{\infty},\cdots,\mathbf{x}_n^{\infty})$ is an
optimal solution to (\ref{eq:model_problem_multivar}).

Again by $-\sigma_i^{(k_j-1)}(\x_i^{k_j}-\u_i^{k_j-1}) \in\partial
f_i(\x_i^{k_j})$ we have
\begin{eqnarray*}
\begin{array}{rl}
f_i(\mathbf{x})&\geq f_i(\mathbf{x}_i^{k_j})
+\<\mathbf{x}-\mathbf{x}_i^{k_j},-\sigma_i^{(k_j-1)}(\mathbf{x}_i^{k_j}-\mathbf{u}_i^{k_j-1}) \>\\
&= f_i(\mathbf{x}_i^{k_j})
+\<\mathbf{x}-\mathbf{x}_i^{k_j},-\eta_i\beta_{k_j-1}(\mathbf{x}_i^{k_j}-\mathbf{x}_i^{k_j-1})-\A_i^{\dag}(\hat{\bflambda}^{k_j-1})
\>.
\end{array}
\end{eqnarray*}
Fixing $\mathbf{x}$ and letting $j\to +\infty$, we see that $$
f_i(\mathbf{x}) \geq f_i(\mathbf{x}_i^{\infty}) + \<
\mathbf{x}-\mathbf{x}_i^{\infty},
-\mathcal{A}_i^{\dag}(\bflambda^{\infty})\>,\quad\forall\mathbf{x}.$$
So $-\mathcal{A}_i^{\dag}(\bflambda^{\infty}) \in \partial
f_i(\mathbf{x}_i^{\infty})$, $i=1,\cdots,n$. Thus
$(\x_1^\infty,\cdots,\x_n^\infty,\bflambda^\infty)$ is a KKT point
of problem (\ref{eq:model_problem_multivar}).

\textbf{2.} We next prove that the whole sequence
$\{(\mathbf{x}_1^k,\cdots,\mathbf{x}_n^k,\bflambda^k)\}$ converges
to
$(\mathbf{x}_1^{\infty},\cdots,\mathbf{x}_n^{\infty},\bflambda^{\infty})$.

By choosing
$(\mathbf{x}_1^*,\cdots,\mathbf{x}_n^*,\bflambda^*)=(\mathbf{x}_1^{\infty},\cdots,\mathbf{x}_n^{\infty},\bflambda^{\infty})$
in Proposition~\ref{prop:converge_multivar}, we have
$$
\sum\limits_{i=1}^n\eta_i\|\mathbf{x}_i^{k_j}-\mathbf{x}_i^{\infty}\|^2
+\beta_{k_j}^{-2}\|\bflambda^{k_j}-\bflambda^{\infty}\|^2 \to 0.
$$ By Proposition~\ref{prop:converge_multivar}-1), we readily have
$$
\sum\limits_{i=1}^n\eta_i\|\mathbf{x}_i^{k}-\mathbf{x}_i^{\infty}\|^2
+\beta_{k}^{-2}\|\bflambda^{k}-\bflambda^{\infty}\|^2 \to 0. $$ So
$(\mathbf{x}_1^k,\cdots,\mathbf{x}_n^k,\bflambda^k) \to
(\mathbf{x}_1^{\infty},\cdots,\mathbf{x}_n^{\infty},\bflambda^{\infty})$.

As
$(\mathbf{x}_1^{\infty},\cdots,\mathbf{x}_n^{\infty},\bflambda^{\infty})$
can be an arbitrary accumulation point of
$\{(\mathbf{x}_1^k,\cdots,\mathbf{x}_n^k,\bflambda^k)\}$, we
conclude that
$\{(\mathbf{x}_1^k,\cdots,\mathbf{x}_n^k,\bflambda^k)\}$ converge
to a KKT point of problem (\ref{eq:model_problem_multivar}).\qed
\end{proof}

\section{Proof of Theorem~\ref{thm:convergence_unbounded}}
We first have the following proposition.
\begin{proposition}\label{prop:unbounded_beta}
If $\{\beta_k\}$ is non-decreasing and unbounded, $\eta_i >
n\|\A_i\|^2$ and $\partial f_i(\x)$ is bounded for $i=1,\cdots,n$, then
Proposition~\ref{prop:converge_multivar} holds and
\begin{equation}
\beta_k^{-1} \bflambda^k \rightarrow
0.\label{eq:lambda_k_unbounded}
\end{equation}
\end{proposition}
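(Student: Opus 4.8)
The first claim requires essentially no work: Proposition~\ref{prop:converge_multivar} assumes only that $\{\beta_k\}$ is non-decreasing and that $\eta_i>n\|\A_i\|^2$ for all $i$ — boundedness of $\{\beta_k\}$ is never used in its proof — and both hypotheses are in force here. So Proposition~\ref{prop:converge_multivar} holds verbatim; I will in particular use part~2): $\|\x_i^{k+1}-\x_i^k\|\to 0$ for each $i$, and $\beta_k^{-1}\|\bflambda^k-\hat{\bflambda}^k\|\to 0$.

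For \eqref{eq:lambda_k_unbounded} the plan is to first establish $\beta_k^{-1}\hat{\bflambda}^k\to 0$ and then transfer this to $\bflambda^k$ using part~2) above. Starting from Lemma~\ref{lem:subgradients} and the identity $\sigma_i^{(k)}(\x_i^{k+1}-\u_i^k)=\eta_i\beta_k(\x_i^{k+1}-\x_i^k)+\A_i^{\dag}(\hat{\bflambda}^k)$, we have $-\big[\eta_i\beta_k(\x_i^{k+1}-\x_i^k)+\A_i^{\dag}(\hat{\bflambda}^k)\big]\in\partial f_i(\x_i^{k+1})$. Since $\partial f_i$ is bounded, for each $i$ there is a constant $C_i$ with $\big\|\eta_i\beta_k(\x_i^{k+1}-\x_i^k)+\A_i^{\dag}(\hat{\bflambda}^k)\big\|\le C_i$ for all $k$. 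Dividing by $\beta_k$ and using that $\beta_k\to+\infty$ (this is exactly where the ``unbounded'' hypothesis enters, together with monotonicity), we get $\big\|\eta_i(\x_i^{k+1}-\x_i^k)+\beta_k^{-1}\A_i^{\dag}(\hat{\bflambda}^k)\big\|\le C_i/\beta_k\to 0$; combining this with $\|\x_i^{k+1}-\x_i^k\|\to 0$ and the triangle inequality yields $\beta_k^{-1}\A_i^{\dag}(\hat{\bflambda}^k)\to 0$ for every $i$.

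Next I would invoke the standing assumption that $\ba=(\ba_1\ \cdots\ \ba_n)$ has full row rank, so that $\ba\ba^T$ is positive definite and $\sum_{i=1}^n\|\A_i^{\dag}(\y)\|^2=\|\ba^T\y\|^2\ge\lambda_{\min}(\ba\ba^T)\,\|\y\|^2$ for all $\y$. Applying this with $\y=\hat{\bflambda}^k$ and dividing by $\beta_k^2$ gives $\beta_k^{-2}\|\hat{\bflambda}^k\|^2\le\lambda_{\min}(\ba\ba^T)^{-1}\sum_{i=1}^n\|\beta_k^{-1}\A_i^{\dag}(\hat{\bflambda}^k)\|^2\to 0$, hence $\beta_k^{-1}\hat{\bflambda}^k\to 0$. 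Finally, writing $\beta_k^{-1}\bflambda^k=\beta_k^{-1}\hat{\bflambda}^k-\beta_k^{-1}(\hat{\bflambda}^k-\bflambda^k)$, both terms on the right tend to $0$ — the first by what was just shown, the second by Proposition~\ref{prop:converge_multivar}-2) — which proves \eqref{eq:lambda_k_unbounded}.

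The only step that is not routine bookkeeping is the implication ``$\beta_k^{-1}\A_i^{\dag}(\hat{\bflambda}^k)\to 0$ for each $i$ $\Rightarrow$ $\beta_k^{-1}\hat{\bflambda}^k\to 0$'', which rests entirely on the onto/full-row-rank hypothesis on $\A$ (equivalently, the boundedness below of the adjoint $\A^{\dag}$). Everything else follows by combining Lemma~\ref{lem:subgradients}, the subgradient bound, and the already-established Proposition~\ref{prop:converge_multivar}.
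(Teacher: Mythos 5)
Your proof is correct and rests on the same ingredients as the paper's: the rewritten subgradient inclusion from Lemma~\ref{lem:subgradients}, the boundedness of $\partial f_i$, the unboundedness of $\{\beta_k\}$ together with Proposition~\ref{prop:converge_multivar}-2), and the standing onto/full-row-rank assumption on $\A$; your opening observation that Proposition~\ref{prop:converge_multivar} never uses boundedness of $\{\beta_k\}$ matches the paper's remark that the present hypotheses are stricter. The only difference is in the finishing step: the paper first notes $\{\beta_k^{-1}\bflambda^k\}$ and $\{\beta_k^{-1}\hat{\bflambda}^k\}$ are bounded (via Proposition~\ref{prop:converge_multivar}-1)), passes to an accumulation point $\check{\bflambda}^\infty$, shows $\A_i^{\dag}(\check{\bflambda}^\infty)=0$ for all $i$, and invokes $\cap_i\,\mathrm{null}(\A_i^{\dag})=\{0\}$, whereas you convert the same full-row-rank fact into the quantitative bound $\sum_i\|\A_i^{\dag}(\y)\|^2\ge\lambda_{\min}(\mathbf{A}\mathbf{A}^T)\|\y\|^2$ and conclude $\beta_k^{-1}\hat{\bflambda}^k\to 0$ for the whole sequence directly, without the compactness argument or the boundedness step; this is a slightly cleaner route, and both are valid.
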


\begin{proof} As the conditions here are stricter than those in Proposition~\ref{prop:converge_multivar}, Proposition~\ref{prop:converge_multivar} holds. Then we have that
$\{\beta_k^{-1}\|\bflambda^k - \bflambda^*\|\}$ is bounded due to
Proposition~\ref{prop:converge_multivar}-1). So
$\{\beta_k^{-1}\bflambda^k\}$ is bounded due to
$\beta_k^{-1}\|\bflambda^k\|\leq \beta_k^{-1}\|\bflambda^k -
\bflambda^*\|+\beta_k^{-1}\|\bflambda^*\|$.
$\{\beta_k^{-1}\hat{\bflambda}^k\}$ is also bounded thanks to
Proposition~\ref{prop:converge_multivar}-2).

We rewrite Lemma~\ref{lem:subgradients} as
\begin{equation}
-\eta_i(\x_i^{k+1}-\x_i^{k})-\A_i^{\dag}(\beta_k^{-1}\hat{\bflambda}^k)
\in\beta_k^{-1}\partial f_i(\x_i^{k+1}), \quad
i=1,\cdots,n.\label{eq:subproblem_optimality1}
\end{equation}
Then by the boundedness of $\partial f_i(x)$, the unboundedness of
$\{\beta_k\}$ and Proposition~\ref{prop:converge_multivar}-2),
letting $k\rightarrow +\infty$, we have that
\begin{equation}
\A_i^{\dag}(\check{\bflambda}^\infty)= 0, \quad
i=1,\cdots,n.\label{eq:subproblem_optimality2}
\end{equation}
where $\check{\bflambda}^\infty$ is any accumulation point of
$\{\beta_k^{-1}\hat{\bflambda}^k\}$, which is the same as that of
$\{\beta_k^{-1}\bflambda^k\}$ due to
Proposition~\ref{prop:converge_multivar}-2).

Recall that we have assumed that the mapping
$\A(\x_1,\cdots,\x_n)\equiv\sum\limits_{i=1}^n\A_i(\x_i)$ is onto.
So $\cap_{i=1}^n null(\A_i^\dag)=0$. Therefore by
(\ref{eq:subproblem_optimality2}),
$\check{\bflambda}^\infty=0$.\qed
\end{proof}

Based on Proposition~\ref{prop:unbounded_beta}, we can prove Theorem~\ref{thm:convergence_unbounded} as follows.
\begin{proof} {\bf (of Theorem~\ref{thm:convergence_unbounded})} When $\{\beta_k\}$ is bounded, the convergence has been proven in Theorem 1. In the following, we only focus on the case that $\{\beta_k\}$ is unbounded.

By Proposition~\ref{prop:converge_multivar}-1),
$\{(\x_1^k,\cdots,\x_n^k)\}$ is bounded, hence has at least one
accumulation point $(\x_1^\infty,\cdots,\x_n^\infty)$. By
Proposition~\ref{prop:converge_multivar}-2),
$(\x_1^\infty,\cdots,\x_n^\infty)$ is a feasible solution.

Since $\sum\limits_{k=1}^{+\infty}\beta_k^{-1}=+\infty$ and
Proposition~\ref{prop:converge_multivar}-3), there exists a
subsequence $\{(\x_1^{k_j},\cdots,\x_n^{k_j})\}$ such that
\begin{equation}
\<\x_i^{k_j}-\x_i^*,-\sigma_i^{(k_j-1)}(\x_i^{k_j}-\u_i^{k_j-1})+\A_i^{\dag}(\bflambda^*)\>
\rightarrow 0,\quad i=1,\cdots,n.\label{eq:subsequence_to_zero}
\end{equation}
As $\p_i^{k_j}\equiv
-\sigma_i^{(k_j-1)}(\x_i^{k_j}-\u_i^{k_j-1})\in\partial
f_i(\x_i^{k_j})$ and $\partial f_i$ is bounded, we may assume that
$$\x_i^{k_j}\rightarrow \x_i^\infty\quad \mbox{and}\quad \p_i^{k_j}\rightarrow\p_i^\infty.$$
It can be easily proven that
$$\p_i^\infty\in\partial f_i(\x_i^\infty).$$
Then letting $j\rightarrow \infty$ in
(\ref{eq:subsequence_to_zero}), we have
\begin{equation}
\<\x_i^{\infty}-\x_i^*,\p_i^\infty+\A_i^{\dag}(\bflambda^*)\> = 0,\quad
i=1,\cdots,n.\label{eq:equal_zero}
\end{equation}
Then by $\p_i^{k_j}\in\partial f_i(\x_i^{k_j})$,
\begin{equation}
\sum\limits_{i=1}^n f_i(\x_i^{k_j}) \leq \sum\limits_{i=1}^n
f_i(\x_i^*)+\sum\limits_{i=1}^n \<\x_i^{k_j}-\x_i^*,\p_i^{k_j}\>.
\end{equation}
Letting $j\rightarrow \infty$ and making use of
(\ref{eq:equal_zero}), we have
\begin{eqnarray}
\begin{array}{rl}
\sum\limits_{i=1}^n f_i(\x_i^{\infty}) &\leq \sum\limits_{i=1}^n f_i(\x_i^*)+\sum\limits_{i=1}^n \<\x_i^{\infty}-\x_i^*,\p_i^\infty\>\\
&= \sum\limits_{i=1}^n f_i(\x_i^*)-\sum\limits_{i=1}^n \<\x_i^{\infty}-\x_i^*,\A_i^{\dag}(\bflambda^*)\>\\
&= \sum\limits_{i=1}^n f_i(\x_i^*)-\sum\limits_{i=1}^n \<\A_i(\x_i^{\infty}-\x_i^*),\bflambda^*\>\\
&=\sum\limits_{i=1}^n f_i(\x_i^*).
\end{array}
\end{eqnarray}
So together with the feasibility of
$\{(\x_1^\infty,\cdots,\x_n^\infty)\}$ we have that
$\{(\x_1^{k_j},\cdots,\x_n^{k_j})\}$ converges to an optimal
solution $\{(\x_1^\infty,\cdots,\x_n^\infty)\}$ to
(\ref{eq:model_problem_multivar}).

Finally, we set $\x_i^*=\x_i^\infty$ and $\bflambda^*$ be the
corresponding Lagrange multiplier $\bflambda^\infty$ in
Proposition~\ref{prop:converge_multivar}. By
Proposition~\ref{prop:unbounded_beta}, we have that
$$
\sum\limits_{i=1}^n\eta_i\|\mathbf{x}_i^{k_j}-\mathbf{x}_i^{\infty}\|^2
+\beta_{k_j}^{-2}\|\bflambda^{k_j}-\bflambda^{\infty}\|^2 \to 0.
$$ By Proposition~\ref{prop:converge_multivar}-1), we readily have
$$
\sum\limits_{i=1}^n\eta_i\|\mathbf{x}_i^{k}-\mathbf{x}_i^{\infty}\|^2
+\beta_{k}^{-2}\|\bflambda^{k}-\bflambda^{\infty}\|^2 \to 0. $$ So
$(\mathbf{x}_1^k,\cdots,\mathbf{x}_n^k) \to
(\mathbf{x}_1^{\infty},\cdots,\mathbf{x}_n^{\infty})$.\qed
\end{proof}

\section{Proof of Theorem~\ref{thm:convergence_unbounded_necessary}}
\begin{proof}{\bf(of Theorem~\ref{thm:convergence_unbounded_necessary})} We first prove
that there exist linear mappings $\B_i$, $i=1,\cdots,n$, such that
$\B_i$'s are not all zeros and $\sum\limits_{i=1}^n\B_i\A_i^{\dag}=0$.
Indeed, $\sum\limits_{i=1}^n\B_i\A_i^{\dag}=0$ is equivalent to
\begin{equation}
\sum\limits_{i=1}^n \mathbf{B}_i
\mathbf{A}_i^T=0,\label{eq:linear_mappings_equiv}
\end{equation}
where $\mathbf{A}_i$ and $\mathbf{B}_i$ are the matrix
representations of $\A_i$ and $\B_i$, respectively.
(\ref{eq:linear_mappings_equiv}) can be further written as
\begin{equation}
(\mathbf{A}_1 \,\,\cdots\,\, \mathbf{A}_n)\left(
\begin{array}{c}
\mathbf{B}_1^T\\
\vdots \\
\mathbf{B}_n^T
\end{array}
\right)=0. \label{eq:linear_mappings_equiv'}
\end{equation}
Recall that we have assumed that the solution to
$\sum\limits_{i=1}^n \A_i(\x_i)=\b$ is non-unique. So
$(\mathbf{A}_1 \,\,\cdots\,\, \mathbf{A}_n)$ is not full column
rank hence (\ref{eq:linear_mappings_equiv'}) has nonzero
solutions. Thus there exist $\B_i$'s such that they are not all
zeros and $\sum\limits_{i=1}^n\B_i\A_i^{\dag}=0$.

By Lemma~\ref{lem:subgradients},
\begin{equation}
-\sigma_i^{(k)}(\x_i^{k+1}-\u_i^{k})\in\partial
f_i(\x_i^{k+1}),\quad i=1,\cdots,n.
\end{equation}
As $\partial f_i$ is bounded, $i=1,\cdots,n$, so is
\begin{equation}
\sum\limits_{i=1}^n \B_i(\sigma_i^{(k)}(\x_i^{k+1}-\u_i^{k}))
=\beta_k(\v^{k+1}-\v^{k}),\label{eq:cancel_lambda}
\end{equation}
where $\v^{k}=\phi(\x_1^k,\cdots,\x_n^k)$ and
\begin{equation}
\phi(\x_1,\cdots,\x_n)=\sum\limits_{i=1}^n\eta_i\B_i(\x_i).
\end{equation}
In (\ref{eq:cancel_lambda}) we have utilized
$\sum\limits_{i=1}^n\B_i\A_i^{\dag}=0$ to cancel $\hat{\lambda}^k$,
whose boundedness is uncertain.

Then we have that there exists a constant $C>0$ such that
\begin{equation}
\|\v^{k+1}-\v^{k}\|\leq C\beta_k^{-1}.
\end{equation}

If $\sum\limits_{k=1}^{+\infty}\beta_k^{-1}<+\infty$, then
$\{\v^k\}$ is a Cauchy sequence, hence has a limit $\v^\infty$.
Define $ \v^*=\phi(\x_1^*,\cdots,\x_n^*), $ where
$(\x_1^*,\cdots,\x_n^*)$ is any optimal solution. Then
\begin{eqnarray}
\|\v^\infty - \v^*\| &=& \left\|\v^0 + \sum\limits_{k=0}^\infty
(\v^{k+1}-\v^k) - \v^*\right\| \\
&\geq & \|\v^0  - \v^*\| - \sum\limits_{k=0}^\infty
\|\v^{k+1}-\v^k\| \\
&\geq &\|\v^0 - \v^*\| - C\sum\limits_{k=0}^\infty \beta_k^{-1}.
\end{eqnarray}
So if $(\x_1^0,\cdots,\x_n^0)$ is initialized badly such that
\begin{equation}
\|\v^0 - \v^*\| > C\sum\limits_{k=0}^\infty
\beta_k^{-1},\label{eq:bad_init}
\end{equation}
then $\|\v^\infty - \v^*\|>0$, which implies that
$(\x_1^k,\cdots,\x_n^k)$ cannot converge to
$(\x_1^*,\cdots,\x_n^*)$. Note that (\ref{eq:bad_init}) is
possible because $\phi$ is not a zero mapping given the conditions
on $\B_i$.\qed
\end{proof}

\section{Proofs of Proposition~\ref{prop:optimality} and Theorem~\ref{thm:convergence_rate_2var}}

\begin{proof}{\bf (of Proposition~\ref{prop:optimality})} If $\tilde{\x}$ is optimal, it is easy to check that
\eqref{eq:constrained_optimality} holds.

Since $-\A_i^{\dag}(\lambda^*)\in\partial f_i(\x_i^*)$, we have
$$f(\tilde{\x})-f(\x^*)+\sum\limits_{i=1}^n\<\A_i^{\dag}(\bflambda^*),\tilde{\x}_i-\x_i^*\> \geq 0.$$
So if (\ref{eq:constrained_optimality}) holds, we have
\begin{eqnarray}
&f(\tilde{\x})-f(\x^*)+\sum\limits_{i=1}^n\<\A_i^{\dag}(\bflambda^*),\tilde{\x}_i-\x_i^*\>=0,&\label{eq:constrained_optimality1}\\
&\sum\limits_{i=1}^n
\A_i(\tilde{\x}_i)-\b=0.&\label{eq:constrained_optimality2}
\end{eqnarray}
With (\ref{eq:constrained_optimality2}), we have
\begin{equation}
\sum\limits_{i=1}^n\<\A_i^{\dag}(\bflambda^*),\tilde{\x}_i-\x_i^*\>=\sum\limits_{i=1}^n\<\bflambda^*,\A_i(\tilde{\x}_i-\x_i^*)\>
=\<\bflambda^*,\sum\limits_{i=1}^n\A_i(\tilde{\x}_i-\x_i^*)\>=0.
\end{equation}
So (\ref{eq:constrained_optimality1}) reduces to
$f(\tilde{\x})=f(\x^*)$. As $\tilde{\x}$ satisfies the feasibility
condition, it is an optimal solution to
(\ref{eq:model_problem_multivar}).\qed
\end{proof}

\begin{proof}{\bf(of Theorem~\ref{thm:convergence_rate_2var})} We first deduce
\begin{eqnarray}
\begin{array}{rl}
&\lbar\sum\limits_{i=1}^n\A_i(\x_i^{k+1})-\b\rbar^2\\
=&\lbar\sum\limits_{i=1}^n\A_i(\x_i^{k})-\b+\sum\limits_{i=1}^n\A_i(\x_i^{k+1}-\x_i^{k})\rbar^2\\
\leq&\left(\lbar\sum\limits_{i=1}^n\A_i(\x_i^{k})-\b\rbar+\sum\limits_{i=1}^n\lbar\A_i(\x_i^{k+1}-\x_i^{k})\rbar\right)^2\\
\leq&(n+1)\left(\lbar\sum\limits_{i=1}^n\A_i(\x_i^{k})-\b\rbar^2+\sum\limits_{i=1}^n\lbar\A_i\rbar^2\lbar\x_i^{k+1}-\x_i^{k}\rbar^2\right)\\
\leq&(n+1)\left(\beta_k^{-2}\lbar\bflambda_k-\hat{\bflambda}_k\rbar^2
+\max\left\{\dfrac{\lbar\A_i\rbar^2}{\eta_i-n\lbar\A_i\rbar^2}\right\}\sum\limits_{i=1}^n\left(\eta_i-n\lbar\A_i\rbar^2\right)\lbar\x_i^{k+1}-\x_i^{k}\rbar^2\right)\\
\leq&(n+1)\max\left\{1,\left\{\dfrac{\lbar\A_i\rbar^2}{\eta_i-n\lbar\A_i\rbar^2}\right\}\right\}\left(\beta_k^{-2}\lbar\bflambda_k-\hat{\bflambda}_k\rbar^2
+\sum\limits_{i=1}^n\left(\eta_i-n\lbar\A_i\rbar^2\right)\lbar\x_i^{k+1}-\x_i^{k}\rbar^2\right)\\
=&\alpha^{-1}\left(\beta_k^{-2}\lbar\bflambda_k-\hat{\bflambda}_k\rbar^2
+\sum\limits_{i=1}^n\left(\eta_i-n\lbar\A_i\rbar^2\right)\lbar\x_i^{k+1}-\x_i^{k}\rbar^2\right).
\end{array}\label{eq:bounding_A(x^{k+1})}
\end{eqnarray}
By Proposition~\ref{prop:basic-identity-multivar}, we have
\begin{equation}
\begin{array}{rl}
&\beta_k^{-1}\sum\limits_{i=1}^n
\<\x_i^{k+1}-\x_i^*,-\sigma_i^{(k)}(\x_i^{k+1}-\u_i^k)+\A_i^{\dag}(\bflambda^*)\>\\
&+\dfrac{1}{2}\sum\limits_{i=1}^n\left(\eta_i-n\lbar\A_i\rbar^2\right)\lbar\x_i^{k+1}-\x_i^{k}\rbar^2+\dfrac{1}{2}\beta_k^{-2}\|\bflambda^{k}-\hat{\bflambda}^k\|^2\\
\leq&
\dfrac{1}{2}\left(\sum\limits_{i=1}^n\eta_i\lbar\x_i^{k}-\x_i^*\rbar^2+\beta_k^{-2}\lbar\bflambda^{k}-\bflambda^*\rbar^2\right)\\
&-\dfrac{1}{2}\left(\sum\limits_{i=1}^n\eta_i\lbar\x_i^{k+1}-\x_i^*\rbar^2+\beta_{k+1}^{-2}\lbar\bflambda^{k+1}-\bflambda^*\rbar^2\right).
\label{eq:recursive1}
\end{array}
\end{equation}
So by Lemma~\ref{lem:subgradients} and combining the above
inequalities, we have
\begin{equation}
\begin{array}{rl}
&\beta_k^{-1}\left( f(\x^{k+1})-f(\x^*)+\sum\limits_{i=1}^n\<\x_i^{k+1}-\x_i^*,\A_i^{\dag}(\bflambda^*)\>+\dfrac{\alpha\beta_0}{2}\lbar\sum\limits_{i=1}^n\A_i(\x_i^{k+1})-\b\rbar^2\right)\\
\leq &\beta_k^{-1}\left(\sum\limits_{i=1}^n
\<\x_i^{k+1}-\x_i^*,-\sigma_i^{(k)}(\x_i^{k+1}-\u_i^k)\> +
\sum\limits_{i=1}^n
\<\x_i^{k+1}-\x_i^*,\A_i^\dag(\bflambda^*)\>\right)\\
&+\dfrac{\alpha}{2}\lbar\sum\limits_{i=1}^n\A_i(\x_i^{k+1})-\b\rbar^2\\
\leq &\beta_k^{-1}\sum\limits_{i=1}^n
\<\x_i^{k+1}-\x_i^*,-\sigma_i^{(k)}(\x_i^{k+1}-\u_i^k)+\A_i^{\dag}(\bflambda^*)\>\\
&+\dfrac{1}{2}\beta_k^{-2}\lbar\bflambda_k-\hat{\bflambda}_k\rbar^2
+\dfrac{1}{2}\sum\limits_{i=1}^n\left(\eta_i-n\lbar\A_i\rbar^2\right)\lbar\x_i^{k+1}-\x_i^{k}\rbar^2\\
\leq & \dfrac{1}{2}\left(\sum\limits_{i=1}^n\eta_i\lbar\x_i^{k}-\x_i^*\rbar^2+\beta_k^{-2}\lbar\bflambda^{k}-\bflambda^*\rbar^2\right)\\
&-\dfrac{1}{2}\left(\sum\limits_{i=1}^n\eta_i\lbar\x_i^{k+1}-\x_i^*\rbar^2+\beta_{k+1}^{-2}\lbar\bflambda^{k+1}-\bflambda^*\rbar^2\right).
\label{eq:recursive3}
\end{array}
\end{equation}
Here we use the fact that $\beta_k \geq \beta_0$, which is guaranteed by \eqref{eq:update_beta_multivar} and \eqref{eq:update_rho}.
Summing the above inequalities from $k=0$ to $K$, and dividing
both sides with $\sum\limits_{k=0}^K \beta_k^{-1}$, we have
\begin{eqnarray}
&&\sum\limits_{k=0}^K \gamma_k f(\x^{k+1}) - f(\x^*)
+\sum\limits_{i=1}^n\<\sum\limits_{k=0}^K \gamma_k\x_i^{k+1}-\x_i^*,\A_i^{\dag}(\bflambda^*)\>\label{eq:recursive4-line1}\\
&&+ \dfrac{\alpha\beta_0}{2}\sum\limits_{k=0}^K \gamma_k \lbar\sum\limits_{i=1}^n\A_i(\x_i^{k+1})-\b\rbar^2\\
&\leq& \dfrac{1}{2\sum\limits_{k=0}^K
\beta_k^{-1}}\left(\sum\limits_{i=1}^n\eta_i\lbar\x_i^{0}-\x_i^*\rbar^2+\beta_0^{-2}\lbar\bflambda^{0}-\bflambda^*\rbar^2\right).
\label{eq:recursive4}
\end{eqnarray}
Next, by the convexity of $f$ and the squared Frobenius norm
$\|\cdot\|^2$, we have
\begin{eqnarray}
&&f(\bar{\x}^K) - f(\x^*)
+\sum\limits_{i=1}^n\<\bar{\x}_i^{K}-\x_i^*,\A_i^{\dag}(\bflambda^*)\>+\dfrac{\alpha\beta_0}{2} \lbar\sum\limits_{i=1}^n\A_i(\bar{\x}_i^{K})-\b\rbar^2\label{eq:recursive5-line1}\\
&\leq& \sum\limits_{k=0}^K \gamma_k f(\x^{k+1}) - f(\x^*)
+\sum\limits_{i=1}^n\<\sum\limits_{k=0}^K \gamma_k\x_i^{k+1}-\x_i^*,\A_i^{\dag}(\bflambda^*)\>\\
&&+ \dfrac{\alpha\beta_0}{2}\sum\limits_{k=0}^K \gamma_k
\lbar\sum\limits_{i=1}^n\A_i(\x_i^{k+1})-\b\rbar^2.
\label{eq:recursive5}
\end{eqnarray}
Combining (\ref{eq:recursive4-line1})-(\ref{eq:recursive4}) and
(\ref{eq:recursive5-line1})-(\ref{eq:recursive5}), we have
\begin{eqnarray}
&&f(\bar{\x}^K) - f(\x^*)
+\sum\limits_{i=1}^n\<\bar{\x}_i^{K}-\x_i^*,\A_i^{\dag}(\bflambda^*)\>+\dfrac{\alpha\beta_0}{2} \lbar\sum\limits_{i=1}^n\A_i(\bar{\x}_i^{K})-\b\rbar^2\\
&\leq& \dfrac{1}{2\sum\limits_{k=0}^K
\beta_k^{-1}}\left(\sum\limits_{i=1}^n\eta_i\lbar\x_i^{0}-\x_i^*\rbar^2+\beta_0^{-2}\lbar\bflambda^{0}-\bflambda^*\rbar^2\right).
\label{eq:recursive6}
\end{eqnarray}\qed
\end{proof}

\section{Proof of Theorem~\ref{thm:better_eta}}
We only need to prove the following proposition. Then by the same
technique for proving Theorem~\ref{thm:converge_multivar}, we can prove Theorem~\ref{thm:better_eta}.
\begin{proposition}\label{prop:basic-identity-multivar-equiv}
For $\{(\x_1^{k},\cdots, \x_{2n}^k,\lambda^k)\}$ generated by
Algorithm~\ref{alg:LADMPSAP-multivar_equiv} and a KKT point
$(\x_1^*,\cdots,\x_{2n}^*,\bflambda^*)$ of problem
(\ref{eq:model_problem_multivar_equiv}), we have that
\begin{eqnarray}
&&\beta_k\sum\limits_{i=1}^{2n}\sigma_i^{(k)}\|\x_i^{k+1}-\x_i^*\|^2+\|\bflambda^{k+1}-\bflambda^*\|^2 \label{eq:basic-identity-multivar-equiv-line1}\\
&\leq &\beta_k\sum\limits_{i=1}^n\sigma_i^{(k)}\|\x_i^{k}-\x_i^*\|^2+\|\bflambda^{k}-\bflambda^*\|^2\\
&&-2\beta_k\sum\limits_{i=1}^{2n} \<\x_i^{k+1}-\x_i^*,-\sigma_i^{(k)}(\x_i^{k+1}-\u_i^k)+\hat{\A}_i^{\dag}(\bflambda^*)\>\\
&&-\beta_k\sum\limits_{i=1}^{n} \left(\sigma_i^{(k)}-\beta_k(n\|\A_i\|^2+2)\right)\|\x_i^{k+1}-\x_i^k\|^2\\
&&-\beta_k\sum\limits_{i=n+1}^{2n} \left(\sigma_i^{(k)}-2\beta_k\right)\|\x_i^{k+1}-\x_i^k\|^2\\
&&-\|\bflambda^k-\hat{\bflambda}^{k}\|^2.\label{eq:basic-identity-multivar-equiv}
\end{eqnarray}
\end{proposition}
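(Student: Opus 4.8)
\textbf{Proof strategy for Proposition~\ref{prop:basic-identity-multivar-equiv}.} The plan is to follow the proof of Proposition~\ref{prop:basic-identity-multivar} almost verbatim, the only genuinely new ingredient being a sharper estimate of $\|\sum_{i=1}^{2n}\hat{\A}_i(\x_i^{k+1}-\x_i^k)\|^2$ that exploits the sparse structure of the lifted operators $\hat{\A}_i$. First I would observe that problem \eqref{eq:model_problem_multivar_equiv} is itself an instance of the model \eqref{eq:model_problem_multivar}, with $2n$ blocks, operators $\hat{\A}_i$, right-hand side $\hat{\b}$, and adjoints given by \eqref{eq:adjoint_hat_A}; hence Lemmas~\ref{lem:KKT}, \ref{lem:subgradients} and \ref{lem:basic-identity} all apply with $n$ replaced by $2n$ and $\A_i$ by $\hat{\A}_i$. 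Using $\sigma_i^{(k)}(\x_i^k-\u_i^k)=\hat{\A}_i^{\dag}(\hat{\bflambda}^k)$ together with the feasibility relation $\sum_{i=1}^{2n}\hat{\A}_i(\x_i^*)=\hat{\b}$, I would collapse the cross terms exactly as in the displayed chain of equalities in the proof of Proposition~\ref{prop:basic-identity-multivar}, arriving at the analogue of \eqref{eq:basic-identity-multivar-line7}: namely that the residual equals $\|\bflambda^{k+1}-\bflambda^k\|^2+\beta_k^2\|\sum_{i=1}^{2n}\hat{\A}_i(\x_i^{k+1}-\x_i^k)\|^2-\|\bflambda^k-\hat{\bflambda}^k\|^2$.

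The crux, which is what buys the sharper requirement $\eta_i>n\|\A_i\|^2+2$ and $\eta_{n+i}>2$, is bounding $\|\sum_{i=1}^{2n}\hat{\A}_i(\x_i^{k+1}-\x_i^k)\|^2$. From the explicit definition \eqref{eq:redefine_equiv}, the vector $\sum_{i=1}^{2n}\hat{\A}_i(\x_i^{k+1}-\x_i^k)$ has first block $\sum_{i=1}^{n}\A_i(\x_i^{k+1}-\x_i^k)$ and, for each $i=1,\dots,n$, a block equal to $(\x_i^{k+1}-\x_i^k)-(\x_{n+i}^{k+1}-\x_{n+i}^k)$. Since these blocks are orthogonal, the squared norm splits as $\|\sum_{i=1}^{n}\A_i(\x_i^{k+1}-\x_i^k)\|^2+\sum_{i=1}^{n}\|(\x_i^{k+1}-\x_i^k)-(\x_{n+i}^{k+1}-\x_{n+i}^k)\|^2$. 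I would bound the first summand by $n\sum_{i=1}^{n}\|\A_i\|^2\|\x_i^{k+1}-\x_i^k\|^2$ (Cauchy--Schwarz over the $n$ terms, as in Proposition~\ref{prop:basic-identity-multivar}), and each term of the second sum by $2\|\x_i^{k+1}-\x_i^k\|^2+2\|\x_{n+i}^{k+1}-\x_{n+i}^k\|^2$ via the elementary inequality $\|a-b\|^2\le 2\|a\|^2+2\|b\|^2$. Adding these, $\|\sum_{i=1}^{2n}\hat{\A}_i(\x_i^{k+1}-\x_i^k)\|^2\le\sum_{i=1}^{n}(n\|\A_i\|^2+2)\|\x_i^{k+1}-\x_i^k\|^2+2\sum_{i=n+1}^{2n}\|\x_i^{k+1}-\x_i^k\|^2$, which are precisely the coefficients that appear in \eqref{eq:basic-identity-multivar-equiv}. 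Note that the key difference from Proposition~\ref{prop:basic-identity-multivar} is that Cauchy--Schwarz is applied only to the $n$ operators $\A_i$ occupying the first block, rather than to all $2n$ lifted operators at once, while the consistency blocks each contribute only the mild factor $2$.

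Plugging this estimate into the $2n$-block basic identity and rearranging, exactly as in the final lines of the proof of Proposition~\ref{prop:basic-identity-multivar}, yields \eqref{eq:basic-identity-multivar-equiv-line1}--\eqref{eq:basic-identity-multivar-equiv}, completing the proof of the proposition. Theorem~\ref{thm:better_eta} then follows from it by the same argument used to deduce Theorem~\ref{thm:converge_multivar} from Proposition~\ref{prop:basic-identity-multivar}: the hypotheses $\eta_i>n\|\A_i\|^2+2$ and $\eta_{n+i}>2$ make the bracketed coefficients $\sigma_i^{(k)}-\beta_k(n\|\A_i\|^2+2)$ and $\sigma_{n+i}^{(k)}-2\beta_k$ strictly positive, so one recovers the analogue of Proposition~\ref{prop:converge_multivar} and then runs the Fej\'er-monotonicity and accumulation-point argument unchanged. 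I expect the only real obstacle to be notational bookkeeping: keeping the partition into the blocks $\{1,\dots,n\}$ and $\{n+1,\dots,2n\}$ consistent through the adjoint formula \eqref{eq:adjoint_hat_A} and the updates \eqref{eq:update_lambda_multivar-equiv}--\eqref{eq:LADMPSAP_update_xi2}; the analytic content is entirely routine once the orthogonal decomposition of $\|\sum_i\hat{\A}_i(\cdot)\|^2$ is in hand.
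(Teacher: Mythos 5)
Your proposal is correct and follows essentially the same route as the paper's own proof: continue the $2n$-block basic identity to the analogue of \eqref{eq:basic-identity-multivar-line7}, split $\lbar\sum_{i=1}^{2n}\hat{\A}_i(\x_i^{k+1}-\x_i^k)\rbar^2$ by block orthogonality into $\lbar\sum_{i=1}^{n}\A_i(\x_i^{k+1}-\x_i^k)\rbar^2$ plus the consistency blocks, then apply Cauchy--Schwarz over the $n$ first-block operators and the inequality $\|a-b\|^2\le 2\|a\|^2+2\|b\|^2$ to the rest. This is exactly the paper's argument, coefficients included.
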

\begin{proof}
We continue from (\ref{eq:basic-identity-multivar-line7}):
\begin{eqnarray}
&&-2\beta_k\sum\limits_{i=1}^{2n}\sigma_i^{(k)}\<\x_i^{k+1}-\x_i^*,\x_i^{k}-\u_i^k\>+2\<\bflambda^{k+1}-\bflambda^k,\bflambda^{k+1}\>\\
&=&\|\bflambda^{k+1}-\bflambda^k\|^2+\beta_k^2\left\|\sum\limits_{i=1}^{2n}\hat{\A}_i(\x_i^{k+1}-\x_i^k)\right\|^2-\|\bflambda^{k}-\hat{\bflambda}^k\|^2\\
&=&\|\bflambda^{k+1}-\bflambda^k\|^2+\beta_k^2\left\|\sum\limits_{i=1}^{n}\A_i(\x_i^{k+1}-\x_i^k)\right\|^2\\
&&+\beta_k^2\sum\limits_{i=1}^{n}\left\|(\x_i^{k+1}-\x_i^k)-(\x_{n+i}^{k+1}-\x_{n+i}^k)\right\|^2-\|\bflambda^{k}-\hat{\bflambda}^k\|^2\\
&\leq&\|\bflambda^{k+1}-\bflambda^k\|^2+n\beta_k^2\sum\limits_{i=1}^{n}\|\A_i\|^2\|\x_i^{k+1}-\x_i^k\|^2\\
&&+2\beta_k^2\sum\limits_{i=1}^{n}\left(\|\x_i^{k+1}-\x_i^k\|^2+\|\x_{n+i}^{k+1}-\x_{n+i}^k\|^2\right)-\|\bflambda^{k}-\hat{\bflambda}^k\|^2.
\end{eqnarray}
Then we can have
(\ref{eq:basic-identity-multivar-equiv-line1})-(\ref{eq:basic-identity-multivar-equiv}).\qed
\end{proof}

\section{Proof of Theorem~\ref{thm:general_convergence_unbounded}}
To prove Theorem~\ref{thm:general_convergence_unbounded}, we need the following
proposition:

\begin{proposition}\label{prop:inequ_para}
For $\{(\x_1^{k},\cdots,\x_n^k,\lambda^k)\}$ generated by
Algorithm~\ref{alg:general-LADMPSAP-multivar} and a KKT point
$(\x_1^*,\cdots, \x_n^*,\bflambda^*)$ of problem
(\ref{eq:model_problem_multivar}) with $f_i$ described in
Section~\ref{sec:G-LADMPSAP}, we have that
\begin{eqnarray}
&&\sum\limits_{i=1}^n \left( f_i(\x_i^{k+1})-f_i(\x_i^*)+\<\A_i^{\dag} (\lambda^*),\x_i^{k+1}-\x_i^*\> \right) \hspace*{4.9cm} \label{line_1}\\
&\leq& \frac{1}{2}\sum\limits_{i=1}^n\tau_i^{(k)}\left(\|\x_i^{k}-\x_i^*\|^2-\|\x_i^{k+1}-\x_i^*\|^2\right)+\frac{1}{2\beta_k}\left(\|\lambda^{k}-\lambda^*\|^2-\|\lambda^{k+1}-\lambda^*\|^2\right)\hspace*{1cm}\\
&&-\frac{1}{2}\sum\limits_{i=1}^n\left(\tau_i^{(k)}-L_i-n\beta_k\|\A_i\|^2\right)\|\x_i^{k+1}-\x_i^k\|^2-\frac{1}{2\beta_k}\|\hat\lambda^{k}-\lambda^k\|^2\label{line_end}
\end{eqnarray}
\end{proposition}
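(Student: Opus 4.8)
The plan is to follow the proof of Proposition~\ref{prop:basic-identity-multivar} almost verbatim, the only genuinely new ingredient being a descent-lemma estimate for the smooth parts $g_i$. First I would record the optimality condition of subproblem \eqref{eq:general_LADMPSAP_update_xi}: there exist $\p_i^{k+1}\in\partial h_i(\x_i^{k+1})$, $i=1,\cdots,n$, with
\[
\p_i^{k+1}+\nabla g_i(\x_i^k)+\tau_i^{(k)}(\x_i^{k+1}-\x_i^k)+\A_i^{\dag}(\hat{\bflambda}^k)=\bzero .
\]
By convexity of $h_i$, $h_i(\x_i^{k+1})-h_i(\x_i^*)\le\<\p_i^{k+1},\x_i^{k+1}-\x_i^*\>$; and by combining convexity of $g_i$ at $\x_i^k$ with the descent lemma implied by \eqref{eq:Lipschitz-continue}, one gets $g_i(\x_i^{k+1})-g_i(\x_i^*)\le\<\nabla g_i(\x_i^k),\x_i^{k+1}-\x_i^*\>+\tfrac{L_i}{2}\|\x_i^{k+1}-\x_i^k\|^2$. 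Adding the two bounds and eliminating $\p_i^{k+1}+\nabla g_i(\x_i^k)$ via the optimality condition yields $f_i(\x_i^{k+1})-f_i(\x_i^*)\le\<-\tau_i^{(k)}(\x_i^{k+1}-\x_i^k)-\A_i^{\dag}(\hat{\bflambda}^k),\x_i^{k+1}-\x_i^*\>+\tfrac{L_i}{2}\|\x_i^{k+1}-\x_i^k\|^2$.

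Next I would add $\<\A_i^{\dag}(\bflambda^*),\x_i^{k+1}-\x_i^*\>$ to both sides and apply the polarization identity \eqref{eq:inner_prod_to_norms} to the term $-\tau_i^{(k)}\<\x_i^{k+1}-\x_i^k,\x_i^{k+1}-\x_i^*\>$, turning it into $\tfrac{\tau_i^{(k)}}{2}\big(\|\x_i^k-\x_i^*\|^2-\|\x_i^{k+1}-\x_i^*\|^2-\|\x_i^{k+1}-\x_i^k\|^2\big)$. Summing over $i$ leaves the cross term $\sum_{i=1}^n\<\A_i^{\dag}(\bflambda^*-\hat{\bflambda}^k),\x_i^{k+1}-\x_i^*\>$, which by the KKT feasibility $\sum_i\A_i(\x_i^*)=\b$ (Lemma~\ref{lem:KKT}) and the multiplier update \eqref{eq:update_lambda_multivar} equals $\beta_k^{-1}\<\bflambda^*-\hat{\bflambda}^k,\bflambda^{k+1}-\bflambda^k\>$.

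Then I would process this inner product exactly as in Proposition~\ref{prop:basic-identity-multivar}: splitting $\bflambda^*-\hat{\bflambda}^k=(\bflambda^*-\bflambda^{k+1})+(\bflambda^{k+1}-\hat{\bflambda}^k)$ and using polarization twice gives $2\<\bflambda^{k+1}-\bflambda^k,\bflambda^*-\hat{\bflambda}^k\>=\|\bflambda^*-\bflambda^k\|^2-\|\bflambda^*-\bflambda^{k+1}\|^2+\|\bflambda^{k+1}-\hat{\bflambda}^k\|^2-\|\bflambda^k-\hat{\bflambda}^k\|^2$. Since $\bflambda^{k+1}-\hat{\bflambda}^k=\beta_k\sum_i\A_i(\x_i^{k+1}-\x_i^k)$, the same Cauchy--Schwarz bound used there, $\|\sum_i\A_i(\x_i^{k+1}-\x_i^k)\|^2\le n\sum_i\|\A_i\|^2\|\x_i^{k+1}-\x_i^k\|^2$, converts $\|\bflambda^{k+1}-\hat{\bflambda}^k\|^2$ into $n\beta_k^2\sum_i\|\A_i\|^2\|\x_i^{k+1}-\x_i^k\|^2$. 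Dividing this block by $2\beta_k$, collecting the $\|\x_i^{k+1}-\x_i^k\|^2$ coefficients (the $L_i$ from the descent lemma and the $n\beta_k\|\A_i\|^2$ from Cauchy--Schwarz both subtract off $\tau_i^{(k)}$), and keeping the telescoping $\bflambda$- and $\x_i$-differences gives precisely \eqref{line_1}--\eqref{line_end}.

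The main obstacle is the bookkeeping around the cross term $\sum_i\<\A_i^{\dag}(\bflambda^*-\hat{\bflambda}^k),\x_i^{k+1}-\x_i^*\>$: it must be reorganized so that the multiplier part telescopes, the term $-\tfrac{1}{2\beta_k}\|\hat{\bflambda}^k-\bflambda^k\|^2$ survives with the correct sign, and the Cauchy--Schwarz slack is fully absorbed into the coefficients $\tau_i^{(k)}-L_i-n\beta_k\|\A_i\|^2$ --- which, under the hypothesis $\tau_i^{(k)}=T_i+\beta_k\eta_i$ with $T_i\ge L_i$ and $\eta_i>n\|\A_i\|^2$, are exactly the quantities that are needed to be non-negative for the subsequent convergence argument. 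Apart from this, everything is routine manipulation mirroring Lemma~\ref{lem:basic-identity} and Proposition~\ref{prop:basic-identity-multivar}.
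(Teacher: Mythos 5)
Your proposal is correct and follows essentially the same route as the paper's proof: the subproblem optimality condition, convexity of $h_i$ combined with the descent lemma and convexity of $g_i$ (yielding the $\tfrac{L_i}{2}\|\x_i^{k+1}-\x_i^k\|^2$ slack), the polarization identities, the relation $\bflambda^{k+1}-\hat{\bflambda}^k=\beta_k\sum_{i}\A_i(\x_i^{k+1}-\x_i^k)$, and the bound $\lbar\sum_i\A_i(\x_i^{k+1}-\x_i^k)\rbar^2\le n\sum_i\|\A_i\|^2\|\x_i^{k+1}-\x_i^k\|^2$. The only cosmetic difference is that you substitute $(\x_i^*,\bflambda^*)$ and invoke $\sum_i\A_i(\x_i^*)=\b$ from the outset, whereas the paper first derives the inequality for arbitrary $(\x_i,\bflambda)$ and specializes at the end.
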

\begin{proof}
It can be observed that
$$
0\in\partial h_i(\x_i^{k+1})+\nabla g_i(\x_i^k)+\A_i^{\dag} (\hat
\lambda^k)+\tau_i^{(k)}(\x_i^{k+1}-\x_i^k).$$ So we have
$$h_i(\x_i)-h_i(\x_i^{k+1}) \geq \<-\nabla g_i(\x_i^k)-\A_i^{\dag}
(\hat
\lambda^k)-\tau_i^{(k)}(\x_i^{k+1}-\x_i^k),\x_i-\x_i^{k+1}\>,
\quad\forall \x_i,$$ and
\begin{eqnarray}
&&\sum\limits_{i=1}^n f_i(\x_i^{k+1})= \sum\limits_{i=1}^n \left(h_i(\x_i^{k+1})+g_i(\x_i^{k+1})\right)\notag\\
&\leq& \sum\limits_{i=1}^n \left(h_i(\x_i^{k+1})+g_i(\x_i^k)+\<\nabla g_i(\x_i^k),\x_i^{k+1}-\x_i^k\>+\frac{L_i}{2}\|\x_i^{k+1}-\x_i^k\|^2\right)\notag \\
&=& \sum\limits_{i=1}^n \Big(h_i(\x_i^{k+1})+g_i(\x_i^k)+\<\nabla g_i(\x_i^k),\x_i-\x_i^k\>+\<\nabla g_i(\x_i^k),\x_i^{k+1}-\x_i\>\notag\\
&&+\frac{L_i}{2}\|\x_i^{k+1}-\x_i^k\|^2\Big)\notag\\
&\leq& \sum\limits_{i=1}^n \left(g_i(\x_i)+h_i(\x_i)+
\<\A_i^{\dag} (\hat
\lambda^k)+\tau_i^{(k)}(\x_i^{k+1}-\x_i^k),\x_i-\x_i^{k+1}\>+\frac{L_i}{2}\|\x_i^{k+1}-\x_i^k\|^2\right).\notag
\end{eqnarray}
On the one hand,
\begin{eqnarray}
&&\sum\limits_{i=1}^n \left(f_i(\x_i^{k+1})-f_i(\x_i)+\<\A_i^{\dag} (\hat \lambda^k),\x_i^{k+1}-\x_i\>\right)-\<\sum\limits_{i=1}^n \A_i(\x_i^{k+1})-\b,\hat\lambda^k-\lambda\>\hspace*{1.5cm}\label{pop1_line1}\\
&\leq& \sum\limits_{i=1}^n \left(-\tau_i^{(k)}\<\x_i^{k+1}-\x_i^k,\x_i^{k+1}-\x_i\>+\frac{L_i}{2}\|\x_i^{k+1}-\x_i^k\|^2\right)-\<\sum\limits_{i=1}^n \A_i(\x_i^{k+1})-\b,\hat\lambda^k-\lambda\>\notag\\
&=& \sum\limits_{i=1}^n \left(-\tau_i^{(k)}\<\x_i^{k+1}-\x_i^k,\x_i^{k+1}-\x_i\>+\frac{L_i}{2}\|\x_i^{k+1}-\x_i^k\|^2\right)-\frac{1}{\beta_k}\<\lambda^{k+1}-\lambda^k,\hat\lambda^k-\lambda\>\notag\\
&=& \sum\limits_{i=1}^n \left[\frac{\tau_i^{(k)}}{2}\left(\|\x_i^k-\x_i\|^2-\|\x_i^{k+1}-\x_i\|^2-\|\x_i^{k+1}-\x_i^k\|^2\right)+\frac{L_i}{2}\|\x_i^{k+1}-\x_i^k\|^2\right]\notag\\
&&-\frac{1}{2\beta_k}\left(\|\lambda^{k+1}-\lambda\|^2-\|\lambda^{k}-\lambda\|^2+\|\hat{\lambda}^{k}-\lambda^k\|^2-\|\lambda^{k+1}-\hat{\lambda}^k\|^2\right)\notag\\
&=& \sum\limits_{i=1}^n \left[\frac{\tau_i^{(k)}}{2}\left(\|\x_i^k-\x_i\|^2-\|\x_i^{k+1}-\x_i\|^2-\|\x_i^{k+1}-\x_i^k\|^2\right)+\frac{L_i}{2}\|\x_i^{k+1}-\x_i^k\|^2\right]\notag\\
&&-\frac{1}{2\beta_k}\left(\|\lambda^{k+1}-\lambda\|^2-\|\lambda^{k}-\lambda\|^2+\|\hat{\lambda}^{k}-\lambda^k\|^2-\beta_k^2\lbar\sum\limits_{i=1}^n\A_i(\x_i^{k+1}-\x_i^k)\rbar^2\right)\notag\\
&\leq&\frac{1}{2}\sum\limits_{i=1}^n
\tau_i^{(k)}\left(\|\x_i^{k}-\x_i\|^2-\|\x_i^{k+1}-\x_i\|^2\right)-\frac{1}{2}\sum\limits_{i=1}^n\left(\tau_i^{(k)}
-L_i-n\beta_k\|\A_i\|^2\right)\|\x_i^{k+1}-\x_i^k\|^2\notag\\
&&+\frac{1}{2\beta_k}\left(\|\lambda^{k}-\lambda\|^2-\|\lambda^{k+1}-\lambda\|^2-\|\hat\lambda^{k}-\lambda^k\|^2\right).
\label{pop1_line_end}
\end{eqnarray}
On the other hand,
\begin{eqnarray}
&&\sum\limits_{i=1}^n \left(f_i(\x_i^{k+1})-f_i(\x_i)+\<\A_i^{\dag} (\lambda),\x_i^{k+1}-\x_i\>\right)-\<\sum\limits_{i=1}^n \A_i(\x_i)-\b,\hat\lambda^k-\lambda\>\notag\\
&=& \sum\limits_{i=1}^n \left(f_i(\x_i^{k+1})-f_i(\x_i)+\<\A_i^{\dag} (\hat \lambda^k),\x_i^{k+1}-\x_i\>\right)-\<\sum\limits_{i=1}^n \A_i(\x_i^{k+1})-\b,\hat\lambda^k-\lambda\>.\notag
\end{eqnarray}
So we have
\begin{eqnarray}
&&\sum\limits_{i=1}^n \left(f_i(\x_i^{k+1})-f_i(\x_i)+\<\A_i^{\dag} (\lambda),\x_i^{k+1}-\x_i\>\right)-\<\sum\limits_{i=1}^n \A_i(\x_i)-\b,\hat\lambda^k-\lambda\>\notag\\
&\leq&\frac{1}{2}\sum\limits_{i=1}^n \tau_i^{(k)}\left(\|\x_i^{k}-\x_i\|^2-\|\x_i^{k+1}-\x_i\|^2\right)\notag\\
&&-\frac{1}{2}\sum\limits_{i=1}^n\left(\tau_i^{(k)}-L_i-n\beta_k\|\A_i\|^2\right)\|\x_i^{k+1}-\x_i^k\|^2\notag\\
&&+\frac{1}{2\beta_k}\left(\|\lambda^{k}-\lambda\|^2-\|\lambda^{k+1}-\lambda\|^2-\|\hat\lambda^{k}-\lambda^k\|^2\right).\notag
\end{eqnarray}
Let $\x_i=\x_i^*$ and $\lambda=\lambda^*$, we have
\begin{eqnarray}
&&\sum\limits_{i=1}^n \left(f_i(\x_i^{k+1})-f_i(\x_i^*)+\<\A_i^{\dag} (\lambda^*),\x_i^{k+1}-\x_i^*\>\right) \notag\\
&\leq& \frac{1}{2}\sum\limits_{i=1}^n \tau_i^{(k)}\left[\|\x_i^{k}-\x_i^*\|^2-\|\x_i^{k+1}-\x_i^*\|^2\right]+\frac{1}{2\beta_k}\left(\|\lambda^{k}-\lambda^*\|^2-\|\lambda^{k+1}-\lambda^*\|^2\right)\notag\\
&&-\frac{1}{2}\sum\limits_{i=1}^n\left(\tau_i^{(k)}-L_i-n\beta_k\|\A_i\|^2\right)\|\x_i^{k+1}-\x_i^k\|^2-\frac{1}{2\beta_k}\|\hat\lambda^{k}-\lambda^k\|^2.\notag
\end{eqnarray}
\end{proof}

\begin{proof}{\bf (of Theorem~\ref{thm:general_convergence_unbounded})}
As $\x^*$ minimizes $\sum\limits_{i=1}^n
f(\x_i)+\<\lambda^*,\sum\limits_{i=1}^n \A_i(\x_i)-\b\>$, we have
$$
0\leq \sum\limits_{i=1}^n \left(f_i(\x_i^{k+1})-f_i(\x_i^*)+\<\A_i^{\dag} (\lambda^*),\x_i^{k+1}-\x_i^*\>\right).
$$
By Proposition \ref{prop:inequ_para}, we have
\begin{eqnarray}
&&\sum\limits_{i=1}^n\frac{1}{2}\left(\tau_i^{(k)}-L_i-n\beta_k\|\A_i\|^2\right)\|\x_i^{k+1}-\x_i^k\|^2+\frac{1}{2\beta_k}\|\hat\lambda^{k}-\lambda^k\|^2\notag\\
&\leq&\frac{1}{2}\sum\limits_{i=1}^n
\tau_i^{(k)}\left(\|\x_i^{k}-\x_i^*\|^2-\|\x_i^{k+1}-\x_i^*\|^2\right)+\frac{1}{2\beta_k}\left(\|\lambda^{k}-\lambda^*\|^2-\|\lambda^{k+1}-\lambda^*\|^2\right).\notag
\end{eqnarray}
Dividing both sides by $\beta_k$ and using
$\tau_i^{(k)}-L_i-n\beta_k\|\A_i\|^2\geq
\beta_k(\eta_i-n\|\A_i\|^2)$, the non-decrement of $\beta_k$ and
the non-increment of $\beta_k^{-1}\tau_i^{(k)}$, we have
\begin{eqnarray}
&&\frac{1}{2}\sum\limits_{i=1}^n\left(\eta_i-n\|\A_i\|^2\right)\|\x_i^{k+1}-\x_i^k\|^2+\frac{1}{2\beta_k^2}\|\hat\lambda^{k}-\lambda^k\|^2\label{the_line1}\\
&\leq&\frac{1}{2}\sum\limits_{i=1}^n\left(\beta_k^{-1}\tau_i^{(k)}\|\x_i^{k}-\x_i^*\|^2-\beta_{k+1}^{-1}\tau_i^{(k+1)}\|\x_i^{k+1}-\x_i^*\|^2\right)\notag\\
&&+\left(\frac{1}{2\beta_k^2}\|\lambda^{k}-\lambda^*\|^2-\frac{1}{2\beta_{k+1}^2}\|\lambda^{k+1}-\lambda^*\|^2\right).\label{the_line2}
\end{eqnarray}
It can be easily seen that $(\x_1^k,\cdots,\x_n^k,\lambda^k)$ is bounded, hence has an accumulation point, say
$(\mathbf{x}_1^{k_j},\cdots,\mathbf{x}_n^{k_j},\bflambda^{k_j}) \to
(\mathbf{x}_1^{\infty},\cdots,\mathbf{x}_n^{\infty},\bflambda^{\infty})$.

Summing \eqref{the_line1}-\eqref{the_line2} over
$k=0,\cdots,\infty$, we have
\begin{eqnarray}
&&\frac{1}{2}\sum\limits_{i=1}^n\left(\eta_i-n\|\A_i\|^2\right)\sum\limits_{k=0}^\infty\|\x_i^{k+1}-\x_i^k\|^2+\sum\limits_{k=0}^\infty\frac{1}{2\beta_k^2}\|\hat\lambda^{k}-\lambda^k\|^2\notag\\
&\leq&\frac{1}{2}\sum\limits_{i=1}^n\beta_0^{-1}\tau_i^{(0)}\|\x_i^{0}-\x_i^*\|^2+\frac{1}{2\beta_0^2}\|\lambda^{0}-\lambda^*\|^2.\notag
\end{eqnarray}
So $\|\x_i^{k+1}-\x_i^k\|\rightarrow 0$ and
$\beta_k^{-2}\|\hat\lambda^{k}-\lambda^k\|\rightarrow 0$ as
$k\rightarrow \infty$. Hence
$\lbar\sum\limits_{i=1}^n\A_i(\x_i^{k})-\b\rbar\rightarrow 0$,
which means that $\x_1^\infty,\cdots,\x_n^\infty$ is a feasible
solution.

From (\ref{pop1_line1})-(\ref{pop1_line_end}), we have
\begin{eqnarray}
&&\sum\limits_{i=1}^n \left(f_i(\x_i^{k_j+1})-f_i(\x_i)+\<\A_i^{\dag} (\hat \lambda^{k_j}),\x_i^{k_j+1}-\x_i\>\right)-\<\sum\limits_{i=1}^n \A_i(\x_i^{k_j+1})-\b,\hat\lambda^{k_j}-\lambda\>\notag\\
&\leq&\frac{1}{2}\sum\limits_{i=1}^n \tau_i^{(k_j)}\left(\|\x_i^{k_j}-\x_i\|^2-\|\x_i^{k_j+1}-\x_i\|^2\right)\notag\\
&&-\frac{1}{2}\sum\limits_{i=1}^n\left(\tau_i^{(k_j)}-L_i-n\beta_{k_j}\|\A_i\|^2\right)\|\x_i^{k_j+1}-\x_i^{k_j}\|^2\notag\\
&&+\frac{1}{2\beta_{k_j}}\left(\|\lambda^{k_j}-\lambda\|^2-\|\lambda^{k_j+1}-\lambda\|^2-\|\hat\lambda^{k_j}-\lambda^{k_j}\|^2\right).\notag
\end{eqnarray}
Let $j\rightarrow \infty$. By the boundedness of $\tau_i^{(k_j)}$
we have
$$
\sum\limits_{i=1}^n \left(f_i(\x_i^\infty)-f_i(\x_i)+\<\A_i^{\dag} (\lambda^\infty),\x_i^\infty-\x_i\>\right)\leq 0,\quad \forall \x_i.
$$
Together with the feasibility of $(\x_1^\infty,\cdots,\x_n^\infty)$, we can see that $(\x_1^\infty,\cdots,\x_n^\infty,\lambda^\infty)$ is a KKT point.

By choosing
$(\mathbf{x}_1^*,\cdots,\mathbf{x}_n^*,\bflambda^*)=(\mathbf{x}_1^{\infty},\cdots,\mathbf{x}_n^{\infty},\bflambda^{\infty})$
we have
$$
\sum\limits_{i=1}^n\eta_i\|\x_i^{k_j}-\x_i^{\infty}\|^2
+\frac{1}{\beta_{k_j}^2}\|\bflambda^{k_j}-\bflambda^{\infty}\|^2 \to 0.
$$
Using (\ref{the_line1})-(\ref{the_line2}), we have
$$
\sum\limits_{i=1}^n\eta_i\|\x_i^{k}-\x_i^{\infty}\|^2
+\frac{1}{\beta_{k}^2}\|\bflambda^{k}-\bflambda^{\infty}\|^2 \to 0. $$
So
$(\x_1^k,\cdots,\x_n^k,\bflambda^k) \to
(\x_1^{\infty},\cdots,\x_n^{\infty},\bflambda^{\infty})$.
\end{proof}

\section{Proof of Theorem~\ref{thm:general_convergence_unbounded_rate}}
\begin{proof}{\bf (of Theorem~\ref{thm:general_convergence_unbounded_rate})}
By the definition of $\alpha$ and $\tau_i^{(k)}$,
\begin{eqnarray}
&&\dfrac{1}{2}\left[\sum\limits_{i=1}^n\left(\tau_i^{(k)}-L_i-n\beta_k\|\A_i\|^2\right)\|\x_i^{k+1}-\x_i^k\|^2+\frac{1}{\beta_k}\|\hat\lambda^{k}-\lambda^k\|^2\right]\hspace*{2cm}\\
&\geq&\dfrac{\beta_k}{2}\left[\sum\limits_{i=1}^n\left(\eta_i-n\|\A_i\|^2\right)\|\x_i^{k+1}-\x_i^k\|^2+\frac{1}{\beta_k^2}\|\hat\lambda^{k}-\lambda^k\|^2\right]\notag\\
&\geq&\dfrac{\alpha\beta_k}{2}(n+1)\left(\sum\limits_{i=1}^n\|\A_i\|^2\|\x_i^{k+1}-\x_i^k\|^2+\frac{1}{\beta_k^2}\|\hat\lambda^{k}-\lambda^k\|^2\right)\notag\\
&\geq&\dfrac{\alpha\beta_k}{2}(n+1)\left(\sum\limits_{i=1}^n\|\A_i(\x_i^{k+1}-\x_i^k)\|^2+\frac{1}{\beta_k^2}\|\hat\lambda^{k}-\lambda^k\|^2\right)\notag\\
&=&\dfrac{\alpha\beta_k}{2}(n+1)\left(\sum\limits_{i=1}^n\|\A_i(\x_i^{k+1}-\x_i^k)\|^2+\lbar\sum\limits_{i=1}^n\A_i(\x_i^k)-\b\rbar^2\right)\notag\\
&\geq&\dfrac{\alpha\beta_k}{2}\lbar\sum\limits_{i=1}^n\A_i(\x_i^{k+1})-\b\rbar^2.
\end{eqnarray}
So by (\ref{line_1})-(\ref{line_end}) and the non-decrement of
$\beta_k$, we have
\begin{eqnarray}
&&\sum\limits_{i=1}^n \left(f_i(\x_i^{k+1})-f_i(\x_i^*)+\<\A_i^{\dag} (\lambda^*),\x_i^{k+1}-\x_i^*\>\right)+\frac{\alpha\beta_0}{2}\lbar\sum\limits_{i=1}^n\A_i(\x_i^{k+1})-\b\rbar^2\hspace*{2cm}\\
&\leq& \sum\limits_{i=1}^n
\left(f_i(\x_i^{k+1})-f_i(\x_i^*)+\<\A_i^{\dag}
(\lambda^*),\x_i^{k+1}-\x_i^*\>\right)+\frac{\alpha\beta_k}{2}\lbar\sum\limits_{i=1}^n\A_i(\x_i^{k+1})-\b\rbar^2\hspace*{2cm}\\
&\leq&
\frac{1}{2}\sum\limits_{i=1}^n\tau_i^{(k)}\left(\|\x_i^{k}-\x_i^*\|^2-\|\x_i^{k+1}-\x_i^*\|^2\right)+\frac{1}{2\beta_k}\left(\|\lambda^{k}-\lambda^*\|^2-\|\lambda^{k+1}-\lambda^*\|^2\right).
\end{eqnarray}
Dividing both sides by $\beta_k$ and using the non-decrement of
$\beta_k$ and the non-increment of $\beta_k^{-1}\tau_i^{(k)}$, we
have
\begin{eqnarray}
&&\frac{1}{\beta_k}\left[\sum\limits_{i=1}^n \left(f_i(\x_i^{k+1})-f_i(\x_i^*)+\<\A_i^{\dag} (\lambda^*),\x_i^{k+1}-\x_i^*\>\right)+\frac{\alpha\beta_0}{2}\lbar\sum\limits_{i=1}^n\A_i(\x_i^{k+1})-\b\rbar^2\right]\hspace*{1cm}\\
&\leq& \frac{1}{2}\sum\limits_{i=1}^n \beta_k^{-1}\tau_i^{(k)}\left(\|\x_i^{k}-\x_i^*\|^2-\|\x_i^{k+1}-\x_i^*\|^2\right)+\frac{1}{2\beta_k^2}\left(\|\lambda^{k}-\lambda^*\|^2-\|\lambda^{k+1}-\lambda^*\|^2\right)\notag\\
&\leq&\frac{1}{2}\sum\limits_{i=1}^n
\left(\beta_k^{-1}\tau_i^{(k)}\|\x_i^{k}-\x_i^*\|^2-\beta_{k+1}^{-1}\tau_i^{(k+1)}\|\x_i^{k+1}-\x_i^*\|^2\right)\notag\\
&&+\left(\frac{1}{2\beta_k^2}\|\lambda^{k}-\lambda^*\|^2-\frac{1}{2\beta_{k+1}^2}\|\lambda^{k+1}-\lambda^*\|^2\right).
\end{eqnarray}
Summing over $k=0,\cdots,K$ and dividing both sides by
$\sum\limits_{k=0}^K \beta_k^{-1}$, we have
\begin{eqnarray}
&&\sum\limits_{i=1}^n \left(\sum\limits_{k=0}^K\gamma^k f_i(\x_i^{k+1})-f_i(\x_i^*)+\<\A_i^{\dag} (\lambda^*),\sum\limits_{k=0}^K\gamma^k\x_i^{k+1}-\x_i^*\>\right)\hspace*{2cm}\\
&&+\frac{\alpha\beta_0}{2}\sum\limits_{k=0}^K\gamma^k\lbar\sum\limits_{i=1}^n\A_i(\x_i^{k+1})-\b\rbar^2\\
&\leq&\left(\sum\limits_{i=1}^n
\beta_0^{-1}\tau_i^{(0)}\|\x_i^{0}-\x_i^*\|^2+\beta_0^{-2}\|\lambda^{0}-\lambda^*\|^2\right)/\sum\limits_{k=0}^K
2\beta_k^{-1}.
\end{eqnarray}
Using the convexity of $f_i$ and $\|\cdot\|^2$, we have
\begin{eqnarray}
&&\sum\limits_{i=1}^n \left(f_i(\bar\x_i^{K})-f_i(\x_i^*)+\<\A_i^{\dag} (\lambda^*),\bar\x_i^{K}-\x_i^*\>\right)+\frac{\alpha\beta_0}{2}\lbar\sum\limits_{i=1}^n\A_i(\bar\x_i^{K})-\b\rbar^2\\
&\leq&\sum\limits_{i=1}^n \left(\sum\limits_{k=0}^K\gamma^k f_i(\x_i^{k+1})-f_i(\x_i^*)+\<\A_i^{\dag} (\lambda^*),\sum\limits_{k=0}^K\gamma^k\x_i^{k+1}-\x_i^*\>\right)\\
&&+\frac{\alpha\beta_0}{2}\sum\limits_{k=0}^K\gamma^k\lbar\sum\limits_{i=1}^n\A_i(\x_i^{k+1})-\b\rbar^2.
\end{eqnarray}
So we have
\begin{eqnarray}
&&\sum\limits_{i=1}^n \left(f_i(\bar\x_i^{K})-f_i(\x_i^*)+\<\A_i^{\dag} (\lambda^*),\bar\x_i^{K}-\x_i^*\>\right)+\frac{\alpha\beta_0}{2}\lbar\sum\limits_{i=1}^n\A_i(\bar\x_i^{K})-\b\rbar^2\\
&\leq&\left(\sum\limits_{i=1}^n
\beta_0^{-1}\tau_i^{(0)}\|\x_i^{0}-\x_i^*\|^2+\beta_0^{-2}\|\lambda^{0}-\lambda^*\|^2\right)/\sum\limits_{k=0}^K
2\beta_k^{-1}.
\end{eqnarray}
\end{proof}

\bibliographystyle{spbasic}      

%
\end{document}